\documentclass[oneside, a4paper]{amsart}
\usepackage{amsmath}
\usepackage{amssymb}
\usepackage{amsthm}
\usepackage{graphicx}
\usepackage{caption}
\usepackage{tikz}
\usepackage{amsmath,amssymb,amsfonts}
\usepackage{mathtools}
\usepackage{enumerate}
\usepackage{enumitem}
\usepackage{doi}
\usepackage{faktor}
\usepackage[font=small,labelfont=bf]{caption}
\usepackage{eucal}
\usepackage{mathrsfs}
\usepackage{stmaryrd}
\usepackage{verbatim}
\usepackage{manfnt}
\usepackage{array}
\usetikzlibrary{patterns,cd}
\usepackage{layout} 
\usepackage[all]{xy}
\usepackage{faktor}

\numberwithin{equation}{section}

\begingroup
\catcode`\&=13
\gdef\pampmatrix{%
  \begingroup
  \let&=\amsamp
  \begin{pmatrix}%
}
\gdef\endpampmatrix{\end{pmatrix}\endgroup}
\endgroup

\newcommand{\HH }{\mathrm{HH}}
\newcommand{\Set }{\mathrm{Set}}
\newcommand{\im}{\textnormal{im}}

\newcommand{\Z}{\mathbb{Z}}
\newcommand{\C}{\mathbb{C}}
\newcommand{\N}{\mathbb{N}}
\newcommand{\Q}{\mathbb{Q}}
\newcommand{\Hom}{\operatorname{Hom}}

\newcommand{\co}{\colon\thinspace}
\newcommand{\Fin}{\mathrm{Fin}}
\newcommand{\Topp}{\mathrm{Top}}
\newcommand{\dgca}{\mathrm{dgca}}
\newcommand{\sset}{\mathrm{sSet}}

\theoremstyle{plain}

\newtheorem{theorem}{Theorem}[section]
\newtheorem*{theorem*}{Theorem}
\newtheorem{proposition}[theorem]{Proposition}
\newtheorem{lemma}[theorem]{Lemma}
\newtheorem{addendum}[theorem]{Addendum}
\newtheorem{corollary}[theorem]{Corollary}

\theoremstyle{definition}
\newtheorem{definition}[theorem]{Definition}
\newtheorem{example}[theorem]{Example}
\newtheorem{construction}[theorem]{Construction}

\theoremstyle{remark}
\newtheorem{remark}[theorem]{Remark}

\usepackage[all]{xy}
\SelectTips{cm}{10}
\CompileMatrices

\title[The equivariant cohomology ring of $\Hom(\Z^2,\mathrm{GL}_n(\C))$]{The equivariant cohomology ring of the representation variety $\Hom(\Z^2,\mathrm{GL}_n(\C))$}
\author{Simon Gritschacher}
\address{Mathematisches Institut der Ludwig-Maximilians-Universit{\"a}t  \\ Theresienstrasse 39, 80333, Munich, Germany \\ Email: \url{simon.gritschacher@math.lmu.de}}
\date{\today}

\begin{document}

\maketitle

\vspace{-10pt}

\begin{abstract}
We give a presentation of the $\mathrm{GL}_n(\C)$-equivariant cohomology ring with $\Z$-coefficients of the variety $\Hom(\Z^2,\mathrm{GL}_n(\C))\subseteq \mathrm{GL}_n(\C)^2$ for any $n$. It is torsion free and minimally generated as a $H^\ast B\mathrm{GL}_n(\C)$-algebra by $3n$ elements. The ideal of relations is the saturation of an $n$-generator ideal by even powers of the Vandermonde polynomial. For coefficients in a field whose characteristic does not divide $n!$, we also give a presentation of the non-equivariant cohomology ring of $\Hom(\Z^2,\mathrm{GL}_n(\C))$.
\end{abstract}

\setcounter{tocdepth}{1}
\tableofcontents

\vspace{-10pt}

\section{Introduction}

We will compute the equivariant cohomology ring, with integral coefficients, of the representation variety $\Hom(\Z^2,\mathrm{GL}_n(\C))$. By a result of Pettet and Souto \cite{PS13} this space is homotopy equivalent to the representation space $\Hom(\Z^2,U(n))$. From the viewpoint of geometry, the cohomology we compute may be identified with the cohomology of the moduli stack of flat holomorphic vector bundles over an elliptic curve, or of flat unitary connections on a torus. Moduli spaces of flat connections on compact surfaces have been studied extensively, and determining the ring structure on their cohomology is a central and often difficult problem. For surface groups of genus $g\geq 2$ one often restricts attention to smooth components of the moduli space by imposing constraints on rank and degree (see e.g. the pioneering paper of Atiyah and Bott \cite{AB}, the survey of Jeffery \cite{JeffreySurvey}, or \cite{hausel, jeffreykirwan, kirwan}). In genus $1$, there is only a single component which is singular when $n\geq 2$ due to the presence of reducible connections. The approach we take is by homotopy theoretic methods (Hochschild homology, $K$-theory) rather than differential geometric ones.

A representation $\Z^2\to \mathrm{GL}_n(\C)$ is determined by the images of the generators, and thus $\Hom(\Z^2,\mathrm{GL}_n(\C))$ may be identified with the commuting variety 
\[
C_2(\mathrm{GL}_n(\C))=\{(A,B) \in \mathrm{GL}(n,\C)^2\mid AB=BA\}\, ,
\]
equipped with the $\mathrm{GL}_n(\C)$-action by simultaneous conjugation. Baird \cite{Baird} has proved an appealing formula for the cohomology ring of the space of $m$-tuples of commuting elements in a (compact) Lie group $G$. The description is given in terms of invariants for the action of the Weyl group on a maximal torus. In the present situation this formula implies
\begin{equation} \label{eq:bairdintro}
H^\ast_{\mathrm{GL}_n(\C)}(C_2(\mathrm{GL}_n(\C));\Q) \cong H^\ast(BT\times T^2;\Q)^{S_n}\, ,
\end{equation}
where $T$ is an $n$-dimensional torus and $S_n$ is the symmetric group. However, this description is valid only over coefficient fields of characteristic prime to $|S_n|$, and it provides no information about the structure of the ideal of relations. Using Baird's formula, Kishimoto and Takeda \cite{KT21} identified minimal generating sets for the rational cohomology rings of spaces of commuting elements in various classical groups, but the full ring structure was made explicit only by ad hoc computation in a very limited number of cases; mostly Lie groups of rank $\leq 2$ (see e.g. \cite{Fok, KT21, Takeda}).

In this paper we fully solve both the question of torsion and the determination of relations in the equivariant cohomology ring of $C_2(\mathrm{GL}_n(\C))$. We show in particular that the above formula (\ref{eq:bairdintro}) does not hold integrally, although the equivariant cohomology ring embeds as a proper subring of the corresponding invariant ring.

\subsection{Our main theorem} Let $t_1,\dots,t_n$ be variables of degree two and let $\Lambda_n=\Z[t_1,\dots,t_n]^{S_n}$ denote the ring of symmetric polynomials in $t_1,\dots,t_n$. We identify $\Lambda_n$ with the cohomology ring of $B\mathrm{GL}_n(\C)$; under this identification the Chern classes correspond to the elementary symmetric polynomials.  Let $V=(t_j^{i-1})_{1\leq i,j\leq n}$ be the Vandermonde matrix and $\Delta=\prod_{i < j}(t_j-t_i)$ its determinant. Since $\Delta$ is alternating, its square $\Delta^2$ is a symmetric polynomial. Let $\mathrm{adj}(V)=\Delta V^{-1}\in \mathrm{Mat}_n(\Z[t_1,\dots,t_n])$ be the adjugate of $V$. Let $s$ be another variable. Define the diagonal matrix $D(s)=\mathrm{diag}(s/(1+t_i s))_{1\leq i \leq n}$, and the matrix
\[
M(s)=\mathrm{adj}(V)^T D(s)\mathrm{adj}(V)\,;
\]
this matrix is invariant under permutation of $t_1,\dots,t_n$. Thus, for any two $k$-element subsets $I,J\subseteq \{1,\dots,n\}$ the $(I,J)$-minor of $M(s)$ defines a formal power series
\[
\det(M(s)_{I,J})\in \Lambda_n [[s]]\, .
\]
It turns out that $\det(M(s)_{I,J})$ is divisible by $\Delta^{2k-2}$, and we define the symmetric polynomial $c_{I,J}^l\in \Lambda_n$ as the coefficient of $s^l$ in $\det(M(s)_{I,J})/\Delta^{2k-2}$.

Let $Z_1,\dots,Z_n$ be variables of degree $|Z_i|=2i$, and let $X_1,\dots,X_n,Y_1,\dots,Y_n$ be variables of degree $|X_i|=|Y_i|=2i-1$. We consider the free graded commutative $\Lambda_n$-algebra
\[
A:=\Lambda_n\otimes \Z[Z_1\dots,Z_n]\otimes {\textstyle \bigwedge_\Z}(X_1,\dots,X_n,Y_1,\dots,Y_n)\, .
\]
For a subset $I=\{i_1,\dots,i_k\}\subseteq \{1,\dots,n\}$ with $i_1<\cdots < i_k$ we write $X_I$ for the product $X_{i_1}\cdots X_{i_k}$ in $A$, and similarly $Y_I$ for the product $Y_{i_1}\cdots Y_{i_k}$.

For $1\leq l \leq n$ let $R_l \in A$ denote the element
\begin{equation} \label{eq:idealofrelations}
\Delta^2 Z_l-\sum_{k=1}^l (-1)^{\frac{k(k-1)}{2}}\sum_{\substack{I,J\subseteq \{1,\dots,n\}\\ |I|=|J|=k}} c^l_{I,J} X_I Y_J\, ,
\end{equation}
which is homogeneous of degree $2(n(n-1)+l)$. The elements $R_1,\dots,R_n$ generate an ideal $(R_1,\dots,R_n)\subseteq A$. We consider its saturation with respect to $\Delta^2$; this is the ideal $\mathcal{J}\subseteq A$ defined by $\mathcal{J}=\{a\in A\mid \exists m\geq 0: \Delta^{2m}a\in (R_1,\dots,R_n)\}$.

\begin{theorem} \label{thm:main}
For every $n\geq 0$ there is an isomorphism of $\Lambda_n$-algebras
\[
H^\ast_{\mathrm{GL}_n(\C)}(C_2(\mathrm{GL}_n(\C));\Z) \cong \faktor{A}{\mathcal{J}} \, ,
\]
where $\mathcal{J}$ is the saturation of the ideal $(R_1,\dots,R_n)$ with respect to $\Delta^2$.

Moreover, this algebra cannot be generated by fewer than $3n$ elements.
\end{theorem}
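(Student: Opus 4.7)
My plan is to prove the two assertions of the theorem in sequence. I would first identify $H^\ast_{\mathrm{GL}_n(\C)}(C_2(\mathrm{GL}_n(\C));\Z)$ with $A/\mathcal{J}$ by constructing explicit cohomology classes and comparing them with a maximal-torus model; I would then derive the lower bound of $3n$ generators by reducing modulo the augmentation ideal $\Lambda_n^+\subseteq\Lambda_n$ and invoking a graded Nakayama argument.

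\textbf{Identification of the ring.} The two evaluation maps
\[
\mathrm{ev}_1,\mathrm{ev}_2\co C_2(\mathrm{GL}_n(\C))\longrightarrow \mathrm{GL}_n(\C)
\]
are $\mathrm{GL}_n(\C)$-equivariant for the conjugation action on the target, and since $H^\ast(\mathrm{GL}_n(\C);\Z)\cong\bigwedge(g_1,\dots,g_n)$ with $|g_i|=2i-1$, they yield odd classes $X_i:=\mathrm{ev}_1^\ast g_i$ and $Y_i:=\mathrm{ev}_2^\ast g_i$ in the Borel construction. The even classes $Z_i$ would arise as Chern classes of a natural vector bundle on the Borel construction associated to the universal commuting-pair representation. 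Restriction to the maximal torus $T\subseteq \mathrm{GL}_n(\C)$, using $C_2(T)=T^2$, then gives a ring map
\[
H^\ast_{\mathrm{GL}_n(\C)}(C_2(\mathrm{GL}_n(\C));\Z)\longrightarrow H^\ast(BT\times T^2;\Z)^{S_n}\,,
\]
injective by torsion-freeness together with Baird's rational formula (\ref{eq:bairdintro}). Computing the images of $X_i,Y_i,Z_i$ on the right-hand side makes the relations $R_l$ explicit: applying Cramer's rule to the matrix $M(s)$ yields an identity expressing $\Delta^2 Z_l$ as a signed sum of products $X_I Y_J$ with coefficients $c^l_{I,J}$, which is precisely (\ref{eq:idealofrelations}). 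The integral image is then identified with $A/\mathcal{J}$, the saturation by $\Delta^2$ accounting precisely for those Weyl-invariant classes that lie in the image of $A$ only after multiplication by $\Delta^2$.

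\textbf{Lower bound on generators.} Set $B:=A/\mathcal{J}$ and $\overline{B}:=B/\Lambda_n^+\cdot B$. By a graded Nakayama argument, the minimal number of $\Lambda_n$-algebra generators of $B$ equals the minimal number of $\Z$-algebra generators of $\overline{B}$, which is the rank of $\overline{B}^+/(\overline{B}^+)^2$. Each relation $R_l$ lies in $\Lambda_n^+\cdot A$, since $\Delta^2$ and every coefficient $c^l_{I,J}$ is a polynomial of positive degree in $t_1,\dots,t_n$; hence there is a natural surjection
\[
\Z[Z_1,\dots,Z_n]\otimes {\textstyle \bigwedge_\Z}(X_1,\dots,X_n,Y_1,\dots,Y_n)\twoheadrightarrow \overline{B}
\]
from the free graded-commutative $\Z$-algebra on $3n$ generators. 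To show this surjection is an isomorphism one must rule out that the saturation $\mathcal{J}$ contributes additional indecomposable relations modulo $\Lambda_n^+$. I would do this by combining the Stage 1 identification with direct access to the non-equivariant cohomology $H^\ast(C_2(\mathrm{GL}_n(\C));\Z)$ via the Pettet--Souto homotopy equivalence, so that $\overline{B}$ can be matched against a known free graded-commutative algebra on $3n$ generators.

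\textbf{Main obstacle.} The hardest step is the integral identification in Stage 1: Baird's formula (\ref{eq:bairdintro}) is only rational, and extracting the saturation-by-$\Delta^2$ description of $\mathcal{J}$ requires understanding exactly which integral Weyl-invariant classes in $H^\ast(BT\times T^2;\Z)^{S_n}$ lift to $A$ versus those which only lift after multiplying by $\Delta^2$. A related difficulty, crucial for the lower bound, is controlling the contribution of the saturation modulo $\Lambda_n^+$. I would expect the homotopy-theoretic tools (Hochschild homology and $K$-theory) alluded to in the introduction to be essential for bridging the rational and integral pictures.
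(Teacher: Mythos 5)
Your Stage 1 has the right overall shape (embed into $H^\ast(BT\times T^2;\Z)^{S_n}$, compute images of generators, identify the kernel with the $\Delta^2$-saturation), but it assumes the two hardest inputs rather than proving them. First, the injectivity of the restriction to $BT\times T^2$ rests on integral torsion-freeness of $H^\ast_{U(n)}(C_2(U(n));\Z)$, which is itself a main result (Corollary \ref{cor:equivariantoverz}); the paper obtains it from a spectral sequence whose $E^2$-page is iterated Hochschild homology of $H_\ast(\coprod_n BU(n))$, degenerating because it is torsion free and degenerates rationally. Second, and more seriously, you never argue that the classes $X_i,Y_i,Z_i$ you construct actually \emph{generate} the equivariant cohomology ring over $\Lambda_n$; writing down classes and computing their restrictions only identifies a subring of the image of $f_n^\ast$. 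The paper's mechanism for generation is the group completion theorem: $H_\ast(\mathscr{C}_2)$ is a free $\Z[x_0]$-module (again via the spectral sequence), hence $H^\ast(C_2(U)_{hU})\to H^\ast(C_2(U(n))_{hU(n)})$ is surjective, and $C_2(U)_{hU}\simeq BU^2\times U^2$ supplies the generators. Without some substitute for this step the identification of the image with $A/\mathcal{J}$ is unsupported.

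The minimality argument as proposed would fail. The surjection from the free graded-commutative algebra on $3n$ generators onto $\overline{B}=B/\Lambda_n^+B$ is very far from an isomorphism: $\overline{B}$ is the non-equivariant cohomology, which is finite-dimensional rationally (e.g.\ $\bar Z_i$ restricts to the nilpotent element $e_i(u_1v_1,\dots,u_nv_n)$), so there is no hope of proving the isomorphism you aim for. Moreover the criterion ``rank of $\overline{B}^+/(\overline{B}^+)^2$'' gives the wrong answer: by Lemma \ref{lem:newtonidentity}, $nZ_n$ is decomposable modulo $\Lambda_n^+$, so the \emph{free rank} of the indecomposables is only $3n-1$ (this is exactly why Addendum \ref{add:minimal} gives $3n-1$ generators after inverting $n$). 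The integral bound $3n$ is a torsion phenomenon: one must show $Z_n$ itself is indecomposable over $\Z$, which the paper does by specialising $t_i\mapsto 0$ and identifying the even part with a truncated divided power algebra $\Gamma_\Z[\bar Z_1]/(\gamma_{n+1}(\bar Z_1),\dots)$, which needs $n$ generators because of the binomial coefficients $\bar Z_i\bar Z_j=\binom{i+j}{i}\bar Z_{i+j}$. A separate argument (a Vandermonde/dimension count after base change to $\Q(t_1,\dots,t_n)$) is needed for the $2n$ odd generators. Neither of these appears in your proposal.
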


Our proof will show that the generators are pulled back from the direct limit $C_2(\mathrm{GL}_\infty(\C))=\mathrm{colim}_n C_2(\mathrm{GL}_n(\C))$ and can be interpreted in terms of Chern classes (see Remark \ref{rem:kishimototakeda}). A few immediate consequences of Theorem \ref{thm:main} are:

\begin{enumerate}
\item[(1)] After inverting $\Delta^2$, the equivariant cohomology of $C_2(\mathrm{GL}_n(\C))$ is a free $\Lambda_n[\Delta^{-2}]$-module of total rank $2^{2n}$; in fact, isomorphic to
\[
\Lambda_n[\Delta^{-2}]\otimes {\textstyle \bigwedge_{\Z}}(X_1,\dots,X_n,Y_1,\dots,Y_n)\,.
\]
\item[(2)] The equivariant cohomology of $C_2(\mathrm{GL}_n(\C))$ is $\Lambda_n$-torsionfree, in particular torsionfree as an abelian group. This follows from the previous item and the fact that the ideal $\mathcal{J}$ is $\Delta^2$-saturated, so there is no $\Delta^2$-torsion.
\item[(3)] Let $T$ be a maximal torus of the unitary group $U(n)\subseteq \mathrm{GL}_n(\C)$. Then the $T$-equivariant cohomology of $C_2(\mathrm{GL}_n(\C))$ is $H^\ast(BT;\Z)$-torsionfree. This follows from the previous bullet point and the fact that $\Lambda_n$ is a free $H^\ast(BT;\Z)$-module.
\end{enumerate}

The non-equivariant cohomology of $\Hom(\Z^2,\mathrm{GL}_n(\C))$ has intricate torsion, and the map from the equivariant to the non-equivariant cohomology is not surjective in general. This is reflected in the fact that the equivariant cohomology is not a free $\Lambda_n$-module with $\Z$-coefficients, although it is free with $\Q$-coefficients.  A central motivation for this work is to develop an approach to understanding this torsion; for example, through the Eilenberg-Moore spectral sequence which requires knowledge of the equivariant cohomology as a $\Lambda_n$-module.

It is natural to ask to what extent our results generalise beyond $\mathrm{GL}_n(\C)$. While we do not know whether the equivariant cohomology of $\Hom(\Z^2,G)$ is torsionfree for arbitrary $G$, the algebraic structure of the cohomology ring does generalise in the non-modular case. In this setting our results can be interpreted as an explicit presentation of the invariant algebra appearing on the right hand side of (\ref{eq:bairdintro}). The Vandermonde determinant $\Delta$ identifies with the $T$-equivariant Euler class of $U(n)/T$; for other compact Lie groups $G$ this role is played by the Euler class of $G/T$. The matrices encoding the relations are replaced by higher order Hessians of the basic invariants in $H^\ast(BT)^W$. These generalisations will be treated elsewhere.

\subsection{Outline of the paper} We briefly outline the proof strategy and the structure of the paper. Since $C_2(\mathrm{GL}_n(\C))$ deformation retracts onto the space of commuting pairs in the unitary group $U(n)\subseteq \mathrm{GL}_n(\C)$, we will compute the $U(n)$-equivariant cohomology ring of $C_2(U(n))=\{(A,B)\in U(n)^2\mid AB=BA\}$.

In Section \ref{sec:notorsion} we set up a spectral sequence which starts from iterated Hochschild homology of a polynomial algebra and abuts on the equivariant homology of $C_2(U(n))$ (Proposition \ref{prop:ss}). 

The spectral sequence is used to show that the inclusion of a maximal torus $T\hookrightarrow U(n)$ embeds the equivariant cohomology of $C_2(U(n))$ into the free graded commutative algebra $H^\ast(BT\times T^2;\Z)$ (Corollary \ref{cor:embedding}). 

In Section \ref{sec:generators} the spectral sequence is used to show that the map into the direct limit $C_2(U(n))_{hU(n)} \to C_2(U)_{hU}$ is surjective in cohomology. This gives us a convenient set of generators, since homotopically $C_2(U)_{hU}$ is just the product $BU^2\times U^2$ (Corollary \ref{cor:gcsurjective}).

In Proposition \ref{prop:3ngenerators} we prove that $3n$ generators are enough. Their images under the embedding into a free graded commutative algebra are computed in (\ref{eq:explicitformxy}) and (\ref{eq:explicitformz}). At this point we have a description of the equivariant cohomology ring as an explicit subalgebra of a free graded commutative algebra.

In Section \ref{sec:ideal} formal algebraic manipulations are made to identify the ideal of relations $\mathcal{J}$ of Theorem \ref{thm:main} (Lemma \ref{lem:ideal}).

In Section \ref{sec:minimality} we finish the proof of the main theorem, by showing that the exhibited generating set is minimal. In Addendum \ref{add:minimal} we show that after inverting $n$, the size of a minimal generating set is $3n-1$ instead of $3n$.

In Section \ref{sec:inverting} we determine a presentation of the non-equivariant cohomology ring of $C_2(\mathrm{GL}_n(\C))$ with coefficients in a field of characteristic zero or prime to $n!$ (Theorem \ref{thm:main2}). An algorithm which computes an additive basis of the ideal of relations is given in Corollary \ref{cor:main2}.

In Section \ref{sec:ab} we give a gauge theoretic interpretation of our result, by commenting on the surjectivity of the natural map, considered by Atiyah--Bott \cite{AB}, from the cohomology of the classifying space of the gauge group to the equivariant cohomology of $C_2(U(n))$.

There is an Appendix \ref{sec:appendix} which explains some details about the spectral sequence we use.

\section{A spectral sequence and the absence of torsion} \label{sec:notorsion}

\subsection{Preliminaries}
Our aim is to compute the equivariant cohomolgy ring of $C_2(\mathrm{GL}_n(\C))$. A few intermediate results will be proved for the more general spaces $C_m(\mathrm{GL}_n(\C))$, the space of $m$-tuples of pairwise commuting elements in $\mathrm{GL}_n(\C)$. Clearly, $\mathrm{GL}_n(\C)$ deformation retracts onto the unitary group $U(n)$, which is its maximal compact subgroup. What is far less evident -- and a nontrivial theorem of Pettet--Souto \cite{PS13} -- is that the same is true for the commuting varieties: $C_m(\mathrm{GL}_n(\C))$ deformation retracts onto $C_m(U(n))$ for all $m$. The spaces $C_m(U(n))$ are more tractable than the general linear counterparts, which is why all of our computations will be done for $C_m(U(n))$.

For convenience, we recall the definition:
\[
C_m(U(n))=\{(A_1,\dots,A_m)\in U(n)^m \mid \forall i,j : A_iA_j=A_jA_i\}\, .
\]
We let $U(m)$ act on $C_n(U(m))$ by simultaneous conjugation and denote by
\[
C_m(U(n))_{hU(n)}=EU(n)\times_{U(n)} C_m(U(n))
\]
the homotopy orbit space; by definition, its cohomology is the $U(n)$-equivariant cohomology of $C_m(U(n))$. We note that the homotopy equivalence between $C_m(U(n))$ and $C_m(\mathrm{GL}_n(\C))$ carries over to the homotopy orbits, that is
\[
C_m(U(n))_{hU(n)} \simeq C_m(\mathrm{GL}_n(\C))_{h\mathrm{GL}_n(\C)}\, .
\]
Therefore, the corresponding equivariant cohomology rings are isomorphic, and all of our computations apply equally to the unitary and the complex case. Our main results will be stated for $C_m(\mathrm{GL}_n(\C))$, because this space is of more interest in algebraic geometry, but this is really a matter of taste.

An important perspective is to consider the spaces $C_m(U(n))$ simultaneously for all $n$, as this reveals some additional structure which is otherwise hidden. We define
\begin{equation}
\label{eq:cm}
\mathscr{C}_m=\bigsqcup_{n\geq 0} C_m(U(n))_{hU(n)}\,.
\end{equation}
One can view $\mathscr{C}_m$ as the classifying space of a topological symmetric monoidal category (namely, finite dimensional unitary representations of $\Z^m$ under direct sum); it is therefore an $\mathbb{E}_\infty$-space, that is, a coherently homotopy commutative $H$-space. We will use this structure in a couple of places without much further explanation; a good reference is \cite{Lawson} or \cite{RModuli}.

Another fact that we will use sometimes is that $C_m(U(n))$ is path-connected for all $m,n\geq 0$. This follows because in the unitary groups any abelian subgroup is contained in a maximal torus.

Our first goal is to set up a spectral sequence which starts form iterated Hochschild homology of a polynomial algebra and converges to the homology of $\mathscr{C}_m$.

\subsection{Loday construction} \label{sec:loday}

We recall the Loday construction and higher Hochschild homology, see e.g. \cite[Section 1.7]{Pirashvili}. Let $k$ be a commutative ring and let $\dgca_k$ be the category of differential graded commutative algebras over $k$. Let $\Fin, \Set$ and $\Fin_\ast, \Set_\ast$ be the categories of (finite) sets and (finite) pointed sets, respectively. The basepoint of a based set $S\in \Set_\ast$ will be denoted by $\ast\in S$; the finite based set $\{\ast,1,\dots,n\}$ will be denoted by $\langle n\rangle$. The one-point set (or space) will also be denoted by $\ast$.

\begin{construction}[e.g. \cite{Pirashvili}]
Fix $A\in \dgca_k$. Since $\dgca_k$ has all colimits and the category $\Set$ is generated under colimits by $\ast$, we can make the following construction:
\begin{enumerate}
\item[(1)] We let $\mathcal{L}(A)\co \Set \to \dgca_k$ be the unique (up to natural isomorphism) colimit preserving functor satisfying $\mathcal{L}(A)(\ast)=A$.
\item[(2)] If $S\in \Set_\ast$, then $\mathcal{L}(A)(S)$ is canonically a dgca under $A$ (i.e., equipped with a canonical map $A\to \mathcal{L}(A)(S)$). Suppose that $A$ is augmented over $k$. Then we define $\mathcal{L}(A;k)\co \Set_\ast \to \dgca_k$ by $\mathcal{L}(A;k)(S)=k\otimes_A \mathcal{L}(A)(S)$.
\end{enumerate}
\end{construction}

Concretely, for $\{1,\dots,n\} \in \Fin$
\[
\mathcal{L}(A)(\{1,\dots,n\})\cong A^{\otimes n}
\]
is the choice of an $n$-fold tensor product (over $k$) of $A$ with itself. For $S\in \mathrm{Set}$
\[
\mathcal{L}(A)(S)\cong \mathrm{colim}_{\substack{U\subseteq S\\ \textnormal{finite}}}\, \mathcal{L}(A)(U)\,,
\]
where the colimit runs over all finite subsets of $S$. If $S$ is a pointed set and $A$ is augmented over $k$, then $\mathcal{L}(A;k)(S)$ is the pushout in $\dgca_k$ of the diagram
\[
\xymatrix{
A\ar[r] \ar[d] & \mathcal{L}(A)(S) \\
k
}
\]
It is again augmented over $k$, since $\mathcal{L}(A)(S)$ is always augmented over $A$ (via the unique map $S\to \ast$).

Let $\sset$ (respectively, $\sset_\ast$) denote the category of (based) simplicial sets. For $X\in \sset$, evaluating $\mathcal{L}(A)$ on $X$ levelwise gives a simplicial dgca
\[
\mathcal{L}(A)(X):=\{[q]\mapsto \mathcal{L}(A)(X_q)\}\, .
\]
This simplicial dgca can be viewed as a bidifferential bigraded $k$-module
\[
(\mathcal{L}(A)(X),d_h,d_v)
\]
with a ``horizontal" differential $d_h\co \mathcal{L}(A)(X_q)_p\to \mathcal{L}(A)(X_q)_{p-1}$ coming from $A$, and a ``vertical" differential $d_v\co \mathcal{L}(A)(X_q)_p\to \mathcal{L}(A)(X_{q-1})_p$ given as usual by the alternating sum of the face maps of the simplicial dgca. We denote the total complex of this bicomplex by
\[
|\mathcal{L}(A)(X)|=\mathrm{Tot}^{\oplus}(\mathcal{L}(A)(X))\, .
\]
For $X\in \sset_\ast$, $|\mathcal{L}(A;k)(X)|$ is defined analogously.

\begin{example} \label{ex:ordinaryhhcomplex} Let $A$ be a graded commutative $k$-algebra (with trivial differential). Let $S^1$ be the standard simplicial model of the circle with only two non-degenerate simplices in total. Then $|\mathcal{L}(A)(S^1)|$ is the usual Hochschild complex \cite[§ 1.1.1]{Lo92}.  In (simplicial) degree $n$ it is the tensor product $A^{\otimes (n+1)}$ and the differential $d\co A^{\otimes (n+1)}\to A^{\otimes n}$ is given by
\begin{equation*}
\begin{split}
d(a_0\otimes \cdots \otimes a_n) & =(-1)^{n+(|a_0|+\cdots +|a_{n-1}|) |a_n|)} a_n a_0 \otimes \cdots \otimes a_{n-1}\\&\quad +\sum_{i=0}^{n-1} (-1)^i a_0\otimes \cdots \otimes a_{i}a_{i+1}\otimes \cdots \otimes a_n\, .
\end{split}
\end{equation*}
\end{example}

\begin{definition} \label{def:hh}
Let $X\in \sset$ and $A$ a dgca over a commutative ring $k$.
\begin{enumerate}
\item[(1)] The (underived) \emph{Hochschild homology of $A$ over $X$} is defined by
\[
\HH^X_\ast(A)=H_\ast(|\mathcal{L}(A)(X)|)\,.
\]
\item[(2)] If $X$ is based and $A$ is augmented over $k$, then the (underived) Hochschild homology \emph{with coefficients in the $A$-module $k$} is defined by
\[
\HH_\ast^X(A;k)=H_\ast(|\mathcal{L}(A;k)(X)|)\,.
\]
\end{enumerate}
\end{definition}

Let $X\in \sset$ and denote by $X_+$ the simplicial set $X$ with a disjoint basepoint added. Then, if $A$ is augmented over $k$, there is a natural isomorphism
\begin{equation} \label{eq:lodayextrabasepoint}
\mathcal{L}(A;k)(X_+) \cong \mathcal{L}(A)(X)\, ;
\end{equation}
this follows from the definition of $\mathcal{L}(A;k)(X_+)$ and the isomorphism
\[
\mathcal{L}(A)(X_+)\cong A\otimes \mathcal{L}(A)(X)
\]
under $A$ (recall that $\mathcal{L}(A)$ preserves colimits and $X_+=X\sqcup \ast$).

Both $|\mathcal{L}(A)(X)|$ and $|\mathcal{L}(A;k)(X)|$ are again dgcas over $k$ under the \emph{shuffle product} (see \cite[\S 4.2]{Lo92} in the classical context; Definition \ref{def:shuffle} for the augmented case; the shuffle product for $\mathcal{L}(A)(X)$ is defined analogously). Thus, the shuffle product gives $\HH_\ast^X(A)$ (and $\HH_\ast^X(A;k)$ in the based/augmented case) the structure of a graded commutative $k$-algebra. 

\begin{example} \label{ex:classicalhh}
Let $k$ be a commutative ring. For simplicity, assume that $2\neq 0$ in $k$, so that, if $y$ is an odd degree variable, the exterior algebra ${\textstyle \bigwedge_k(y)=k[y]/(y^2)}$ is a free graded commutative algebra. Let $x$ be a generator of even degree and $\sigma x$ a generator of degree $|\sigma x|=|x|+1$. Define a map of graded commutative $k$-algebras
\begin{equation} \label{eq:hhpolynomial}
k[x]\otimes {\textstyle \bigwedge_{k}}(\sigma x) \to |\mathcal{L}(k[x])(S^1)|
\end{equation}
by sending $x\mapsto x$ (in simplicial degree $0$) and $\sigma x\mapsto 1\otimes x$ (in simplicial degree $1$). It is easily checked (see Example \ref{ex:ordinaryhhcomplex}) that $1\otimes x$ is a cycle, and thus (\ref{eq:hhpolynomial}) is a map of dgca's, if the algebra appearing on the left hand side is given the zero differential. It is a classical fact (for example, a consequence of the HKR-Theorem \cite[Theorem 3.4.4]{Lo92}) that this map is an isomorphism on homology. This shows not only that
\[
\HH_\ast^{S^1}(k[x]) \cong k[x]\otimes {\textstyle \bigwedge_{k}}(\sigma x)
\]
as graded algebras, but it also shows that the Loday construction $\mathcal{L}(k[x])(S^1)$ is formal (cf. \cite[Proposition 2.1]{BLPRZ15}). This will be important in Corollary \ref{cor:hhiterated} below.
\end{example}

\begin{example} \label{ex:classicalhh2}
Now suppose that $x$ is a generator of odd degree, and $\sigma x$ a generator of degree $|\sigma x|=|x|+1$. Then there is an isomorphism of graded algebras
\[
\HH_\ast^{S^1}({\textstyle \bigwedge_k}(x))\cong {\textstyle \bigwedge_k}(x)\otimes \Gamma_k[\sigma x]\,,
\]
where $\Gamma_k[\sigma x]$ is a divided polynomial algebra. The computation goes as follows: Let $A=\bigwedge_k(x)$. The Hochschild complex $|\mathcal{L}(A)(S^1)|$  is quasi-isomorphic to the normalised Hochschild complex (see \cite[\S 1.1.14]{Lo92}) which has
\[
A\otimes \bar{A}^{\otimes n} \cong k\{1\otimes x^{\otimes n}, x^{\otimes (n+1)}\}
\]
in degree $n$; here $\bar{A}=(x) \subseteq A$. We have that $d(x^{\otimes (n+1)})=0$, because $x^2=0$, and $d(1\otimes x^{\otimes n})=(-1)^{n+(n-1)|x|^2}+1=0$, because $|x|$ is odd. It follows that the map $\bigwedge_{k}(x) \otimes \Gamma_k[\sigma x] \to \HH_\ast^{S^1}(A)$ which sends $x\mapsto x \in A\otimes \bar{A}^{\otimes 0}=A$ and $\gamma_m(x)\mapsto 1\otimes x^{\otimes m} \in A\otimes \bar{A}^{\otimes m}$ is an isomorphism. By inspection, it is multiplicative with respect to the shuffle product on the normalised Hochschild complex. In fact, since the shuffle product descends to the normalised Hochschild chains, the computation gives a zig-zag of quasi-isomorphisms of dgca's between $|\mathcal{L}(\bigwedge_k(x))(S^1)|$, the normalised Hochschild complex and $\bigwedge_k(x) \otimes \Gamma_k[\sigma x]$ (cf. \cite[Proposition  2.3]{BLPRZ15}).
\end{example}

In the following, we let $S^1$ denote the standard simplicial circle and $(S^1)^m$ the product simplicial set. Under suitable flatness hypotheses on $A$, the Loday construction $\mathcal{L}(A)(X)$ depends, up to quasi-isomorphism, only on the weak homotopy type of $X$; we will not have to appeal to this fact.

We shall use, however, the fact that a quasi-isomorphism $A\to B$ of bounded below, degreewise flat dgca's induces a quasi-isomorphism $\mathcal{L}(A)(X) \to \mathcal{L}(B)(X)$. This follows, because tensoring of flat dgca's preserves quasi-isomorphisms, and a levelwise quasi-isomorphism of simplicial dgca's induces a quasi-isomorphism on total complexes (by arguing with the standard spectral sequence for the homology of a total complex; we use boundedness for the convergence of the spectral sequence).

We will also use the natural isomorphism
\begin{equation} \label{eq:lodayoftensorproduct}
\mathcal{L}(A\otimes B)(X) \cong \mathcal{L}(A)(X) \otimes \mathcal{L}(B)(X)
\end{equation}
for any two dgca's $A$ and $B$ and any simplicial set $X$; to see this it suffices to check it on finite pointed sets, where it follows directly from the construction.

\begin{corollary} \label{cor:hhiterated}
Let $x$ be a variable of even degree. Then, there is an isomorphism
\[
\HH_\ast^{(S^1)^2}(\Z[x]) \cong \Z[x] \otimes {\textstyle \bigwedge_\Z}(\sigma x,\tau x) \otimes \Gamma_{\Z}[\rho x]\,,
\]
where the degrees are $|\sigma x|=|\tau x|=|x|+1$ and $|\rho x|=|x|+2$.
\end{corollary}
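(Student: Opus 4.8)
The plan is to exhibit $(S^1)^2 = S^1 \times S^1$ as an \emph{iterated} Loday construction and thereby reduce the statement to the single-variable computations of Examples~\ref{ex:classicalhh} and~\ref{ex:classicalhh2}. For finite sets $S,T$ there is a natural isomorphism $\mathcal{L}(A)(S\times T)\cong \mathcal{L}(\mathcal{L}(A)(S))(T)$, both sides being $A^{\otimes(S\times T)}$ with matching functoriality. Viewing $S^1\times S^1$ as a bisimplicial set, with the first factor furnishing the ``$p$-direction'' and the second the ``$q$-direction'', this produces a bisimplicial dgca $[p],[q]\mapsto \mathcal{L}\big(\mathcal{L}(\Z[x])(S^1_p)\big)(S^1_q)$ whose diagonal is $\mathcal{L}(\Z[x])\big((S^1)^2\big)$. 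I would carry out the two totalisations one at a time. Totalising in the $q$-direction first: since each $S^1_p$ is a \emph{finite} set, $\mathcal{L}(-)(S^1_p)$ is a finite tensor power, so by (\ref{eq:lodayoftensorproduct}) and the Eilenberg--Zilber theorem the simplicial dgca $[p]\mapsto \big|\mathcal{L}\big(\mathcal{L}(\Z[x])(S^1_p)\big)(S^1)\big|$ is quasi-isomorphic, naturally in $p$, to $[p]\mapsto \mathcal{L}(B)(S^1_p)$, where $B:=|\mathcal{L}(\Z[x])(S^1)|$. Totalising in $p$ and comparing the diagonal of the bisimplicial dgca with its total complex (again by Eilenberg--Zilber, with boundedness below ensuring convergence of the relevant spectral sequences) then yields
\[
\HH_\ast^{(S^1)^2}(\Z[x]) \;\cong\; \HH_\ast^{S^1}(B), \qquad B=|\mathcal{L}(\Z[x])(S^1)|.
\]

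Next I would invoke formality. By Example~\ref{ex:classicalhh} (applicable since $2\neq 0$ in $\Z$) the dgca $B$ is formal: the map $\Z[x]\otimes\bigwedge_\Z(\tau x)\to B$, $x\mapsto x$, $\tau x\mapsto 1\otimes x$, with $|\tau x|=|x|+1$, is a quasi-isomorphism of dgcas, and both sides are bounded below and degreewise free over $\Z$. By the general fact recalled above, that a quasi-isomorphism of bounded-below, degreewise-flat dgcas induces a quasi-isomorphism on $\mathcal{L}(-)(X)$ for every simplicial set $X$ (here $X=S^1$), we obtain $\HH_\ast^{S^1}(B)\cong \HH_\ast^{S^1}\big(\Z[x]\otimes\bigwedge_\Z(\tau x)\big)$. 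Now (\ref{eq:lodayoftensorproduct}) gives $\mathcal{L}\big(\Z[x]\otimes\bigwedge_\Z(\tau x)\big)(S^1)\cong \mathcal{L}(\Z[x])(S^1)\otimes\mathcal{L}\big(\bigwedge_\Z(\tau x)\big)(S^1)$; both tensor factors are complexes of free $\Z$-modules and the first has $\Z$-free homology $\HH_\ast^{S^1}(\Z[x])=\Z[x]\otimes\bigwedge_\Z(\sigma x)$ (Example~\ref{ex:classicalhh}), so the Künneth theorem yields a ring isomorphism
\[
\HH_\ast^{S^1}\big(\Z[x]\otimes{\textstyle\bigwedge_\Z}(\tau x)\big) \;\cong\; \HH_\ast^{S^1}(\Z[x])\otimes \HH_\ast^{S^1}\big({\textstyle\bigwedge_\Z}(\tau x)\big).
\]
By Example~\ref{ex:classicalhh} the first factor is $\Z[x]\otimes\bigwedge_\Z(\sigma x)$ with $|\sigma x|=|x|+1$, and by Example~\ref{ex:classicalhh2}, applied to the \emph{odd}-degree generator $\tau x$, the second is $\bigwedge_\Z(\tau x)\otimes\Gamma_\Z[\rho x]$, where $\rho x$ is the divided-power generator in degree $|\tau x|+1=|x|+2$. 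Multiplying the two factors gives the asserted algebra; the isomorphism is one of graded algebras because every step above respects the shuffle products.

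The step I expect to be the main obstacle is the iterated/Fubini identification of the first paragraph: one must commute the finite tensor powers occurring in the Loday construction past geometric realisation, and pass correctly between the diagonal and the total complex of a bisimplicial dgca. The degreewise freeness (flatness) of $\Z[x]$ and boundedness below are precisely what make these manipulations legitimate --- for the quasi-isomorphism-invariance of $\mathcal{L}(-)(X)$, for the Künneth isomorphism, and for the convergence of the total-complex spectral sequences. Once the reduction to $\HH_\ast^{S^1}\big(|\mathcal{L}(\Z[x])(S^1)|\big)$ is in place, the remaining steps are routine applications of the formality statement and of Künneth, both already available in this section.
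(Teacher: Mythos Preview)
Your proposal is correct and follows essentially the same route as the paper's proof: iterate the Loday construction over the product $(S^1)^2$, invoke formality of $|\mathcal{L}(\Z[x])(S^1)|$ from Example~\ref{ex:classicalhh}, split via (\ref{eq:lodayoftensorproduct}), and apply K\"unneth together with Examples~\ref{ex:classicalhh} and~\ref{ex:classicalhh2}. The only difference is that you spell out the Fubini/Eilenberg--Zilber step via the bisimplicial description, whereas the paper condenses this into a single quasi-isomorphism $|\mathcal{L}(\Z[x])((S^1)^2)|\simeq \mathcal{L}(|\mathcal{L}(\Z[x])(S^1)|)(S^1)$ and cites \cite[Corollary~2.4.4]{GTZ10} for it.
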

\begin{proof}
The Eilenberg-Zilber theorem gives a quasi-isomophism of dgca's
\[
|\mathcal{L}(\Z[x])((S^1)^2)| \simeq \mathcal{L}(|\mathcal{L}(\Z[x])(S^1)|)(S^1)
\]
(see, for example, \cite[Corollary 2.4.4]{GTZ10}). By Example \ref{ex:classicalhh} and using the fact that a quasi-isomorphism of flat dgcas induces a quasi-isomorphism of Loday constructions, we get a quasi-isomorphism $\mathcal{L}(|\mathcal{L}(\Z[x])(S^1)|)(S^1) \simeq |\mathcal{L}(\Z[x] \otimes \bigwedge_\Z(\sigma x))(S^1)|$. Now the claim follows from the isomorphism (\ref{eq:lodayoftensorproduct}), the K{\"u}nneth isomorphism and the computation of Example \ref{ex:classicalhh2}.
\end{proof}

\subsection{The spectral sequence} \label{sec:ss}

Let $\mathscr{C}_m$ be as defined in (\ref{eq:cm}).

\begin{proposition} \label{prop:ss}
Let $x_0,x_1,\dots$ be variables of degree $|x_i|=2i$. For every $m\geq 0$ and any commutative ring $k$, there is a first quadrant spectral sequence of algebras
\[
E^2=\HH_\ast^{(S^1)^m}(k[x_0,x_1,x_2,\dots]) \Longrightarrow H_\ast(\mathscr{C}_m;k)\, .
\]
If $k$ is a field of characteristic zero, the spectral sequence degenerates at the $E^2$-page.
\end{proposition}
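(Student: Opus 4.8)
The plan is to build the spectral sequence from a simplicial resolution of $\mathscr{C}_m$ whose homology in each simplicial degree is computed by higher Hochschild homology. First I would recall that $\mathscr{C}_m = \bigsqcup_{n\geq 0} C_m(U(n))_{hU(n)}$ is the classifying space of the topological symmetric monoidal category of finite-dimensional unitary representations of $\Z^m$; equivalently, it is the group completion's precursor built from the bar construction. The key geometric input is that $C_m(U(n))$ admits a filtration, or that the relevant homotopy-orbit space can be modelled by applying the Loday-type construction to the commutative $S^1$-spectrum-like object $BU(\cdot)$. Concretely, the standard model is this: there is a simplicial space whose $q$-simplices involve $(S^1)^m$-indexed smash/bar constructions on $\coprod_n BU(n)$, and the Loday construction $\mathcal{L}(-)((S^1)^m)$ on the $\mathbb{E}_\infty$-ring (or rather $\Gamma$-space / commutative monoid) $\coprod_n BU(n)$ computes, after group completion, exactly the homology of the free $\mathbb{E}_\infty$-space on $m$ commuting circles acting — i.e.\ of $\mathscr{C}_m$. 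I would make this precise by citing the identification $\coprod_n BU(n) \simeq$ the free $\mathbb{E}_\infty$-space on a point (up to group completion) and noting $H_\ast(\coprod_n BU(n);k) \cong k[x_0,x_1,x_2,\dots]$ with $|x_i|=2i$ (the $x_i$ being the components of the total Chern class, one copy of $x_0$ per path component), so that the bar-type/Loday construction on the space level realises $\HH^{(S^1)^m}$ of this polynomial algebra on the homology level.

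Next I would set up the spectral sequence itself. The realisation $\mathscr{C}_m$ (or the relevant model for it) is the geometric realisation of a simplicial space $X_\bullet$, and one takes the spectral sequence of the skeletal filtration: $E^1_{p,q} = H_q(X_p;k) \Rightarrow H_{p+q}(\mathscr{C}_m;k)$, with the $E^1$-differential the alternating sum of face maps. Because the simplicial object in question is precisely the levelwise Loday construction $[q]\mapsto \mathcal{L}(k[x_0,x_1,\dots])((S^1)^m_q)$ up to homology — more carefully, because $\coprod_n BU(n)$ is built so that its homology in each simplicial level is a tensor power of $k[x_0,x_1,\dots]$ over $k$, using that homology of a product is the tensor product (Künneth; the polynomial algebra is $k$-flat) — the $E^2$-page is the homology of the bicomplex $|\mathcal{L}(k[x_0,x_1,\dots])((S^1)^m)|$, i.e.\ $E^2 = \HH^{(S^1)^m}_\ast(k[x_0,x_1,\dots])$. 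That this is a spectral sequence of algebras follows because the simplicial space is one of $\mathbb{E}_\infty$-spaces (the monoidal structure on representations), so the Pontryagin product is compatible with the skeletal filtration, and on $E^2$ it is identified with the shuffle product that makes Hochschild homology a graded commutative algebra. First-quadrant-ness is immediate: $p\geq 0$ from simplicial degrees, $q\geq 0$ from homological degrees (all spaces are connective; each $C_m(U(n))$ is path-connected as noted in the text). The Appendix is presumably where the technical model — which simplicial space, why its levelwise homology is the tensor power, and the multiplicativity — is pinned down, so I would defer the most delicate point-set bookkeeping there.

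For the degeneration statement over a field $k$ of characteristic zero, the plan is a counting/formality argument. Over $\Q$ (or any characteristic-zero field) Baird's theorem \eqref{eq:bairdintro} — or rather its homological counterpart — gives $H_\ast(\mathscr{C}_m;k)$ explicitly, and one computes its Poincaré series; on the other side one computes the Poincaré series of $E^2 = \HH^{(S^1)^m}_\ast(k[x_0,x_1,\dots])$ using the iterated-Hochschild computation in the spirit of Corollary \ref{cor:hhiterated} (for $m=2$: $k[x]\otimes\bigwedge(\sigma x,\tau x)\otimes\Gamma[\rho x]$ per generator, tensored over all $x_i$; over $\Q$, $\Gamma \cong $ polynomial, so this is a free graded-commutative algebra). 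Showing these two series agree forces all higher differentials to vanish, hence degeneration at $E^2$. Alternatively, and more structurally, one can argue that over a field of characteristic zero the Loday construction $\mathcal{L}(k[x_i])(S^1)$ is formal (Example \ref{ex:classicalhh}), hence so is the iterated construction, so the realisation splits as a product and the spectral sequence collapses. I expect the Poincaré-series comparison to be the cleanest route, since the formality route still needs one to know that formality of the algebra implies collapse of this particular geometrically-defined spectral sequence.

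The main obstacle, in my view, is not the degeneration but pinning down the precise simplicial model computing $\mathscr{C}_m$ and proving that its levelwise homology is the tensor power of $k[x_0,x_1,\dots]$ compatibly with all the structure — i.e.\ the identification $E^2 = \HH^{(S^1)^m}_\ast$ as \emph{algebras}. Once that is in place (the content of the Appendix), the rest is comparison of explicit computations.
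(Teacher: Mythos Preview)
Your proposal is correct and takes essentially the same approach as the paper: the simplicial model you are looking for is the $\Gamma$-space $ku$ evaluated levelwise on $(S^1)^m_+$ (made precise in the Appendix as Proposition~\ref{prop:ssgeneral}, with the identification $\mathscr{C}_m\simeq ku((S^1)^m_+)$ imported from \cite{Grit25}), and degeneration in characteristic zero is proved exactly by showing that the $E^2$-page and the abutment are abstractly isomorphic as graded $k$-modules. The only nuance is that the paper computes the abutment not via Baird's formula summed over $n$ but via the identification of $C_\ast(\mathscr{C}_m;k)$ as the free $\mathbb{E}_\infty$-$k$-algebra on $C_\ast(BS^1\times (S^1)^m;k)$; these two routes give the same answer (cf.\ Remark~\ref{rem:sufficesn!}).
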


The bigrading of the $E^2$-page is as follows: the vertical degree is the ``internal" degree coming from the grading of the variables $x_i$, and the horizontal degree, let's say $p\geq 0$, is the ``simplicial" degree indexing the Hochschild homology groups $\HH_p^{(S^1)^m}$.

\begin{proof}
The spectral sequence is a special case of the spectral sequence we obtained in \cite[Proposition 5.5]{Grit25}. But we supply more details: Let $ku$ be the $\Gamma$-space for complex K-theory (see Example \ref{ex:ku}). First, one shows that $\mathscr{C}_m$ is homotopy equivalent to $ku((S^1)^m_+)$, where $(S^1)^m_+=\Hom(\Z^m,S^1)_+$ is the character group of $\Z^m$ with a disjoint basepoint added. This is a special case of \cite[Proposition 3.6]{Grit25}, and an almost identical statement is in \cite[Proposition 2.4]{GH19}.

The underlying space of $ku$ is $\bigsqcup_{n\geq 0}BU(n)$, whose homology is the polynomial algebra $k[x_0,x_1,\dots]$ on the duals of the powers of the first Chern class. It follows that $H_\ast(ku(\langle 1\rangle);k)$ is $k$-flat. Since $ku$ is special and cofibrant, Proposition \ref{prop:ssgeneral} gives a spectral sequence of algebras
\[
E^2=\HH^{(S^1)^m_+}(k[x_0,x_1,x_2,\dots];k) \Longrightarrow H_\ast(\mathscr{C}_m;k)\, .
\]
This becomes the claimed spectral sequence by using the isomorphism (\ref{eq:lodayextrabasepoint}).

Now suppose that $k$ is a field of characteristic zero. One shows that the spectral sequence degenerates by showing that the $E^2$-page is isomorphic to the abutment: The $E^2$-page can be computed iteratively, just like in Corollary \ref{cor:hhiterated}, by using the computations of Examples \ref{ex:classicalhh} and \ref{ex:classicalhh2}, the fact that over a field of characterstic zero a divided polynomial algebra is isomorphic to a polynomial algebra, and the fact that a free graded commutative algebra is intrinsically formal. For example,
\begin{equation*}
\begin{split}
\HH_\ast^{(S^1)^m}(k[x_0]) & \cong H_\ast(\mathcal{L}(|\mathcal{L}(k[x_0])(S^1)|)((S^1)^{m-1})) \\& \cong H_\ast(\mathcal{L}(k[x_0] \otimes {\textstyle \bigwedge_k(\sigma x_0)})((S^1)^{m-1})) \\&\cong \HH_\ast^{(S^1)^{m-1}}(k[x_0]) \otimes \HH_\ast^{(S^1)^{m-1}}({\textstyle \bigwedge_k}(\sigma x_0)) \,,
\end{split}
\end{equation*}
and further,
\begin{equation*}
\begin{split}
\HH_\ast^{(S^1)^{m-1}}({\textstyle \bigwedge_k}(\sigma x_0)) \cong \HH_\ast^{(S^1)^{m-2}}({\textstyle \bigwedge_k}(\sigma x_0)) \otimes \HH_\ast^{(S^1)^{m-2}}(\Gamma_{k}[\sigma^2 x_0]) \,.
\end{split}
\end{equation*}
Now use $\Gamma_{k}[\sigma^2 x_0]\cong k[\sigma^2 x_0]$, and continue by induction. Let $\mathscr{S}_\ast$ be a graded set with $\binom{m}{l}$ elements in degree $l$. Then $\HH_\ast^{(S^1)^m}(k[x_0])$ is a free graded commutative $k$-algebra generated by $\mathscr{S}_\ast$. Accordingly, repeating this argument with the variables $x_i$ of degree $2i$, the $E^2$-page is a free graded commutative $k$-algebra generated by the graded set $\bigsqcup_{i\geq 0} \Sigma^{2i}\mathscr{S}_{\ast}$, where $\Sigma^{2i}$ is a shift in grading by $2i$.

On the other hand, as explained in \cite[Section 6.2]{Grit25}, if $k$ is a characteristic zero field, the singular $k$-chains $C_\ast(\mathscr{C}_m;k)$ are the free $\mathbb{E}_\infty$-$k$-algebra on the singular $k$-chains $C_\ast(BS^1\times (S^1)^m;k)$. The homology of this free $\mathbb{E}_\infty$-$k$-algebra is the free graded commutative algebra on the graded $k$-module $H_\ast(BS^1 \times (S^1)^m;k)$. Since the latter is isomorphic to the free $k$-module generated by $\bigsqcup_{i\geq 0} \Sigma^{2i}\mathscr{S}_\ast$, we conclude that the $E^2$-page and the abutment are isomorphic.
\end{proof}

\begin{corollary} \label{cor:equivariantoverz}
For all $n\geq 0$, the $U(n)$-equivariant homology of $C_2(U(n))$ with integer coefficients is torsion free.
\end{corollary}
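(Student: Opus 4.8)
The plan is to extract the statement from the spectral sequence of Proposition~\ref{prop:ss} applied with $m=2$ and $k=\Z$. That spectral sequence is only claimed to degenerate rationally, but its $E^2$-page turns out to be a free $\Z$-module, and this single fact suffices to force integral degeneration and to eliminate the extension problems.

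First I would identify the $E^2$-page over $\Z$. Since $\Z[x_0,x_1,\dots]$ is the tensor product of the subalgebras $\Z[x_i]$, the isomorphism $(\ref{eq:lodayoftensorproduct})$, applied levelwise and extended through the colimit $\Z[x_0,x_1,\dots]=\colim_N\Z[x_0,\dots,x_N]$, identifies $|\mathcal{L}(\Z[x_0,x_1,\dots])((S^1)^2)|$ with $\bigotimes_{i\ge 0}|\mathcal{L}(\Z[x_i])((S^1)^2)|$. By Corollary~\ref{cor:hhiterated} each factor has homology $\Z[x_i]\otimes\bigwedge_{\Z}(\sigma x_i,\tau x_i)\otimes\Gamma_{\Z}[\rho x_i]$, a free $\Z$-module, so the Künneth $\mathrm{Tor}$-terms vanish and
\[
E^2=\HH_\ast^{(S^1)^2}(\Z[x_0,x_1,\dots])\cong\bigotimes_{i\ge 0}\Bigl(\Z[x_i]\otimes{\textstyle\bigwedge_{\Z}}(\sigma x_i,\tau x_i)\otimes\Gamma_{\Z}[\rho x_i]\Bigr)
\]
is a free, in particular torsion-free, $\Z$-module in every bidegree.

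Next I would show that every differential $d^r$, $r\ge 2$, already vanishes over $\Z$. Since $\Z\to\Q$ is flat, the functor $-\otimes_\Z\Q$ commutes with the formation of this spectral sequence, so the rational spectral sequence is the integral one base-changed to $\Q$; by Proposition~\ref{prop:ss} it degenerates at $E^2$. Now induct on $r$: if $d^s=0$ for all $2\le s<r$, then $E^r=E^2$ is torsion-free and $d^r\otimes_\Z\Q=0$; as $-\otimes_\Z\Q$ is exact, the image of $d^r$ is a $\Z$-submodule of the torsion-free module $E^r$ which becomes zero after $-\otimes_\Z\Q$, hence is already zero. Therefore $E^\infty=E^2$, torsion-free in every bidegree. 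For a first-quadrant homological spectral sequence the induced filtration on each $H_n(\mathscr{C}_2;\Z)$ is \emph{finite}, with associated graded $\bigoplus_{p+q=n}E^\infty_{p,q}$; a finite filtration all of whose subquotients are torsion-free has torsion-free total group, so $H_\ast(\mathscr{C}_2;\Z)$ is torsion-free. Since $\mathscr{C}_2=\bigsqcup_{n\ge 0}C_2(U(n))_{hU(n)}$ by $(\ref{eq:cm})$, each summand $H_\ast(C_2(U(n))_{hU(n)};\Z)$ --- the $U(n)$-equivariant homology of $C_2(U(n))$ --- is torsion-free.

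The crux, I expect, is precisely the passage from rational to integral degeneration: it relies entirely on the $E^2$-page being $\Z$-free, and the reason this is available is special to $m=2$, where each even generator $x_i$ contributes only a single divided-power factor $\Gamma_\Z[\rho x_i]$. For $m\ge 3$ one would instead have to control Hochschild homology of divided-power algebras, which is exactly where $\Z$-torsion could enter, so the argument would not go through verbatim.
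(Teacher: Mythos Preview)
Your proof is correct and follows essentially the same approach as the paper: use Proposition~\ref{prop:ss} with $m=2$, $k=\Z$; observe via Corollary~\ref{cor:hhiterated} that the $E^2$-page is torsion-free; conclude integral degeneration from rational degeneration; and read off torsion-freeness of $H_\ast(\mathscr{C}_2;\Z)$. You have simply unpacked more of the details---the tensor decomposition of the Loday construction, the induction on $r$, and the filtration argument for extensions---that the paper leaves implicit.
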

\begin{proof}
Consider the spectral sequence of Proposition \ref{prop:ss} for $m=2$ and $k=\Z$,
\[
E^2=\HH^{(S^1)^2}_\ast(\Z[x_0,x_1,x_2,\dots])\, \Longrightarrow\, H_\ast(\mathscr{C}_2;\Z)\, .
\]
By Corollary \ref{cor:hhiterated}, the $E^2$-page is torsion free. Therefore, any non-zero differential would give a non-zero differential after tensoring with $\Q$. This means that the spectral sequence degenerates at the $E^2$-page, because this is what happens over $\Q$ as we noted in Proposition \ref{prop:ss}. It follows that $H_\ast(\mathscr{C}_2;\Z)\cong E^2$, which is therefore torsion free.
\end{proof}

\begin{remark} \label{rem:torsionfreeness}
When $n$ and $m$ are sufficiently large, the equivariant homology of $C_m(U(n))$ contains torsion, and so Corollary \ref{cor:equivariantoverz} does not generalise to arbitrary $m,n$. Indeed, we proved in \cite{Grit25} that for every $m$, the spaces $C_m(U(n))_{hU(n)}$ satisfy homological stability with $\Z$-coefficients as $n$ tends to infinity. The stable homology is the homology of a product of various higher connected covers of $BU$ and $U$; the connectivity of the covers that appear depends on $m$. If $m\geq 7$, then some of these covers have torsion \cite{Singer} and hence so does $C_m(U(n))_{hU(n)}$ for large enough $n$. 
\end{remark}

Under the assumption that $k$ is a field of characteristic zero, let us describe the homology of each path-component of $\mathscr{C}_m$. Choose a maximal torus $T\subseteq U(n)$. Then $T\cong (S^1)^n$ and the Weyl group $S_n$ acts on $T$ by permuting the factors. The inclusion $T\hookrightarrow U(n)$ induces a map
\[
f_n\co BT\times T^m=(T^m)_{hT} \to C_m(U(n))_{hU(n)}\, .
\]
The space $\mathscr{C}_m$ has an $H$-space structure, whose component maps
\[
C_m(U(n))_{hU(n)} \times C_m(U(l))_{hU(l)} \to C_m(U(n+l))_{hU(n+l)}
\]
are induced by block sum of unitary matrices. Using these we can describe the map $f_n$ alternatively as follows: Let $\iota_1\co BS^1 \times (S^1)^m \hookrightarrow \mathscr{C}_m$ be the inclusion of the corresponding disjoint summand (thus, by $BS^1 \times (S^1)^m$ we actually mean $BU(1) \times C_m(U(1))$). Then $f_n$ is the composite of
\[
(\iota_1)^{\times n}\co (BS^1 \times (S^1)^m)^{\times n}\to \mathscr{C}_m^{\times n}
\]
with the multiplication $\mathscr{C}_m^{\times n}\to \mathscr{C}_m$ in the $H$-space structure of $\mathscr{C}_m$.

In fact, the equivalence $\mathscr{C}_m\simeq ku((S^1)^m_+)$ (cf. the proof of Proposition \ref{prop:ss}) and the fact that $ku$ is a special $\Gamma$-space, imply that the $H$-space structure on $\mathscr{C}_m$ extends to an $\mathbb{E}_\infty$-structure. This implies that $f_n$ factors through the homotopy $S_n$-orbits
\[
(BT\times T^m)_{hS_n} \to C_m(U(n))_{hU(n)}\,.
\]
In the proof of Proposition \ref{prop:ss} we noted that $C_\ast(\mathscr{C}_m;k)$ is a free $\mathbb{E}_\infty$-$k$-algebra generated by $C_\ast(BS^1\times (S^1)^m;k)$. This implies that the displayed map induces an isomorphism on homology:
\begin{lemma} \label{lem:char0case}
Let $k$ be a field of characteristic zero. For all $n,m\geq 0$, the inclusion of a maximal torus $T\subseteq U(n)$ induces an isomorphism
\[
H_\ast(BT\times T^m;k)_{S_n}\cong H_{\ast}^{U(n)}(C_m(U(n));k)\, .
\]
\end{lemma}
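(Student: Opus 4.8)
The plan is to deduce the lemma from the two structural facts that have already been established in the proof of Proposition \ref{prop:ss}: first, that $\mathscr{C}_m \simeq ku((S^1)^m_+)$, with $ku$ a special, cofibrant $\Gamma$-space, so that the singular chains $C_\ast(\mathscr{C}_m;k)$ form the \emph{free} $\mathbb{E}_\infty$-$k$-algebra on $C_\ast(BS^1\times(S^1)^m;k)$ when $\mathrm{char}\,k=0$; and second, that the map $f_n$ factors, via the $\mathbb{E}_\infty$-structure, through the homotopy symmetric power $(BT\times T^m)_{hS_n}$. The strategy is therefore to identify \emph{both} sides of the claimed isomorphism with a ``symmetric power'' of $H_\ast(BS^1\times(S^1)^m;k)$ and check that the natural map realises this identification.

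Concretely, I would proceed as follows. Write $Y = BS^1\times (S^1)^m = BU(1)\times C_m(U(1))$, so $\iota_1\co Y\hookrightarrow \mathscr{C}_1$ is the inclusion of the $n=1$ summand. Since over a characteristic-zero field the free $\mathbb{E}_\infty$-algebra on a chain complex is computed by the symmetric algebra on homology (the higher homotopies contribute nothing to homology rationally), the freeness statement gives, component by component in the $\bigsqcup_{n}$ decomposition,
\[
H_\ast(C_m(U(n))_{hU(n)};k)\;\cong\;\bigl(H_\ast(Y;k)^{\otimes n}\bigr)_{S_n}\,,
\]
the $S_n$-coinvariants (equivalently, since $|S_n|$ is invertible, the $S_n$-invariants) of the $n$-fold tensor power, with the grading-and-sign twisted $S_n$-action. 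This is precisely the $n$-th graded-symmetric power of $H_\ast(Y;k)$. On the other side, $BT\times T^m = (BS^1\times(S^1)^m)^{\times n} = Y^{\times n}$, so by the Künneth theorem $H_\ast(BT\times T^m;k)\cong H_\ast(Y;k)^{\otimes n}$ as an $S_n$-module (the Weyl group permutes the factors, introducing Koszul signs exactly matching those above), and hence $H_\ast(BT\times T^m;k)_{S_n}\cong (H_\ast(Y;k)^{\otimes n})_{S_n}$. The final step is to observe that the map induced by the inclusion $T\hookrightarrow U(n)$ is, by the alternative description of $f_n$ recalled just before the lemma (the composite of $(\iota_1)^{\times n}$ with the $H$-space multiplication $\mathscr{C}_m^{\times n}\to\mathscr{C}_m$), exactly the multiplication map of the free $\mathbb{E}_\infty$-algebra restricted to the $n$-th summand; on homology this is the canonical quotient $H_\ast(Y;k)^{\otimes n}\to (H_\ast(Y;k)^{\otimes n})_{S_n}$, which is the isomorphism we want after passing to $S_n$-coinvariants on the source.

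The main obstacle is bookkeeping rather than conceptual: one must make sure the $S_n$-action appearing on $H_\ast(BT\times T^m;k)$ from the Weyl-group permutation of the torus factors agrees \emph{on the nose}, including Koszul signs on the odd-degree classes in $H_\ast(T^m)=H_\ast((S^1)^m)$, with the $S_n$-action on $H_\ast(Y;k)^{\otimes n}$ coming from the symmetric-monoidal (block-sum) structure on $\mathscr{C}_m$ that underlies the free $\mathbb{E}_\infty$-algebra description. Both are ``permute the tensor factors with signs,'' so they do match, but this is the point that requires care. A second, more minor point is justifying that the free $\mathbb{E}_\infty$-$k$-algebra functor commutes with taking homology in characteristic zero and yields the graded-symmetric algebra; this is standard (it is the rational triviality of the operad $\mathbb{E}_\infty$ down to the commutative operad, or equivalently that $H_\ast$ of $(Z^{\otimes n})_{hS_n}$ is $(H_\ast(Z)^{\otimes n})_{S_n}$ when $|S_n|$ is invertible) and can be cited from \cite{Grit25} as indicated in the proof of Proposition \ref{prop:ss}.
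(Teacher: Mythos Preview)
Your proposal is correct and follows essentially the same approach as the paper: the paper's proof consists of the single observation (made just before the lemma) that $C_\ast(\mathscr{C}_m;k)$ is the free $\mathbb{E}_\infty$-$k$-algebra on $C_\ast(BS^1\times(S^1)^m;k)$, and that the map through $(BT\times T^m)_{hS_n}$ therefore induces an isomorphism on homology. You have simply unpacked this sentence, supplying the component-by-component identification with graded-symmetric powers and the sign bookkeeping that the paper leaves implicit.
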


\begin{remark} \label{rem:sufficesn!}
The conclusion of Lemma \ref{lem:char0case} also follows in an entirely different way from Baird \cite[Theorem 3.5]{Baird}.
\end{remark}

Specialising to the case $m=2$, we obtain the following corollary.

\begin{corollary} \label{cor:embedding}
For every $n\geq 0$, $f_n$ induces an embedding
\[
f_n^\ast\co H^\ast_{U(n)}(C_2(U(n));\Z) \hookrightarrow H^\ast(BT\times T^2;\Z)^{S_n}\,.
\]
\end{corollary}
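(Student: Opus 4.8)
The plan is to prove the corollary by reduction to rational coefficients, where the corresponding statement has already been established in Lemma~\ref{lem:char0case}.

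First I would check that both $\Z$-modules in question are torsion free and finitely generated in each degree, so that the change-of-coefficients maps into $\Q$-cohomology are injective. On the left: $C_2(U(n))$ is a compact (real algebraic) variety and $U(n)$ a compact Lie group, so $C_2(U(n))_{hU(n)}$ has finitely generated homology in each degree; by Corollary~\ref{cor:equivariantoverz} this homology is torsion free, hence by the universal coefficient theorem $H^\ast_{U(n)}(C_2(U(n));\Z)$ is free of finite rank in each degree and maps injectively to $H^\ast_{U(n)}(C_2(U(n));\Q)$. On the right: $H^\ast(BT\times T^2;\Z)$ is a free abelian group, being a tensor product over $\Z$ of a polynomial algebra on even generators and an exterior algebra on odd generators; hence its $S_n$-invariant subgroup is free, and it injects into $H^\ast(BT\times T^2;\Q)^{S_n}$.

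Next I would note that $f_n^\ast$ does take values in the invariant subring. Since $\mathscr{C}_2$ is an $\mathbb{E}_\infty$-space, $f_n$ factors through the homotopy quotient $(BT\times T^2)_{hS_n}$ (as recorded before Lemma~\ref{lem:char0case}), and for any $g\in S_n$ the composite of translation by $g$ on $BT\times T^2$ with the canonical map $q\co BT\times T^2\to (BT\times T^2)_{hS_n}$ is homotopic to $q$, because $ES_n$ is connected; therefore $q^\ast$, and with it $f_n^\ast$, has image in $H^\ast(BT\times T^2;\Z)^{S_n}$. The same applies with $\Q$-coefficients.

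Finally I would assemble the square, natural in the space, with horizontal arrows the maps $f_n^\ast$ for $\Z$- and $\Q$-coefficients and vertical arrows the change-of-coefficients maps. The two vertical arrows are injective by the first paragraph. The bottom arrow is an isomorphism onto $H^\ast(BT\times T^2;\Q)^{S_n}$: it is the $\Q$-linear dual of the isomorphism of Lemma~\ref{lem:char0case} for $m=2$, $k=\Q$, where one uses that over $\Q$ the $S_n$-coinvariants of a finite-dimensional representation are canonically dual to the $S_n$-invariants of the dual, together with the finite type of the rational homology of both spaces. A diagram chase then forces the top arrow to be injective, which is precisely the claim. The argument is essentially formal; the only point requiring care is the dualization and finite-type bookkeeping needed to transfer Lemma~\ref{lem:char0case} from homology to cohomology.
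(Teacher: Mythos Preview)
Your argument is correct and follows the same strategy as the paper: use Lemma~\ref{lem:char0case} to get the isomorphism over~$\Q$, then invoke Corollary~\ref{cor:equivariantoverz} to pass from injectivity over~$\Q$ back to injectivity over~$\Z$. You have simply spelled out details the paper leaves implicit---the finite-type and dualization bookkeeping turning the homology statement of Lemma~\ref{lem:char0case} into a cohomology statement, and the verification that $f_n^\ast$ lands in the $S_n$-invariants---none of which is problematic.
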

\begin{proof}
By Lemma \ref{lem:char0case}, the inclusion $T\subseteq U(m)$ induces an isomorphism
\[
H^\ast_{U(n)}(C_2(U(n));\Q)\xrightarrow{\cong} H^\ast(BT\times T^2;\Q)^{S_n}\, .
\]
As $H^\ast_{U(n)}(C_2(U(n));\Z)$ is torsion free by Corollary \ref{cor:equivariantoverz}, the result follows.
\end{proof}

A first description of $H^\ast_{\mathrm{GL}_n(\C)}(C_2(\mathrm{GL}_n(\C));\Z)$ is thus obtained by calculating the image of $f_n^\ast$.

\section{Determination of a generating set} \label{sec:generators}

In this section all (co)homology groups are taken with integer coefficients. Our aim is to compute the image of the embedding $f_n^\ast$ of Corollary \ref{cor:embedding}. Recall that $\mathscr{C}_2$ is the underlying space of an $\mathbb{E}_\infty$-space. Let
\[
\gamma\co \mathscr{C}_2\to \mathscr{C}_2^{gp}
\]
denote its group completion. By the group completion theorem \cite{SegalMcDuff}, $\gamma$ induces an isomorphism
\[
H_\ast(\mathscr{C}_2)[x_0^{-1}] \cong H_\ast(\mathscr{C}_2^{gp})\, ,
\]
where $x_0\in H_0(\mathscr{C}_2)$ is a generator of the monoid of components $\pi_0(\mathscr{C}_2)\cong \N$.

\begin{lemma} \label{lem:gcsplitinjective}
$H_\ast(\mathscr{C}_2)$ is in each degree a finitely generated free $\Z[x_0]$-module.
\end{lemma}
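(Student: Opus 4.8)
The plan is to leverage the spectral sequence of Proposition \ref{prop:ss} together with the explicit computation of its $E^2$-page from Corollary \ref{cor:hhiterated}. Recall that the $E^2$-page is
\[
\HH_\ast^{(S^1)^2}(\Z[x_0,x_1,x_2,\dots]) \cong \bigotimes_{i\geq 0}\Bigl(\Z[x_i]\otimes {\textstyle \bigwedge_\Z}(\sigma x_i,\tau x_i)\otimes \Gamma_\Z[\rho x_i]\Bigr)\, ,
\]
and that, by the argument in Corollary \ref{cor:equivariantoverz}, the spectral sequence degenerates over $\Z$, so $H_\ast(\mathscr{C}_2)\cong E^2$ as a $\Z$-module. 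The variable $x_0$ lives in bidegree $(0,0)$, hence in $H_0(\mathscr{C}_2)$, and is exactly the class generating the monoid of components; the $\Z[x_0]$-module structure on $H_\ast(\mathscr{C}_2)$ is compatible (via the multiplicativity of the spectral sequence, which is one of the assertions of Proposition \ref{prop:ss}) with the evident $\Z[x_0]$-module structure on $E^2$.

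First I would observe that, as a $\Z[x_0]$-module, the $E^2$-page above is free: it is the tensor product of $\Z[x_0]$ with the free $\Z$-module
\[
{\textstyle \bigwedge_\Z}(\sigma x_0,\tau x_0)\otimes \Gamma_\Z[\rho x_0]\otimes \bigotimes_{i\geq 1}\Bigl(\Z[x_i]\otimes {\textstyle \bigwedge_\Z}(\sigma x_i,\tau x_i)\otimes \Gamma_\Z[\rho x_i]\Bigr)\, .
\]
Next I would check finite generation in each internal degree: each generator $x_i,\sigma x_i,\tau x_i,\rho x_i$ has internal degree at least $2i$ (respectively $2i-1$, $2i+1$, $2i+2$ — all at least $2i-1$, and strictly positive for $i\geq 1$), so only finitely many of the variables $x_i,\sigma x_i,\tau x_i,\rho x_i$ with $i\geq 1$ can appear in a monomial of bounded internal degree, and for each such variable only finitely many powers (resp. divided powers) occur. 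Hence the complementary $\Z$-module displayed above is finitely generated in each degree, and therefore so is $H_\ast(\mathscr{C}_2)$ as a $\Z[x_0]$-module. Strictly speaking the degeneration of the spectral sequence only gives an isomorphism of the associated graded of a filtration of $H_\ast(\mathscr{C}_2)$ with $E^2$; but since $E^\infty=E^2$ is $\Z$-free, the filtration splits, and the $\Z[x_0]$-module structure is preserved because $x_0$ sits in filtration zero and multiplication by $x_0$ is filtration-preserving — so the associated graded $\Z[x_0]$-module and $H_\ast(\mathscr{C}_2)$ are (non-canonically) isomorphic.

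The only subtlety — and the step I would be most careful about — is matching the $\Z[x_0]$-action on $H_\ast(\mathscr{C}_2)$ coming from the $H$-space (block-sum) structure with the polynomial variable $x_0$ on the $E^2$-page. This is where multiplicativity of the spectral sequence of Proposition \ref{prop:ss} is used: the edge homomorphism in horizontal degree zero identifies $H_0(\mathscr{C}_2)$ with $\HH_0^{(S^1)^2}(\Z[x_0,x_1,\dots])=\Z[x_0,x_1,\dots]$ compatibly with products, and under the identification $\pi_0(\mathscr{C}_2)\cong\N$ the class of the one-point summand $BU(1)\times C_2(U(1))$ maps to $x_0$. Since the spectral sequence is one of algebras and degenerates, multiplication by $x_0\in H_0$ on the abutment corresponds to multiplication by $x_0$ on $E^2=E^\infty$ up to filtration, which suffices. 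This completes the proof.
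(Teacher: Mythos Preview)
Your proof is correct and follows essentially the same route as the paper: use the multiplicative spectral sequence of Proposition~\ref{prop:ss}, identify $E^\infty=E^2$ via Corollary~\ref{cor:hhiterated} and the degeneration argument of Corollary~\ref{cor:equivariantoverz}, observe this is a degreewise finitely generated free $\Z[x_0]$-module, and conclude by absence of $\Z[x_0]$-module extension problems. One small slip: $|\sigma x_i|=2i+1$, not $2i-1$, but this does not affect your finiteness argument.
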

\begin{proof}
We consider the spectral sequence of Proposition \ref{prop:ss}. According to Corollary \ref{cor:hhiterated}, the $E^\infty$-page is given by $E^\infty\cong  \bigotimes_{n\geq 0} \left(\Z[x_n] \otimes {\textstyle \bigwedge_{\Z}}(\sigma x_n,\tau x_n) \otimes \Gamma_\Z(\rho x_n) \right)$, which is a degreewise finitely generated free $\Z[x_0]$-module. The spectral sequence is one of algebras, and in particular one of $\Z[x_0]$-modules. Since the $E^\infty$-page is a free $\Z[x_0]$-module, there are no extension problems and $H_\ast(\mathscr{C}_2)\cong E^\infty$ as $\Z[x_0]$-modules.
\end{proof}

\begin{corollary} \label{cor:gcsurjective}
The induced map $\gamma^\ast\co H^\ast(\mathscr{C}_2^{gp})\to H^\ast(\mathscr{C}_2)$ is surjective.
\end{corollary}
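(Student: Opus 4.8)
The plan is to deduce the corollary purely formally from Lemma~\ref{lem:gcsplitinjective}, the group completion theorem, and the universal coefficient theorem; no new topological input is needed.

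First I would identify $\gamma_\ast$ on homology. By the group completion theorem recalled above, $\gamma_\ast\colon H_\ast(\mathscr{C}_2)\to H_\ast(\mathscr{C}_2^{gp})$ is the localization map $M\to M[x_0^{-1}]$, where $M=H_\ast(\mathscr{C}_2)$ is viewed as a $\Z[x_0]$-module; here $x_0$ has homological degree $0$ and shifts the $\pi_0$-grading $M=\bigoplus_{n\ge 0}H_\ast(C_2(U(n))_{hU(n)})$ by one. By Lemma~\ref{lem:gcsplitinjective}, in each fixed homological degree $d$ the summand $M_d$ is a finitely generated free graded $\Z[x_0]$-module; in particular $M_d$ is free abelian and $\gamma_\ast$ is injective, and $M_d[x_0^{-1}]$ is free abelian as well.

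The main point is that $(\gamma_\ast)_d\colon H_d(\mathscr{C}_2)\to H_d(\mathscr{C}_2^{gp})$ is a \emph{split} injection of abelian groups. Writing $M_d\cong\bigoplus_{j=1}^{r}\Z[x_0]\,e_j$, the localization $M_d[x_0^{-1}]\cong\bigoplus_{j=1}^{r}\Z[x_0,x_0^{-1}]\,e_j$ contains $M_d$ as the subgroup spanned by the monomials $x_0^a e_j$ with $a\ge 0$, and the quotient is spanned by the monomials $x_0^a e_j$ with $a<0$; since this quotient is free abelian, the inclusion admits a retraction.

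To conclude, I would dualize. Since $H_\ast(\mathscr{C}_2)$ and $H_\ast(\mathscr{C}_2^{gp})$ are degreewise free abelian, the universal coefficient theorem gives natural isomorphisms $H^d(-)\cong\Hom_\Z(H_d(-),\Z)$ for both spaces, identifying $\gamma^\ast$ in degree $d$ with $\Hom_\Z((\gamma_\ast)_d,\Z)$. Applying $\Hom_\Z(-,\Z)$ to a retraction of $(\gamma_\ast)_d$ produces a section of $\Hom_\Z((\gamma_\ast)_d,\Z)$, which is therefore surjective; hence $\gamma^\ast$ is surjective in every degree. I do not anticipate any real obstacle: all of the substance lives in Lemma~\ref{lem:gcsplitinjective}, and the only thing to be careful about is to run the argument one homological degree at a time, because $H_\ast(\mathscr{C}_2)$ is an infinite direct sum over $\pi_0$ (and $H^\ast(\mathscr{C}_2)$ the corresponding infinite product), so that ``the dual of an injection is a surjection'' applies only after one has produced the splitting.
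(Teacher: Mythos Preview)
Your proof is correct and follows essentially the same approach as the paper: use Lemma~\ref{lem:gcsplitinjective} together with the group completion theorem to see that $\gamma_\ast$ is, in each degree, a split injection of free abelian groups, and then dualize. The paper states this in a single sentence (``an inclusion of direct summands of free abelian groups; hence its graded dual is surjective''), whereas you have carefully spelled out why the cokernel is free and why the universal coefficient theorem applies, but the substance is the same.
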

\begin{proof}
By Lemma \ref{lem:gcsplitinjective} and by the group completion theorem the map
\[
\gamma_\ast\co H_\ast(\mathscr{C}_2)\to H_\ast(\mathscr{C}_2^{gp})
\]
is in each degree an inclusion of direct summands of free abelian groups. Hence, its graded dual is surjective.
\end{proof}

We consider the composite
\[
BT\times T^2\xrightarrow{f_n} C_2(U(n))_{hU(n)} \subseteq \mathscr{C}_2 \xrightarrow{\gamma}  \mathscr{C}_2^{gp}\, .
\]
By Corollary \ref{cor:gcsurjective}, the image of $f_n^\ast$ equals the image of $(\gamma f_n)^\ast$. The advantage of using $\gamma f_n$ instead of $f_n$ is the fact that $\mathscr{C}_2^{gp}$ is much easier to describe than $\mathscr{C}_2$. Let $ku$ denote the connective complex $K$-theory spectrum. It follows from the equivalence $\mathscr{C}_2\simeq ku((S^1)^2_+)$, or otherwise from \cite{Lawson}, that there is an equivalence
\begin{equation} \label{eq:einfinityring}
\mathscr{C}_2^{gp} \simeq \Omega^{\infty}(ku\wedge (S^1)^2_+)\, ,
\end{equation}
where $\Omega^\infty$ denotes the underlying space of a spectrum. Here we abuse notation and denote by $ku$ also the connective complex $K$-theory spectrum, and not only the $\Gamma$-space modelling it. The equivalence implies that
\[
\mathscr{C}_2^{gp} \simeq \Z\times BU \times U^2 \times BU
\]
(see the details below). Any choice of polynomial and exterior generators of $H^\ast(BU)$ and $H^\ast(U)$ provide a natural set of generators for $\im((\gamma f_n)^\ast)\cong H_{U(n)}^\ast(C_2(U(n)))$.

Write $f_{n,\infty}=\gamma f_n$. We can express $f_{n,\infty}$, up to homotopy, as the composite
\begin{equation} \label{eq:fm}
BT\times T^2=(BS^1 \times (S^1)^2)^{\times n}\xrightarrow{f_{1,\infty}^{\times n}} (\mathscr{C}_{2,1}^{gp})^{\times n} \to \mathscr{C}_{2,n}^{gp}\, ,
\end{equation}
where $\mathscr{C}_{2,k}^{gp}$ denotes the path-component of $\mathscr{C}_2^{gp}$ indexed by $k\in \Z\cong \pi_0(\mathscr{C}_2^{gp})$, and where the last map is multiplication in the $\mathbb{E}_\infty$-space $\mathscr{C}_2^{gp}$. So this essentially reduces our task to describing the map
\[
f_{1,\infty}^\ast\co H^\ast(\mathscr{C}_{2,1}^{gp})\to H^\ast(BS^1 \times (S^1)^2)\,,
\]
or equivalently, the map $(f_{1,\infty})_\ast\co H_\ast(BS^1\times (S^1)^2)\to H_\ast(\mathscr{C}_{2,1}^{gp})$. To do this, we will use some further structure on $\mathscr{C}_2^{gp}$.

Give $S^1$ its natural abelian group structure. It induces a $ku$-algebra structure on $ku\wedge (S^1)^2_+$. It follows from \cite{Lawson} that the direct sum and tensor product in $U(n)$ define an $\mathbb{E}_\infty$-ring space structure on $\mathscr{C}_2$, and hence on $\mathscr{C}_2^{gp}$, such that (\ref{eq:einfinityring}) is an equivalence of $\mathbb{E}_\infty$-ring spaces. The inclusion $\iota_1\co BS^1\times (S^1)^2 \to \mathscr{C}_2$ is an $H$-map with respect to the multiplicative $\mathbb{E}_\infty$-structure on $\mathscr{C}_2$ (that is, the $\mathbb{E}_\infty$-structure coming from the tensor product). This implies that $f_{1,\infty}$ is an $H$-map as well. Therefore, if we write
\[
(f_{1,\infty})_\ast\co \Gamma_{\Z}[x]\otimes {\textstyle \bigwedge_{\Z}}(a,b)\to H_\ast(\mathscr{C}_{2,1}^{gp})\,,
\]
where $x\in H_2(BS^1)$ and $a,b\in H_1((S^1)^2)$ are generators, it suffices to compute $(f_{1,\infty})_\ast$ on $x,a$ and $b$. Although the image of $(f_{n,\infty})^\ast$ is independent of any specific choices made, we will make various choices of generators along the way; this will eventually provide us with explicit formulas for the images of our chosen generators of $H^\ast(BU)$ and $H^\ast(U)$.

The map $f_{1,\infty}$ can be identified with the map
\[
BS^1 \times (S^1)^2 \cong C_2(U(1))_{hU(1)} \to C_2(U)_{hU}
\]
induced by the inclusion $U(1)\subseteq U$. The map $C_2(U(1))\to C_2(U)$ is a $\pi_1$-isomorphism by \cite[Theorem 1.1]{GPS}. Therefore, $f_{1,\infty}$ is an isomorphism on $H_1$. Pick generators
\begin{equation} \label{eq:basicchoice}
a,b\in H_1(BS^1 \times (S^1)^2)\,.
\end{equation}
Let $a_1,b_1\in H_1(\mathscr{C}_{2,0}^{gp})$ be the classes corresponding to $a$ and $b$, respectively, under the composite isomorphism
\[
\xymatrix{
H_1(BS^1 \times (S^1)^2)\ar[rr]^-{(f_{1,\infty})_\ast}_-{\cong} && H_1(\mathscr{C}_{2,1}^{gp}) \ar[rr]^-{\cdot x_0^{-1}}_-{\cong}  && H_1(\mathscr{C}_{2,0}^{gp})\, .
}
\]
Using the Hurewicz homomorphism
\[
h \co \pi_\ast(\mathscr{C}_{2,0}^{gp})\to H_\ast(\mathscr{C}_{2,0}^{gp})
\]
we define generators $\mu,\nu\in \pi_1(\mathscr{C}_{2,0}^{gp})$ by $a_1=h(\mu)$ and $b_1=h(\nu)$. Let $\beta\in \pi_2(\mathscr{C}_2^{gp})$ be the image of the Bott element under the unit $\Z\times BU\to \mathscr{C}_2^{gp}$.

\begin{lemma}
There is an isomorphism of $\Z[\beta]$-algebras
\[
\pi_\ast(\mathscr{C}_2^{gp})\cong \Z[\beta]\otimes {\textstyle \bigwedge_{\Z}}(\mu,\nu)\,.
\]
\end{lemma}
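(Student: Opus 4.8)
The plan is to read $\pi_\ast(\mathscr{C}_2^{gp})$ off the $K$-theoretic model. By the equivalence (\ref{eq:einfinityring}) -- which, once $S^1$ carries its group structure, is an equivalence of $\mathbb{E}_\infty$-ring spaces -- passing to homotopy groups gives an isomorphism of graded rings
\[
\pi_\ast(\mathscr{C}_2^{gp})\cong ku_\ast((S^1)^2)\,,
\]
where the right-hand side is the unreduced connective complex $K$-homology of the torus, and its multiplication is the Pontryagin product induced by the ring structure on $ku$ and the group structure on $(S^1)^2=S^1\times S^1$. So it suffices to compute this ring.

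First I would compute $ku_\ast(S^1)$. Since $S^1$ is pointed, $\Sigma^\infty_+S^1\simeq\mathbb{S}\vee\Sigma^\infty S^1$, so $ku_\ast(S^1)$ is a free $ku_\ast=\Z[\beta]$-module on the unit and one class $x$ of degree $1$. As $x$ is odd, graded commutativity gives $2x^2=0$; but $x^2$ lies in $ku_2(S^1)=\Z\beta$, which is torsion free, so $x^2=0$ and $ku_\ast(S^1)\cong\Z[\beta]\otimes{\textstyle\bigwedge_\Z}(x)$ as graded rings. Because $ku_\ast(S^1)$ is $ku_\ast$-free, the Künneth theorem in $K$-homology applies, and the external product is multiplicative for the Pontryagin products with respect to the product group structure on $S^1\times S^1$; hence
\[
\pi_\ast(\mathscr{C}_2^{gp})\cong ku_\ast(S^1)\otimes_{ku_\ast}ku_\ast(S^1)\cong\Z[\beta]\otimes{\textstyle\bigwedge_\Z}(x,y)
\]
as graded rings, where $x,y$ is a $\Z$-basis of $ku_1((S^1)^2)\cong\Z^2$. (Alternatively one can run the Atiyah--Hirzebruch spectral sequence for $ku_\ast((S^1)^2)$: it collapses at $E^2=H_\ast((S^1)^2;\Z)\otimes ku_\ast$ for degree reasons, there are no additive extensions since $E^\infty$ is $\Z[\beta]$-free, and its multiplicativity together with $a^2=b^2=0$ in $H_\ast((S^1)^2;\Z)$ and the two-torsion argument above pins down the ring structure.)

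It remains to identify generators. Under (\ref{eq:einfinityring}) the generator $\beta$ of the $ku_\ast$-factor corresponds to the image of the Bott class under the unit $\Z\times BU=\Omega^\infty ku\to\mathscr{C}_2^{gp}$, which is exactly the class denoted $\beta$ in the statement, so the isomorphism above is $\Z[\beta]$-linear. Next, $\pi_1(\mathscr{C}_2^{gp})\cong\Z^2$, and $\mu,\nu$ form a $\Z$-basis of it: this is immediate from the Hurewicz isomorphism $\pi_1(\mathscr{C}_{2,0}^{gp})\xrightarrow{\cong}H_1(\mathscr{C}_{2,0}^{gp})$ (valid since $\pi_1$ is abelian) together with the fact, recorded above, that $a_1,b_1$ form a basis of $H_1(\mathscr{C}_{2,0}^{gp})$. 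Any two $\Z$-bases of $\pi_1$ differ by an element of $\mathrm{GL}_2(\Z)$, and $\mu\nu$ then differs from $xy$ by its determinant $\pm1$; hence $\{1,\mu,\nu,\mu\nu\}$ is again a $\Z[\beta]$-basis, the relations $\mu^2=\nu^2=0$ and $\mu\nu=-\nu\mu$ hold in $\pi_\ast(\mathscr{C}_2^{gp})$, and the presentation passes verbatim to the basis $\mu,\nu$, giving the asserted isomorphism of $\Z[\beta]$-algebras.

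This lemma is not deep; its content lies in assembling the identifications correctly. The one point I would be careful about is verifying that the ring structure on $ku_\ast((S^1)^2)$ transported through (\ref{eq:einfinityring}) really is the Pontryagin ring, so that the Künneth comparison (or the Atiyah--Hirzebruch spectral sequence) is multiplicative for it. Once that is granted, the evenness of $ku_\ast$ and graded commutativity force the answer.
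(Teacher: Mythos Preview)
Your proof is correct and is essentially the paper's approach spelled out in full: the paper's own proof is the one-liner ``This follows from (\ref{eq:einfinityring}) and the K\"unneth isomorphism,'' and you have simply unpacked that K\"unneth computation of $ku_\ast((S^1)^2)$ and tracked the generators. The point you flag at the end---that the ring structure transported through (\ref{eq:einfinityring}) is the Pontryagin ring---is exactly what the paper establishes in the paragraph immediately following the lemma, via the reference to Lawson showing that (\ref{eq:einfinityring}) is an equivalence of $\mathbb{E}_\infty$-ring spaces.
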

\begin{proof}
This follows from (\ref{eq:einfinityring}) and the K{\"u}nneth isomorphism.
\end{proof}

Because $\pi_\ast(\mathscr{C}_2^{gp})$ is a free $\Z[\beta]$-module, the generators $\mu,\nu$ and $\mu\nu$ induce a splitting of $\mathbb{E}_\infty$-$(\Z\times BU)$-modules
\begin{equation} \label{eq:homotopytype}
\mathscr{C}_2^{gp}\simeq (\Z\times BU)\times BU\times U^2\, ;
\end{equation}
under it the unit $\Z\times BU\to \mathscr{C}_2^{gp}$ corresponds to the inclusion of the first factor, $\mu,\nu$ correspond to generators of $\pi_1(U^2)$, and $\mu\nu$ corresponds to a generator of $\pi_2(BU)$. By changing the sign of $a$ (and hence of $\mu$) if necessary, we may assume that $\mu\nu$ is the Bott element.

Let $\Z[x_0^{-1},x_n\,|\, n\geq 0]\subseteq H_\ast(\mathscr{C}_2^{gp})$ be the homology ring of $\Z\times BU$, where as before the $x_n$'s are the generators coming from the inclusion $BU(1)\to \{1\}\times BU$. Define the class $q_1=h(\mu\nu)\in H_2(\mathscr{C}_{2,0}^{gp})$. The homology of $\mathscr{C}_2^{gp}$ is a Hopf ring (for the notion of Hopf ring see e.g. \cite{RW}). We let $\circ$ denote the Hopf ring product, that is, the product on $H_\ast(\mathscr{C}_2^{gp})$ induced by the multiplicative $\mathbb{E}_\infty$-structure. For every $n\geq 1$ define
\[
q_n:=x_{n-1}\circ q_1,\quad a_n:=x_{n-1}\circ a_1, \quad b_n:=x_{n-1}\circ b_1\, .
\]
Under (\ref{eq:homotopytype}), the classes $q_1,q_2,\dots$ generate the subspace of primitives of $H_\ast(BU)$, while $a_1,a_2,\dots$ and $b_1,b_2,\dots$ are primitive generators for the exterior algebra $H_\ast(U^2)$.

Let $x\in H_2(BS^1)$ be the generator with $(f_{1,\infty})_\ast(x)=x_1 \in H_{\ast}(\mathscr{C}_{2,1}^{gp})$, and thus with $(f_{1,\infty})_\ast(\gamma_n(x))=x_n$, where $\gamma_n(x)$ is the $n$-th divided power of $x$. Since $f_{1,\infty}$ is an $H$-map, the map induced on homology
\[
(f_{1,\infty})_\ast\co \Gamma_{\Z}[x]\otimes {\textstyle \bigwedge_{\Z}}(a,b)\to H_\ast(\mathscr{C}_{2,1}^{gp})
\]
is a ring homomorphism with respect to the $\circ$-product.

\begin{lemma} \label{lem:imagehomology1}
We have $(f_{1,\infty})_\ast(ab)=x_0 q_1+x_0 a_1 b_1$ in $H_2( \mathscr{C}_{2,1}^{gp})$.
\end{lemma}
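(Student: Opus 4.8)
The plan is to leverage the fact, established just above, that $(f_{1,\infty})_\ast\co \Gamma_{\Z}[x]\otimes \bigwedge_{\Z}(a,b)\to H_\ast(\mathscr{C}_{2,1}^{gp})$ is a homomorphism of rings for the $\circ$-product. Since $ab$ is the product of $a$ and $b$ in the source, this reduces the computation to evaluating $(f_{1,\infty})_\ast(a)\circ (f_{1,\infty})_\ast(b)$. By the very definitions of $a_1,b_1\in H_1(\mathscr{C}_{2,0}^{gp})$ we have $(f_{1,\infty})_\ast(a)=x_0\cdot a_1$ and $(f_{1,\infty})_\ast(b)=x_0\cdot b_1$, where $\cdot$ is the additive (block-sum) product and $x_0\in H_0(\mathscr{C}_{2,1}^{gp})$. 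So the task becomes the computation of the $\circ$-product $(x_0\cdot a_1)\circ (x_0\cdot b_1)$ inside the Hopf ring $H_\ast(\mathscr{C}_2^{gp})$.

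To carry this out I would apply the Hopf-ring distributivity law relating $\circ$ to the $\cdot$-coproduct $\psi$, using three structural inputs. (i) $x_0$ is the unit for $\circ$: it is the fundamental class of the path-component of $\mathscr{C}_2^{gp}$ containing the multiplicative unit. (ii) Because $a_1$ and $b_1$ are primitive (being primitive generators of $H_\ast(U^2)$) and $x_0$ is grouplike, one has $\psi(x_0\cdot a_1)=(x_0 a_1)\otimes x_0+x_0\otimes (x_0 a_1)$ and likewise for $b_1$. (iii) The additive unit $x_0'=[0]\in H_0(\mathscr{C}_{2,0}^{gp})$ is absorbing for $\circ$ up to augmentation, so $y\circ x_0'=0$ for $y$ of positive degree; this is the standard Hopf-ring axiom for the class $[0]$, which holds here because $[0]$ is represented by the zero map of ring spectra under $\mathscr{C}_2^{gp}\simeq \Omega^\infty(ku\wedge (S^1)^2_+)$. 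Expanding $(x_0\cdot a_1)\circ (x_0\cdot b_1)$ via (ii), the summands containing a factor $a_1\circ x_0'$ or $b_1\circ x_0'$ drop out by (iii), and the remaining summands simplify, using (i), to $x_0\cdot (a_1\cdot b_1)+x_0\cdot (a_1\circ b_1)$. I emphasize that all Koszul signs in the distributivity law are trivial in this computation because the classes $x_0$ and $x_0'$ sit in degree zero.

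Finally I would identify $a_1\circ b_1$ with $q_1$: the Hurewicz map $h\co \pi_\ast(\mathscr{C}_2^{gp})\to H_\ast(\mathscr{C}_2^{gp})$ is multiplicative for $\circ$, and $\mu\nu$ is by definition the product of $\mu$ and $\nu$ in the ring $\pi_\ast(\mathscr{C}_2^{gp})$, so $a_1\circ b_1=h(\mu)\circ h(\nu)=h(\mu\nu)=q_1$. Putting the pieces together gives $(f_{1,\infty})_\ast(ab)=x_0\cdot (a_1 b_1)+x_0\cdot q_1=x_0 q_1+x_0 a_1 b_1$ in $H_2(\mathscr{C}_{2,1}^{gp})$, which is the assertion.

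I expect the only genuinely delicate point to be the bookkeeping in the Hopf-ring distributivity law — keeping straight which coproduct to expand and verifying that the surviving terms are exactly the two displayed ones — together with the consistent treatment of the two degree-zero unit classes $x_0$ (for $\circ$) and $x_0'$ (for $\cdot$). Everything else is formal manipulation in the Hopf ring.
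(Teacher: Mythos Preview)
Your argument is correct and follows essentially the same route as the paper: compute $(x_0 a_1)\circ(x_0 b_1)$ via the Hopf-ring distributivity law, then identify $a_1\circ b_1=h(\mu)\circ h(\nu)=h(\mu\nu)=q_1$ using multiplicativity of the Hurewicz map. The paper compresses your points (i)--(iii) into the single phrase ``using the distributivity law for $\circ$'', but your more careful bookkeeping with the two units $x_0$ and $x_0'$ is exactly what underlies that step.
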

\begin{proof}
Using the distributivity law for $\circ$ we get
\[
(f_{1,\infty})_\ast(ab)=(x_0 a_1)\circ (x_0 b_1)=x_0 (a_1\circ b_1)+x_0 a_1b_1\, .
\]
Since $h\co \pi_\ast(\mathscr{C}_{2,0}^{gp})\to H_\ast(\mathscr{C}_{2,0}^{gp})$ is a ring homomorphism with respect to $\circ$, we have that $a_1\circ b_1=h(\mu)\circ h(\nu)=h(\mu\nu)=q_1$.
\end{proof}

Let $c_1,c_2,\dots$ and $d_1,d_2,\dots$ denote the Chern classes of the two $BU$ factors in (\ref{eq:homotopytype}). From now on we use the splitting (\ref{eq:homotopytype}) to identify the cohomology ring of $\mathscr{C}_{2,1}^{gp}$ with that of $BU\times U^2\times BU$. Therefore,
\begin{equation} \label{eq:cohomologyringc2gp}
H^\ast(\mathscr{C}_{2,1}^{gp})= \Z[c_n,d_n\mid n\geq 1]\otimes {\textstyle \bigwedge_\Z}(\alpha_n,\beta_n\mid n\geq 1)\,,
\end{equation}
where the $\alpha_n$ and $\beta_n$ are defined as dual to $x_0 a_n$ and $x_0 b_n$, respectively. Let $t,u,v \in H^\ast(BS^1\times (S^1)^2)$ be generators dual to $x,a,b$, respectively.

\begin{lemma} \label{lem:imagecohomology1}
For every $n\geq 0$, the map $(f_{1,\infty})^\ast\co H^\ast(\mathscr{C}_{2,1}^{gp})\to H^\ast(BS^1\times (S^1)^2)$ sends $\alpha_n\mapsto t^{n-1}u$ (by symmetry, $\beta_n\mapsto t^{n-1}v$) and $d_n\mapsto (-1)^{n-1}t^{n-1}u v$.
\end{lemma}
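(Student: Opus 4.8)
\emph{Proof strategy.} The plan is to compute the dual map $(f_{1,\infty})_\ast$ on integral homology; since $H_\ast(\mathscr{C}_{2,1}^{gp})\cong H_\ast(BU\times U^2\times BU)$ is free and of finite type in each degree, this determines $(f_{1,\infty})^\ast$. The essential input is that $f_{1,\infty}$ is an $H$-map for the multiplicative ($\circ$) $\mathbb{E}_\infty$-structure: the source $BS^1\times(S^1)^2=C_2(U(1))_{hU(1)}$ carries the product coming from tensoring $U(1)$-representations, which under the identification $BS^1\times(S^1)^2=K(\Z,2)\times K(\Z,1)^2$ is just the product of the abelian group structures; in particular $\gamma_m(x)\circ a=\gamma_m(x)\cdot a$ (exterior product), and likewise for the other cross-products. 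Thus $(f_{1,\infty})_\ast\co\Gamma_\Z[x]\otimes\bigwedge_\Z(a,b)\to H_\ast(\mathscr{C}_{2,1}^{gp})$ is a ring homomorphism for $\circ$, and by the normalisations fixed earlier it sends $\gamma_m(x)\mapsto x_m$, $a\mapsto x_0a_1$, $b\mapsto x_0b_1$, and, by Lemma~\ref{lem:imagehomology1}, $ab\mapsto x_0q_1+x_0a_1b_1$.

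First I would compute $(f_{1,\infty})_\ast$ on the remaining monomials by expanding with the Hopf ring distributivity law in $H_\ast(\mathscr{C}_2^{gp})$, using the coproduct $\Delta x_m=\sum_{i+j=m}x_i\otimes x_j$, the fact that $x_0$ is the $\circ$-unit, and that the Hurewicz map $h\co\pi_\ast(\mathscr{C}_{2,0}^{gp})\to H_\ast(\mathscr{C}_{2,0}^{gp})$ respects $\circ$. With $a_k=x_{k-1}\circ a_1$, $b_k=x_{k-1}\circ b_1$, $q_k=x_{k-1}\circ q_1$ as in the text, distributivity gives (no signs arise, as all $x_i$ have even degree and $x_0$ has degree zero)
\[
(f_{1,\infty})_\ast(\gamma_m(x)\cdot a)=x_m\circ(x_0a_1)=\sum_{i+j=m}(x_i\circ x_0)(x_j\circ a_1)=\sum_{k=1}^{m+1}x_{m+1-k}\,a_k\,,
\]
and likewise $(f_{1,\infty})_\ast(\gamma_m(x)\cdot b)=\sum_{k=1}^{m+1}x_{m+1-k}b_k$ together with
\[
(f_{1,\infty})_\ast(\gamma_m(x)\cdot ab)=\sum_{k=1}^{m+1}x_{m+1-k}\,q_k\;+\;\sum_{i+p+q=m}x_i\,a_{p+1}\,b_{q+1}\,.
\]
Every term on the right except $x_0a_{m+1}$ (resp.\ $x_0b_{m+1}$, resp.\ $x_0q_{m+1}$) either carries a factor $x_i$ with $i\geq1$ or lies in the subring $H_\ast(\Z\times BU)\cdot H_\ast(U^2)$.

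Next I would dualise against the generators of (\ref{eq:cohomologyringc2gp}): $\alpha_n,\beta_n$ are the exterior generators dual to $x_0a_n,x_0b_n$, and $d_n$ is the $n$-th Chern class of the second $BU$-factor of (\ref{eq:homotopytype}). A positive-degree class supported on one tensor factor of $H^\ast(BU\times U^2\times BU)$ pairs trivially with any homology class whose component in that factor is $1$; so all the ``lower'' terms above are killed by $\alpha_n,\beta_n,d_n$, and the surviving pairings are $\langle\alpha_n,x_0a_{m+1}\rangle=\langle\beta_n,x_0b_{m+1}\rangle=\delta_{n,m+1}$ and $\langle d_n,x_0q_{m+1}\rangle=\langle d_n,q_{m+1}\rangle$. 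It remains to evaluate $\langle d_n,q_k\rangle$. By construction $q_{m+1}=x_m\circ q_1$ is the image of $\gamma_m(x)\otimes[S^2]$ under the map $BS^1\times S^2\to\mathscr{C}_2^{gp}$ obtained by multiplying the map $BS^1\to\mathscr{C}_2^{gp}$ (with $\gamma_m(x)\mapsto x_m$) with the Bott map $S^2\to\mathscr{C}_{2,0}^{gp}$ representing $\mu\nu$; composing with the projection onto the second $BU$-factor, this classifies the virtual line bundle $L\otimes L'-L$, where $c_1(L)=t$ generates $H^2(BS^1)$ and $c_1(L')$ generates $H^2(S^2)$. By the Whitney formula its total Chern class is $(1+t+c_1(L'))/(1+t)$, whose degree-$2n$ part is $(-1)^{n-1}t^{n-1}c_1(L')$; pairing with $\gamma_m(x)\otimes[S^2]$ yields $\langle d_n,q_{m+1}\rangle=(-1)^{n-1}\delta_{n,m+1}$. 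Assembling the pieces, $(f_{1,\infty})^\ast(\alpha_n)=\sum_m\delta_{n,m+1}\,t^mu=t^{n-1}u$; the case of $\beta_n$ follows by the $a\leftrightarrow b$ symmetry; and $(f_{1,\infty})^\ast(d_n)=\sum_m(-1)^{n-1}\delta_{n,m+1}\,t^muv=(-1)^{n-1}t^{n-1}uv$.

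The main obstacle is evaluating $\langle d_n,q_k\rangle$, i.e.\ converting the Hopf-ring description of the primitive generators $q_k$ (built from the Bott element) into a statement about Chern classes. This is precisely where the sign $(-1)^{n-1}$ is produced, and it rests on identifying $q_k$ as a homology pushforward whose Chern classes can be read off from the total Chern class of a difference of line bundles. The intervening bookkeeping — which monomials survive the various pairings — is routine, provided one keeps in mind that $x_0$ is the $\circ$-unit while the basepoint class $1$ of the degree-$0$ component is $\circ$-absorbing.
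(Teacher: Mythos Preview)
Your proof is correct and follows the same overall strategy as the paper: compute $(f_{1,\infty})_\ast$ on a homology basis using Hopf ring distributivity, then dualise via the Kronecker pairing. The treatment of $\alpha_n$ and $\beta_n$ is essentially identical to the paper's.

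The only genuine difference is in how you evaluate the key pairing $\langle d_n,q_n\rangle$. The paper argues via the Chern character: it uses that the $n$-th Chern number of the $n$-th power of the Bott element is $(-1)^{n-1}(n-1)!$, together with the Hopf ring identity $h(\beta^{n-1}\mu\nu)=x_1^{\circ(n-1)}\circ q_1=(n-1)!\,q_n$, to read off $\langle d_n,q_n\rangle=(-1)^{n-1}$. You instead realise $q_{m+1}$ geometrically as the pushforward of $\gamma_m(x)\otimes[S^2]$ along a map $BS^1\times S^2\to BU$, identify the classified virtual bundle as $L\otimes L'-L$, and compute its total Chern class directly. Both routes are valid and yield the same sign; yours is more explicit, the paper's more economical.

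One point deserves a line of justification in your write-up: the claim that the composite $BS^1\times S^2\to\mathscr{C}_2^{gp}\to BU$ (second $BU$-factor) classifies $L\otimes L'-L$. This holds because in the $ku$-module splitting $ku\wedge(S^1)^2_+\simeq ku\oplus\Sigma ku\oplus\Sigma ku\oplus\Sigma^2 ku$, the $\circ$-action of the unit summand on the top summand is just the $ku$-module structure on $\Sigma^2 ku\simeq ku\langle 2\rangle\hookrightarrow ku$; on underlying infinite loop spaces this is tensor product of virtual bundles $(\Z\times BU)\times BU\to BU$, sending $(L,\,L'-1)\mapsto L\otimes L'-L$. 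You should also note that the convention fixed in the paper (``$\mu\nu$ is the Bott element'') pins down $\langle c_1(L'),[S^2]\rangle=+1$, so the sign computation goes through as written.
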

\begin{proof}
The class $(f_{1,\infty})^\ast(\alpha_n)$ is completely determined by the Kronecker pairings
\[
\lambda_a=\langle(f_{1,\infty})^\ast(\alpha_n),\gamma_{n-1}(x)a\rangle\quad \textnormal{and}\quad \lambda_b=\langle(f_{1,\infty})^\ast(\alpha_n),\gamma_{n-1}(x)b\rangle\, .
\]
To compute $\lambda_a$ we note that $(f_{1,\infty})_\ast(\gamma_{n-1}(x)a)=x_{n-1}\circ (x_0 a_1)$, which by the distributivity law in Hopf rings is $\sum_i x_i a_{n-i}$. Since $\alpha_n$ is dual to $x_0 a_n$, it follows that $\lambda_a=1$. In the same way we see that $\lambda_b=0$, hence $(f_{1,\infty})^\ast(\alpha_n)$ is the class dual to $\gamma_{n-1}(x) a$, which is $t^{n-1}u$.

To compute $(f_{1,\infty})^\ast(d_n)$ we proceed in the same way. This class is determined by the pairings
\[
\lambda_\emptyset=\langle(f_{1,\infty})^\ast(d_n),\gamma_{n}(x)\rangle\quad \textnormal{and}\quad \lambda_{ab}=\langle(f_{1,\infty})^\ast(d_n),\gamma_{n-1}(x)ab\rangle\, .
\]
Since $(f_{1,\infty})_\ast(\gamma_n(x))=x_n$, which is a homology class of the first $BU$-factor, while $d_n$ is a Chern class of the second $BU$-factor, we find that $\lambda_{\emptyset}=0$. On the other hand, by Lemma \ref{lem:imagehomology1} we have
\[
(f_{1,\infty})_\ast(\gamma_{n-1}(x)ab)=x_{n-1}\circ (x_0 q_1+x_0 a_1b_1)=x_0 q_{n} + f\,,
\]
where $f$ is a linear combination of terms $x_i q_{n-i}$ with $i\geq 1$ and terms independent of $q_i$ for any $i$, on all of which $d_n$ evaluates to zero. Hence, $\lambda_{ab}=\langle d_n,x_0 q_n\rangle$. Upon changing path components in $\Z\times BU\times U^2\times BU$, this is the value of the $n$-th Chern class $d_n\in H^{2n}(BU)$ on $q_n\in H_{2n}(BU)$. The Chern character shows that the $n$-th Chern number of the $n$-th power of the Bott element is $(-1)^{n-1}(n-1)!$. Since $h(\beta^{n-1}\mu\nu)=x_1^{\circ (n-1)}\circ q_1=(n-1)! q_n$, 
and $\beta^{n-1}\mu\nu$ is the $n$-th power of the Bott element in $\pi_{2n}(BU)$, we obtain that $\lambda_{ab}=\langle d_n,x_0 q_n\rangle=(-1)^{n-1}$. It follows that $(f_{1,\infty})^\ast(d_n)$ is the class dual to $(-1)^{n-1}\gamma_{n-1}(x)ab$, which is $(-1)^{n-1}t^{n-1}uv$.
\end{proof}

We finally compute the map $(f_{n,\infty})^\ast\co H^\ast( \mathscr{C}_{2,n}^{gp})\to H^\ast(BT\times T^2)$. We identify the cohomology rings $H^\ast(\mathscr{C}_{2,n}^{gp})$ for all $n$ with that in (\ref{eq:cohomologyringc2gp}) (through the equivalences induced by multiplication by $x_0$). Write
\begin{equation} \label{eq:btt2}
H^\ast(BT\times T^2)\cong \Z[t_1,\dots,t_n]\otimes \Lambda(u_1,\dots,u_n)\otimes \Lambda(v_1,\dots,v_n)
\end{equation}
with $|t_i|=2$ and $|u_i|=|v_i|=1$ for all $1\leq i \leq n$. The $n$-fold product $(\mathscr{C}_{2,1}^{gp})^{\times n} \to \mathscr{C}_{2,n}^{gp}$ in the $\mathbb{E}_\infty$-space $\mathscr{C}_2^{gp}$ induces an iterated coproduct
\[
\psi^{(n-1)}\co H^\ast(\mathscr{C}_{2,n}^{gp})\to H^\ast(\mathscr{C}_{2,1}^{gp})^{\otimes n}\, .
\]
To express this coproduct it is convenient to introduce the formal power series $d(s)-1=\sum_{l\geq 1} d_l s^l$, $\alpha(s)=\sum_{l\geq 1} \alpha_l s^{l-1}$ and $\beta(s)=\sum_{l\geq 1} \beta_l s^{l-1}$. Then
\[
\psi^{(n-1)}(d(s))=d(s)^{\otimes n}\textnormal{ and } \psi^{(n-1)}(\alpha(s))=\sum_{i=1}^n 1\otimes \cdots \otimes 1\otimes \alpha(s) \otimes 1\otimes \cdots \otimes 1
\]
where the $i$-th summand has $\alpha(s)$ in the $i$-th spot; similarly, for $\beta(s)$.

\begin{lemma} \label{lem:imagecohomology}
The map $(f_{n,\infty})^\ast\co H^\ast(\mathscr{C}_{2,n}^{gp})\to H^\ast(BT\times T^2)$ satisfies
\[
(f_{n,\infty})^\ast(\alpha(s))=\sum_{i=1}^n \frac{u_i}{1-t_i s},\quad \textnormal{ and }\quad (f_{n,\infty})^\ast(d(s))=\prod_{i=1}^n\left(1+\frac{s u_i v_i}{1+t_i s}\right)\,,
\]
(by symmetry, $(f_{n,\infty})^\ast(\beta(s))=\sum_{i=1}^n \frac{v_i}{1-t_i s}$).
\end{lemma}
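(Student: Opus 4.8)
The plan is to combine the factorisation of $f_{n,\infty}$ recorded in (\ref{eq:fm}) with the coproduct formulas for $\psi^{(n-1)}$ stated above and the rank-one computation of Lemma \ref{lem:imagecohomology1}. All the cohomology rings in sight are free as abelian groups (see (\ref{eq:cohomologyringc2gp}) and (\ref{eq:btt2})), so the K\"unneth map is an isomorphism and, by (\ref{eq:fm}), $(f_{n,\infty})^\ast$ is the composite
\[
H^\ast(\mathscr{C}_{2,n}^{gp})\xrightarrow{\psi^{(n-1)}}H^\ast(\mathscr{C}_{2,1}^{gp})^{\otimes n}\xrightarrow{\,((f_{1,\infty})^\ast)^{\otimes n}\,}H^\ast\!\bigl(BS^1\times(S^1)^2\bigr)^{\otimes n}=H^\ast(BT\times T^2),
\]
where the last identification sends the generators $t,u,v$ of the $i$-th tensor factor to $t_i,u_i,v_i$.

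First I would rewrite Lemma \ref{lem:imagecohomology1} in terms of the generating series. From $(f_{1,\infty})^\ast(\alpha_l)=t^{l-1}u$ and $(f_{1,\infty})^\ast(d_l)=(-1)^{l-1}t^{l-1}uv$, summing geometric series gives
\[
(f_{1,\infty})^\ast(\alpha(s))=\frac{u}{1-ts},\qquad (f_{1,\infty})^\ast(d(s))=1+\frac{suv}{1+ts},
\]
and symmetrically $(f_{1,\infty})^\ast(\beta(s))=v/(1-ts)$; these identities are read coefficientwise in $s$, each coefficient being a single class of fixed degree, so no convergence issue arises. Applying $((f_{1,\infty})^\ast)^{\otimes n}$ to the coproduct formulas $\psi^{(n-1)}(\alpha(s))=\sum_{i=1}^n 1\otimes\cdots\otimes\alpha(s)\otimes\cdots\otimes 1$ and $\psi^{(n-1)}(d(s))=d(s)^{\otimes n}$, and using the identification of tensor factors above, then yields
\[
(f_{n,\infty})^\ast(\alpha(s))=\sum_{i=1}^n\frac{u_i}{1-t_is},\qquad (f_{n,\infty})^\ast(d(s))=\prod_{i=1}^n\Bigl(1+\frac{su_iv_i}{1+t_is}\Bigr),
\]
together with the symmetric statement for $\beta(s)$. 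Here each $s$-coefficient of $d(s)$ lies in even degree (the class $uv$ has degree $2$), so expanding the tensor power and the $n$-fold product involves no Koszul signs, and each summand of $\psi^{(n-1)}(\alpha(s))$ has a single non-trivial tensor slot, so again no signs intervene.

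There is no genuine obstacle here: the content has been localised in Lemma \ref{lem:imagecohomology1} and in the coproduct formulas, which in turn follow from the additive $\mathbb{E}_\infty$-structure in the splitting (\ref{eq:homotopytype}) --- the classes $\alpha_l,\beta_l$ are primitive, so $\alpha(s)$ and $\beta(s)$ are primitive, while $d(s)$ is the total Chern class of the $BU$-factor and hence grouplike for the additive coproduct by Whitney-sum multiplicativity. The only point that requires a moment's care is the sign $(-1)^{l-1}$ carried by $d_l$: it is precisely what turns $\sum_{l\geq 1}(-1)^{l-1}t^{l-1}s^{l}$ into $s/(1+ts)$ rather than $s/(1-ts)$, thereby producing the denominator $1+t_is$ in the final formula. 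The case $n=0$ is vacuous, both sides being the empty sum, respectively empty product.
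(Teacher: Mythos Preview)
Your proof is correct and follows essentially the same approach as the paper: factor $(f_{n,\infty})^\ast$ through the iterated coproduct $\psi^{(n-1)}$ via (\ref{eq:fm}), then apply Lemma \ref{lem:imagecohomology1} in each tensor factor. Your additional remarks on the K\"unneth isomorphism and the absence of Koszul signs are welcome clarifications but do not change the underlying argument.
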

\begin{proof}
By (\ref{eq:fm}), the map $(f_{n,\infty})^\ast$ is the composition of $\psi^{(n-1)}$ with $(f_{1,\infty}^\ast)^{\otimes n}$. If we apply $(f_{1,\infty})^\ast$ in the $i$-th tensor factor, then we use the formulas from Lemma \ref{lem:imagecohomology1}, but with variables $t_i,u_i,v_i$ indexed by $i$. In this way we obtain:
\[
(f_{n,\infty})^\ast(\alpha(s))= \sum_{l\geq 1} \sum_{i=1}^n t_i^{l-1}u_i s^{l-1}=\sum_{i=1}^n \frac{u_i}{1-t_i s}\,,
\]
and similarly for $\beta(s)$. We also obtain
\[
(f_{n,\infty})^\ast(d(s))=\prod_{i=1}^n \left(1+\sum_{l\geq 1} (-1)^{l-1} t_i^{l-1}u_i v_i s^l\right)=\prod_{i=1}^n\left(1+\frac{s u_i v_i}{1+t_i s}\right)\,.
\]
\end{proof}

It remains to compute the image of the Chern classes $c_l$. Let $\pi\co \mathscr{C}_2^{gp}\to \Z\times BU$ be the projection. The diagram
\[
\xymatrix{
BT\times T^2\ar[r]^-{f_{n,\infty}} \ar[d]^-{\textnormal{proj.}}&  \mathscr{C}_{2,n}^{gp} \ar[d]^-{\pi} \\
BT\ar[r]^-{\textnormal{incl.}} & \{n\}\times BU
}
\]
commutes up to homotopy. It follows that
\[
(f_{n,\infty})^\ast(c_l)=e_l(t_1,\dots,t_n)\in H^\ast(BT)^{S_n}\subseteq H^\ast(BT\times T^2)
\]
is the $l$-th elementary symmetric polynomial.

For $l\geq 1$, we let $X_l$ and $Y_l$ be the coefficient of $s^{l-1}$ in the expansion of $(f_{n,\infty})^\ast(\alpha(s))$ and $(f_{n,\infty})^\ast(\beta(s))$ as formal power series, respectively, and $Z_l$ the coefficient of $s^l$ in the expansion of $(f_{n,\infty})^\ast(d(s))$. These are expressions in the variables $t_1,\dots,t_n, u_1,\dots,u_n,v_1,\dots,v_n$ which are symmetric under simultaneous permutation of the three sets of variables.

\begin{proposition} \label{prop:3ngenerators}
The image of the embedding $f_n^\ast$ of Corollary \ref{cor:embedding} is generated as a $\Lambda_n$-algebra by $X_l,Y_l$ and $Z_l$ for $l\leq n$.
\end{proposition}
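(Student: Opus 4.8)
The plan is to identify the image as a $\Lambda_n$-subalgebra using the ring presentation of $H^\ast(\mathscr{C}_{2,n}^{gp})$ worked out above, and then to show that the generators of high index are redundant by exhibiting explicit linear recursions. First, by Corollary~\ref{cor:gcsurjective} together with $f_{n,\infty}=\gamma f_n$, the image of $f_n^\ast$ coincides with the image of $f_{n,\infty}^\ast\co H^\ast(\mathscr{C}_{2,n}^{gp})\to H^\ast(BT\times T^2)$. By (\ref{eq:cohomologyringc2gp}) the ring $H^\ast(\mathscr{C}_{2,n}^{gp})$ is generated by the classes $c_l,d_l,\alpha_l,\beta_l$ with $l\geq 1$; since $f_{n,\infty}^\ast(c_l)=e_l(t_1,\dots,t_n)$ (the $l$-th elementary symmetric polynomial; these vanish for $l>n$, and for $l\leq n$ they generate $\Lambda_n$), and $f_{n,\infty}^\ast(\alpha_l)=X_l$, $f_{n,\infty}^\ast(\beta_l)=Y_l$, $f_{n,\infty}^\ast(d_l)=Z_l$, it follows that $\im(f_n^\ast)$ is exactly the $\Lambda_n$-subalgebra of $H^\ast(BT\times T^2)$ generated by $\{X_l,Y_l,Z_l\mid l\geq 1\}$. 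Hence it suffices to prove that for $l>n$ each of $X_l$, $Y_l$, $Z_l$ already lies in the $\Lambda_n$-submodule spanned by the generators of the same type with index $\leq n$.

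For $X_l$ (and for $Y_l$, by the symmetry exchanging the $u_i$ with the $v_i$) I would start from the formula $X_l=\sum_{i=1}^n t_i^{\,l-1}u_i$ of Lemma~\ref{lem:imagecohomology}. Each $t_i$ is a root of $\prod_{j=1}^n(x-t_j)=\sum_{k=0}^n(-1)^k e_k(t_1,\dots,t_n)\,x^{n-k}$, so $t_i^{\,m}=\sum_{k=1}^n(-1)^{k-1}e_k(t_1,\dots,t_n)\,t_i^{\,m-k}$ for every $m\geq n$. Multiplying by $u_i$ and summing over $i$ gives
\[
X_l=\sum_{k=1}^n(-1)^{k-1}e_k(t_1,\dots,t_n)\,X_{l-k}\qquad(l\geq n+1),
\]
and an induction on $l$ shows that every $X_l$ with $l>n$ is a $\Lambda_n$-linear combination of $X_1,\dots,X_n$.

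For $Z_l$ the key step is to linearise the relevant generating series. By Lemma~\ref{lem:imagecohomology} we have $\sum_{l\geq 0}Z_l s^l=\prod_{i=1}^n\bigl(1+\frac{s\,u_iv_i}{1+t_i s}\bigr)$, with $Z_0=1$. Since $u_iv_i$ is of even degree and $(u_iv_i)^2=0$, one has $1+\frac{s\,u_iv_i}{1+t_i s}=\frac{1+(t_i+u_iv_i)s}{1+t_i s}$, so that
\[
\Bigl(\sum_{k=0}^n e_k(t_1,\dots,t_n)\,s^k\Bigr)\cdot\sum_{l\geq 0}Z_l s^l=\prod_{i=1}^n\bigl(1+(t_i+u_iv_i)s\bigr),
\]
and the right-hand side is a polynomial in $s$ of degree at most $n$. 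Comparing the coefficient of $s^m$ for $m>n$ gives $\sum_{k=0}^n e_k(t_1,\dots,t_n)\,Z_{m-k}=0$, i.e.
\[
Z_m=-\sum_{k=1}^n e_k(t_1,\dots,t_n)\,Z_{m-k}\qquad(m>n),
\]
so that, as before, an induction on $l$ shows that every $Z_l$ with $l>n$ is a $\Lambda_n$-linear combination of $Z_1,\dots,Z_n$. Combining the three cases finishes the proof.

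The only non-formal step is the rational-function identity for the $Z$-series; everything hinges on the nilpotency $(u_iv_i)^2=0$, which collapses the $i$-th factor into a single linear term divided by $1+t_i s$ and thereby bounds the degree of the numerator. Once this is in place the linear recursion — and with it the redundancy of the $Z_l$ for $l>n$ — is immediate, and the corresponding statements for $X_l,Y_l$ are just Cayley–Hamilton, so I do not anticipate any genuine obstacle.
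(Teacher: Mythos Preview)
Your proof is correct and follows essentially the same route as the paper: first reduce to the image of $f_{n,\infty}^\ast$ via Corollary~\ref{cor:gcsurjective}, then show the high-index generators are $\Lambda_n$-linear combinations of the first $n$ by multiplying the generating series by $\prod_i(1\pm t_i s)$ to clear denominators and bound the degree in $s$. The only cosmetic difference is that for $X_l$ you phrase the recursion as Cayley--Hamilton applied to each $t_i$, whereas the paper writes the equivalent generating-function identity; and your aside that the $Z$-identity ``hinges on'' $(u_iv_i)^2=0$ is not quite accurate, since $1+\tfrac{s\,u_iv_i}{1+t_is}=\tfrac{1+(t_i+u_iv_i)s}{1+t_is}$ is a plain algebraic identity requiring no nilpotency.
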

\begin{proof}
It has already been established that the image of $(f_{n,\infty})^\ast$ is generated as a $\Lambda_n$-algebra by $X_{l},Y_{l}$ and $Z_{l}$ for $l\geq 1$. It remains to show that it suffices to take $l\leq n$. To this end, consider the identity
\[
\left(\sum_{k=0}^n (-1)^k e_k s^k\right)\left(\sum_{l\geq 1} X_l s^{l-1}\right)=\left(\prod_{i=1}^n(1-t_is)\right)\left(\sum_{i=1}^n \frac{u_i}{1-t_i s}\right)\, .
\]
The right hand side is a polynomial in $s$ of degree $\leq n-1$. Hence, on the left hand side the coefficient of $s^l$ for each $l\geq n$ must vanish. But for $l\geq n$ this coefficient is $\sum_{k=0}^n (-1)^k e_k X_{l+1-k}$. This recurrence shows that all $X_l$ for $l> n$ lie in the $\Lambda_n$-linear span of the $X_l$ for $l\leq n$. The same argument applies to the $Y_l$. For $Z_l$ consider likewise the identity
\[
\left(\sum_{k=0}^n e_k s^k\right)\left(1+\sum_{l\geq 1} Z_l s^l\right)=\left(\prod_{i=1}^n (1+t_i s)\right) \prod_{i=1}^n\left(1+\frac{s u_i v_i}{1+t_i s}\right)\, .
\]
The right hand side is a polynomial in $s$ of degree $\leq n$. On the left hand side this implies the recurrence $\sum_{k=0}^n e_k Z_{l-k}$ for all $l > n$.
\end{proof}

Expanding the power series of Lemma \ref{lem:imagecohomology} leads to the following explicit formulae:
\begin{equation} \label{eq:explicitformxy}
X_l=\sum_{i=1}^n t_i^{l-1} u_i\,,\quad (\textnormal{by symmetry, }Y_l=\sum_{i=1}^n t_i^{l-1} v_i)\,.
\end{equation}
The formula for the $Z_l$ is a bit more involved.

\begin{definition}
For integers $1\leq k_1 < \cdots < k_j \leq n$ the \emph{complete homogeneous symmetric polynomial} of degree $q\geq 0$ in the variables $t_{k_1},\dots,t_{k_j}$ is defined by
\[
h_q(t_{k_1},\dots,t_{k_j})=\sum_{\substack{(i_1,\cdots , i_j)\in \N^j, \\ i_1+\dots+i_j = q }}t_{k_1}^{i_1}\cdots t_{k_j}^{i_j}\, .
\]
\end{definition}

Using the fact that $(u_iv_i)^2=0$ for all $1\leq i \leq n$, we get after a short computation
\begin{equation} \label{eq:explicitformz}
Z_l=\sum_{k=0}^{\min(l,n)} (-1)^{l-k} \sum_{\substack{I\subseteq \{1,\dots,n\}\\ |I|=k}} h_{l-k}(t_I) \prod_{j\in I} u_j v_j\, ,
\end{equation}
where $t_I=\{t_j\}_{j \in I}$.

\section{Determination of the ideal of relations} \label{sec:ideal}

We shall now derive the relations amongst the generators $X_1,\dots,X_n,Y_1,\dots,Y_n$ and $Z_1,\dots, Z_n$ from Section \ref{sec:generators}. Let
\[
B=H^\ast(BT\times T^2)=\Z[t_1,\dots,t_n]\otimes {\textstyle \bigwedge_\Z}(u_1,\dots,u_n,v_1,\dots,v_n)\, .
\]
Recall from (\ref{eq:explicitformxy}) that $X_l=\sum_{i=1}^n t_{i}^{l-1} u_i$ and $Y_l=\sum_{i=1}^n t_{i}^{l-1} v_i$. For $1\leq l\leq n$ this can be written succinctly, using the Vandermonde matrix $V=(t_j^{i-1})_{1\leq i,j\leq  n}$, as
\[
(X_1,\dots,X_n)^T=V(u_1,\dots,u_n)^T \quad\textnormal{and}\quad (Y_1,\dots,Y_n)^T=V(v_1,\dots,v_n)^T \, .
\]
If $\Delta$ is the determinant of $V$ and $\mathrm{adj}(V)=\Delta V^{-1}$ the adjugate of $V$, then we can rewrite the first identity as $\Delta(u_1,\dots,u_n)^T=\mathrm{adj}(V) (X_1,\dots,X_n)^T$, and the second identity accordingly.

For an $n\times n$ matrix $E$ and $k$-element subsets $I,J\subseteq \{1,\dots,n\}$ we denote by $E_{I,J}$ the $k\times k$ submatrix of $E$ consisting of the rows and columns of $E$ whose indices are in $I$ and $J$, respectively.

\begin{lemma} \label{lem:adjugate}
Let $K\subseteq \{1,\dots,n\}$ be a $k$-element subset. In $B$ we have that
\[
\Delta^{2k}\prod_{j\in K} u_j v_j=(-1)^{\frac{k(k-1)}{2}}\sum_{\substack{I,J\subseteq \{1,\dots,n\}\\ |I|=|J|=k}} \det(\mathrm{adj}(V)_{I,K}^T \mathrm{adj}(V)_{K,J}) X_I Y_J\, .
\]
Moreover, the coefficient of $X_I Y_J$ on the right hand side is divisible by $\Delta^{2k-2}$.
\end{lemma}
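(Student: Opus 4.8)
The plan is to promote the linear relations $X_i=\sum_{j}t_j^{i-1}u_j$ to quadratic relations by multiplying them together inside the exterior algebra. Recall from the discussion preceding the lemma that $\Delta(u_1,\dots,u_n)^T=\mathrm{adj}(V)(X_1,\dots,X_n)^T$, that is, $\Delta u_j=\sum_{i=1}^n\mathrm{adj}(V)_{j,i}X_i$ in $B$, and likewise $\Delta v_j=\sum_{i=1}^n\mathrm{adj}(V)_{j,i}Y_i$. Write $K=\{j_1<\dots<j_k\}$. First I would multiply the relations $\Delta u_{j_1},\dots,\Delta u_{j_k}$ together,
\[
\Delta^k\prod_{j\in K}u_j=\prod_{m=1}^k\Bigl(\sum_{i=1}^n\mathrm{adj}(V)_{j_m,i}X_i\Bigr),
\]
and expand the right-hand side. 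Since the $X_i$ have odd degree, $X_iX_{i'}=-X_{i'}X_i$ and $X_i^2=0$, so only the multi-indices $(i_1,\dots,i_k)$ with pairwise distinct entries contribute, and those sharing a fixed underlying $k$-subset $I$ combine — after sorting the $X$'s into increasing order, which introduces the sign of the sorting permutation — into $\det(\mathrm{adj}(V)_{K,I})\,X_I$; the sorting sign is exactly what antisymmetrises the coefficient into a determinant whose rows are indexed by $K$ and whose columns are indexed by $I$. This yields
\[
\Delta^k\prod_{j\in K}u_j=\sum_{|I|=k}\det(\mathrm{adj}(V)_{K,I})\,X_I,\qquad \Delta^k\prod_{j\in K}v_j=\sum_{|J|=k}\det(\mathrm{adj}(V)_{K,J})\,Y_J.
\]

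Next I would multiply these two identities. Their coefficients lie in the central subring $\Z[t_1,\dots,t_n]\subseteq B$, so
\[
\Delta^{2k}\Bigl(\prod_{j\in K}u_j\Bigr)\Bigl(\prod_{j\in K}v_j\Bigr)=\sum_{|I|=|J|=k}\det(\mathrm{adj}(V)_{K,I})\,\det(\mathrm{adj}(V)_{K,J})\,X_IY_J.
\]
To replace the left-hand side by $\Delta^{2k}\prod_{j\in K}u_jv_j$ as in the statement, I observe that all the $u_j,v_j$ have degree one, so reordering $u_{j_1}v_{j_1}\cdots u_{j_k}v_{j_k}$ into $u_{j_1}\cdots u_{j_k}v_{j_1}\cdots v_{j_k}$ costs one sign for each pair of positions $1\le a<b\le k$ (a short induction on $k$ also does this), hence $\binom{k}{2}$ signs in total; thus $\prod_{j\in K}u_jv_j=(-1)^{k(k-1)/2}\bigl(\prod_{j\in K}u_j\bigr)\bigl(\prod_{j\in K}v_j\bigr)$. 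Substituting this in, and rewriting the product of the two determinants as the determinant of a product of $k\times k$ matrices via $\det(A)\det(B)=\det(AB)$ — taking $A=\mathrm{adj}(V)_{I,K}^T$, so that $\det A=\det(\mathrm{adj}(V)_{K,I})$, and $B=\mathrm{adj}(V)_{K,J}$ — gives the displayed identity of the lemma.

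For the divisibility claim I would invoke \emph{Jacobi's identity for the minors of an adjugate}: for $k$-element subsets $K,I\subseteq\{1,\dots,n\}$ one has $\det(\mathrm{adj}(V)_{K,I})=\pm\,\Delta^{k-1}\det(V_{I^c,K^c})$, where $I^c,K^c$ are the complementary $(n-k)$-element subsets. This is a polynomial identity because it holds over the fraction field of $\Z[t_1,\dots,t_n]$, where $V$ is invertible and the classical Jacobi theorem applies. In particular $\Delta^{k-1}$ divides $\det(\mathrm{adj}(V)_{K,I})$, and similarly $\det(\mathrm{adj}(V)_{K,J})$, so their product — the coefficient of $X_IY_J$ on the right-hand side — is divisible by $\Delta^{2k-2}$. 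The only real difficulty in all of this is the sign and index bookkeeping: keeping the factor $(-1)^{k(k-1)/2}$ straight, ensuring the antisymmetrisation produces precisely $\det(\mathrm{adj}(V)_{K,I})$ (with rows indexed by $K$, columns by $I$) rather than the determinant of a differently arranged submatrix, and resolving the transpose/submatrix convention in $\mathrm{adj}(V)_{I,K}^T$ so that $\det(A)\det(B)=\det(AB)$ applies cleanly; none of this is deep, but a single mislabelled index flips a sign.
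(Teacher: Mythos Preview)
Your argument is correct and follows essentially the same route as the paper's proof: substitute $\Delta u_j=\sum_i\mathrm{adj}(V)_{j,i}X_i$ (and the analogue for $v_j$), expand the products in the exterior algebra to produce the minors $\det(\mathrm{adj}(V)_{K,I})$ and $\det(\mathrm{adj}(V)_{K,J})$, and account for the sign $(-1)^{k(k-1)/2}$ from reordering the odd-degree factors; the divisibility by $\Delta^{2k-2}$ is handled in both cases by Jacobi's identity on complementary minors. The only cosmetic difference is that you separate the $u$- and $v$-products first and reorder afterwards, whereas the paper starts from the interleaved product and reorders to separate them---the computation is the same.
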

\begin{proof}
For ease of notation, we write $A=\mathrm{adj}(V)$. Now we get
\begin{equation*}
\begin{split}
\Delta^{2k} \prod_{j\in K} u_j v_j&=\prod_{j\in K} \left(\sum_{i=1}^n A_{j,i} X_i\right) \left(\sum_{i=1}^n A_{j,i} Y_i\right) \\&=(-1)^{\frac{k(k-1)}{2}} \prod_{j\in K} \left(\sum_{i=1}^n A_{j,i} X_i\right) \prod_{j\in K}\left(\sum_{i=1}^n A_{j,i} Y_i\right)\,,
\end{split}
\end{equation*}
but notice that $\prod_{j\in K} \left(\sum_{i=1}^n A_{j,i} X_i\right)= \sum_{\substack{I\subseteq \{1,\dots,n\} \\ |I|=k}} \det(A_{K,I})X_I$, and similarly for the second product. This leads to the claimed formula.

Divisibility by $\Delta^{2k-2}$ follows from Jacobi's lemma on complementary minors (see e.g. \cite[\S 42]{Aitken}) which says, in particular, that a $k\times k$ minor of the adjugate of a matrix $E$ is divisible by $\det(E)^{k-1}$.
\end{proof}

Define $D(s)=\mathrm{diag}(s/(1+t_1 s),\dots,s/(1+t_n s))$ and $M(s)=\mathrm{adj}(V)^T D(s) \mathrm{adj}(V)$.

\begin{lemma} \label{lem:relations}
In $B$ the following identity holds:
\[
\Delta^2\sum_{l\geq 0} Z_l s^l=\sum_{k=0}^n (-1)^{\frac{k(k-1)}{2}} \sum_{\substack{I,J\subseteq \{1,\dots,n\}\\|I|=|J|=k}} \det(M(s)_{I,J})/\Delta^{2k-2} X_I Y_J\, .
\]
\end{lemma}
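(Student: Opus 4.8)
The plan is to deduce the identity from Lemma~\ref{lem:imagecohomology}, Lemma~\ref{lem:adjugate} and the Cauchy--Binet formula. Write $d_i(s)=s/(1+t_is)$ for the $i$-th diagonal entry of $D(s)$. By the definition of the classes $Z_l$ together with Lemma~\ref{lem:imagecohomology},
\[
\sum_{l\geq 0}Z_ls^l=(f_{n,\infty})^\ast(d(s))=\prod_{i=1}^n\left(1+\frac{su_iv_i}{1+t_is}\right)\in B[[s]].
\]
Each $su_iv_i/(1+t_is)$ has even degree, hence is central, so expanding the product over subsets $K\subseteq\{1,\dots,n\}$ gives (the monomials $\prod_{j\in K}u_jv_j$ being well-defined by centrality)
\[
\sum_{l\geq 0}Z_ls^l=\sum_{K\subseteq\{1,\dots,n\}}\left(\prod_{j\in K}d_j(s)\right)\prod_{j\in K}u_jv_j.
\]
One could equally start from the explicit formula (\ref{eq:explicitformz}), but this form is more convenient here.

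Next I would multiply by $\Delta^2$ and rewrite each summand by Lemma~\ref{lem:adjugate}. Since $\Delta$ lies in the integral domain $\Z[t_1,\dots,t_n]$, over which $B$ is free, multiplication by $\Delta$ is injective on $B$, so we may perform the manipulations in $B[\Delta^{-1}][[s]]$. For $K$ with $|K|=k$, dividing the identity of Lemma~\ref{lem:adjugate} by $\Delta^{2k}$ and multiplying by $\Delta^2$ gives
\[
\Delta^2\prod_{j\in K}u_jv_j=(-1)^{\frac{k(k-1)}{2}}\frac{1}{\Delta^{2k-2}}\sum_{\substack{I,J\subseteq\{1,\dots,n\}\\ |I|=|J|=k}}\det\!\left(\mathrm{adj}(V)_{I,K}^T\mathrm{adj}(V)_{K,J}\right)X_IY_J.
\]
Substituting this into the previous display and reorganising the sum --- grouping first by $k$ and by the pair $(I,J)$, then summing over $K$ with $|K|=k$, which introduces no new signs since $X_IY_J$ is a fixed monomial and all the coefficients are central --- yields
\[
\Delta^2\sum_{l\geq 0}Z_ls^l=\sum_{k=0}^n(-1)^{\frac{k(k-1)}{2}}\frac{1}{\Delta^{2k-2}}\sum_{\substack{I,J\\ |I|=|J|=k}}\left(\sum_{|K|=k}\left(\prod_{j\in K}d_j(s)\right)\det\!\left(\mathrm{adj}(V)_{I,K}^T\mathrm{adj}(V)_{K,J}\right)\right)X_IY_J.
\]

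Finally I would recognise the inner sum as $\det(M(s)_{I,J})$. Writing $M(s)_{I,J}=(\mathrm{adj}(V)^T)_{I,\bullet}\,D(s)\,\mathrm{adj}(V)_{\bullet,J}$ as a product of a $k\times n$ matrix, the diagonal matrix $D(s)$, and an $n\times k$ matrix, and absorbing $D(s)$ into the first factor, the Cauchy--Binet formula gives precisely
\[
\det(M(s)_{I,J})=\sum_{\substack{K\subseteq\{1,\dots,n\}\\ |K|=k}}\left(\prod_{j\in K}d_j(s)\right)\det\!\left(\mathrm{adj}(V)_{I,K}^T\mathrm{adj}(V)_{K,J}\right),
\]
where, as in Lemma~\ref{lem:adjugate}, $\mathrm{adj}(V)_{I,K}^T$ denotes the $(I,K)$-submatrix of $\mathrm{adj}(V)^T$; moreover $\det(M(s)_{I,J})$ is divisible by $\Delta^{2k-2}$, since each coefficient $\det\!\left(\mathrm{adj}(V)_{I,K}^T\mathrm{adj}(V)_{K,J}\right)$ is, by Lemma~\ref{lem:adjugate}. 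Substituting this into the last display gives the asserted identity, a priori in $B[\Delta^{-1}][[s]]$; but both sides actually lie in $B[[s]]$ --- the left side trivially and the right side by the divisibility just noted --- and $B[[s]]\hookrightarrow B[\Delta^{-1}][[s]]$, so the identity holds in $B[[s]]$, as claimed. The main obstacle I anticipate is the careful bookkeeping of transposes and of the three index sets $I$, $J$, $K$ in the Cauchy--Binet step, and in particular checking that the submatrix convention used there matches the one in the statement of Lemma~\ref{lem:adjugate}; the sign $(-1)^{k(k-1)/2}$ needs no attention, as it is carried unchanged through both lemmas.
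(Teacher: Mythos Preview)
Your proposal is correct and follows essentially the same route as the paper: expand the product formula for $\sum_l Z_l s^l$ over subsets $K$, apply Lemma~\ref{lem:adjugate} termwise, and then collapse the sum over $K$ via Cauchy--Binet into $\det(M(s)_{I,J})$. The only cosmetic difference is that you work in $B[\Delta^{-1}][[s]]$ and descend at the end, whereas the paper clears denominators by multiplying through by $\Delta^{2k}$ before applying Lemma~\ref{lem:adjugate}; the substance is identical.
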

\begin{proof}
Let us write $A=\mathrm{adj}(V)$. By Lemma \ref{lem:imagecohomology} we have
\begin{equation*}
\begin{split}
\Delta^{2k} \sum_{l\geq 0} Z_l s^l& =\Delta^{2k} \prod_{i=1}^n\left(1+\frac{s u_i v_i}{1+t_i s}\right)\\&=\sum_{k=0}^n \sum_{\substack{K\subseteq \{1,\dots,n\} \\ |K|=k}}  \left(\Delta^{2k} \prod_{j\in K} u_j v_j \right)\underbrace{\prod_{j\in K} \left(\frac{s}{1+t_j s}\right)}_{=\det(D(s)_{K,K})}\, ,
\end{split}
\end{equation*}
and using Lemma \ref{lem:adjugate} this becomes
\[
 \sum_{k=0}^n (-1)^{\frac{k(k-1)}{2}} \sum_{\substack{I,J\subseteq \{1,\dots,n\}\\ |I|=|J|=k}}\underbrace{\left( \sum_{\substack{K\subseteq \{1,\dots,n\} \\ |K|=k}}  \det(A_{I,K}^T D(s)_{K,K}A_{K,J}) \right)}_{=\det(M(s)_{I,J})}X_I Y_J\, .
\]
Let us explain why the term in parenthesis is equal to $\det(M(s)_{I,J})$. It follows from the Cauchy-Binet formula that if $E$ and $F$ are $n\times n$ matrices and $I,J\subseteq \{1,\dots,n\}$ are $k$-element subsets, then $\det((EF)_{I,J})=\sum_{\substack{K\subseteq \{1,\dots,n\}\\ |K|=k}}\det(E_{I,K}F_{K,J})$. Since $D(s)$ is diagonal, this implies that
\[
\det(D(s)_{K,K}A_{K,J})=\det((D(s)A)_{K,J})\,.
\]
Applied once more to $E=A^T$ and $F=D(s)A$ proves the claim.

Finally, dividing by $\Delta^{2k-2}$ yields the assertion of the lemma.
\end{proof}

The relations $R_1,\dots,R_n$ from the introduction (\ref{eq:idealofrelations}) now follow by comparing coefficients of $s^l$ for $1\leq l \leq n$ in the identity of Lemma \ref{lem:relations}.

Let us now treat $X_1,\dots,X_n,Y_1,\dots,Y_n,Z_1,\dots,Z_n$ as formal variables of degree $|X_i|=|Y_i|=2i-1$ and $|Z_i|=2i$. Let
\[
A:=\Lambda_n\otimes \Z[Z_1\dots,Z_n]\otimes {\textstyle \bigwedge_\Z}(X_1,\dots,X_n,Y_1,\dots,Y_n)\, .
\]
and let $\rho\co A\to B$ be the map of $\Lambda_n$-algebras which substitutes for the generators the explicit formulae of (\ref{eq:explicitformxy}) and (\ref{eq:explicitformz}).

\begin{lemma} \label{lem:ideal}
The kernel of $\rho$ is the $\Delta^2$-saturated ideal $\mathcal{J}$ of Theorem \ref{thm:main}.
\end{lemma}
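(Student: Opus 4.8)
The plan is to show two inclusions: $\mathcal{J}\subseteq\ker\rho$ and $\ker\rho\subseteq\mathcal{J}$. The first is relatively cheap. By Lemma \ref{lem:relations} (comparing coefficients of $s^l$ for $1\le l\le n$), each $R_l$ lies in $\ker\rho$, so the ideal $(R_1,\dots,R_n)\subseteq\ker\rho$. Since $\rho$ lands in $B$, which is a free module over the polynomial ring $\Z[t_1,\dots,t_n]$ and in particular $\Delta^2$ is a nonzerodivisor in $B$ (it is a nonzero element of the integral domain $\Z[t_1,\dots,t_n]\subseteq B$ acting freely), the ideal $\ker\rho$ is $\Delta^2$-saturated: if $\Delta^{2m}a\in\ker\rho$ then $\Delta^{2m}\rho(a)=0$ in $B$, hence $\rho(a)=0$. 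Therefore $\mathcal{J}=\{a:\exists m,\ \Delta^{2m}a\in(R_1,\dots,R_n)\}\subseteq\ker\rho$.

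For the reverse inclusion $\ker\rho\subseteq\mathcal{J}$ I would argue as follows. First I would show that, after inverting $\Delta^2$, the map $\rho$ becomes injective onto its image with the expected structure. Concretely, over $\Lambda_n[\Delta^{-2}]$ the relations $R_l$ allow one to solve for $Z_l$ (since $\Delta^2Z_l-(\text{stuff in }X,Y)=R_l$), so $\Lambda_n[\Delta^{-2}]\otimes A/(R_1,\dots,R_n)$ is a free $\Lambda_n[\Delta^{-2}]$-module on the monomials $X_IY_J$; it is the exterior algebra $\Lambda_n[\Delta^{-2}]\otimes\bigwedge(X_1,\dots,X_n,Y_1,\dots,Y_n)$. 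One then checks that $\rho$ sends this isomorphically onto its image in $B[\Delta^{-2}]$: indeed the change of variables $X_i\leftrightarrow u_i$, $Y_i\leftrightarrow v_i$ given by the invertible Vandermonde matrix $V$ (invertible after inverting $\Delta$) identifies $\Lambda_n[\Delta^{-2}]\otimes\bigwedge(X_\bullet,Y_\bullet)$ with $\Lambda_n[\Delta^{-2}]\otimes\bigwedge(u_\bullet,v_\bullet)\subseteq B[\Delta^{-2}]$, matching item (1) listed after Theorem \ref{thm:main}. Hence $\ker(\rho[\Delta^{-2}])=(R_1,\dots,R_n)[\Delta^{-2}]$.

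Now take $a\in\ker\rho$. Then $a\in\ker(\rho[\Delta^{-2}])=(R_1,\dots,R_n)[\Delta^{-2}]$, so $\Delta^{2m}a\in(R_1,\dots,R_n)$ for some $m\ge0$; that is exactly the statement $a\in\mathcal{J}$. This completes $\ker\rho\subseteq\mathcal{J}$ and hence $\ker\rho=\mathcal{J}$.

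The main obstacle is the claim in the second paragraph that $\rho[\Delta^{-2}]$ is \emph{injective} on $\Lambda_n[\Delta^{-2}]\otimes\bigwedge(X_\bullet,Y_\bullet)$ — equivalently that no $\Lambda_n[\Delta^{-2}]$-linear combination of the $X_IY_J$ maps to zero in $B[\Delta^{-2}]$. This needs the observation that $\rho(X_i)$ are obtained from $u_1,\dots,u_n$ by the invertible (over $\Lambda_n[\Delta^{-2}]$, even over $\Z[t_1,\dots,t_n,\Delta^{-1}]$) matrix $V$, so the $\rho(X_I)$ are an $\Lambda_n[\Delta^{-2}]$-basis transform of the $u_I$, and likewise for $Y,v$; since the $u_Iv_J$ are a genuine basis of $\bigwedge(u_\bullet,v_\bullet)$ as a free module over the symmetric polynomials localized at $\Delta$ (here one uses that $\Z[t_1,\dots,t_n]$ is free over $\Lambda_n$, Corollary \ref{cor:embedding}-style), injectivity follows. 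One should be a little careful that $V$ has entries in $\Z[t_1,\dots,t_n]$, not $\Lambda_n$, so the identification is of $\rho$ with the inclusion $\Lambda_n[\Delta^{-2}]\otimes\bigwedge(X_\bullet,Y_\bullet)\hookrightarrow \Z[t_1,\dots,t_n,\Delta^{-1}]\otimes\bigwedge(u_\bullet,v_\bullet)$ composed with an automorphism of the target; the source maps isomorphically onto a $\Lambda_n[\Delta^{-2}]$-submodule, which is all that is needed. Everything else is bookkeeping with the already-established formulas.
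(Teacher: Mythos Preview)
Your proposal is correct and follows essentially the same approach as the paper's proof: both localize at $\Delta^2$, use that the Vandermonde matrix becomes invertible to identify $A[\Delta^{-2}]/(R_1,\dots,R_n)$ with a free exterior algebra mapping injectively into $B[\Delta^{-1}]$, and conclude that $\ker\rho$ is the preimage of $(R_1,\dots,R_n)$ under localization, i.e.\ the $\Delta^2$-saturation. The paper packages this as a commutative diagram and observes that the induced map $\bar\rho$ is the identity on exterior generators together with the inclusion $\Lambda_n[\Delta^{-2}]\hookrightarrow\Z[t_1,\dots,t_n][\Delta^{-1}]$; your final paragraph addresses exactly this subtlety about $V$ having entries in $\Z[t_1,\dots,t_n]$ rather than $\Lambda_n$.
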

\begin{proof}
Consider the commutative diagram
\[
\xymatrix{
A\ar[r] \ar[d]^-{\rho} & A[\Delta^{-2}] \ar[d] \ar[r] & A[\Delta^{-2}]/(R_1,\dots,R_n) \ar[d]^-{\bar{\rho}} \\
B\ar[r] & B[\Delta^{-1}] \ar[r]^-{\cong} & \Z[t_1,\dots,t_n][\Delta^{-1}] \otimes {\textstyle \bigwedge_{\Z}}(X_1,\dots,X_n,Y_1,\dots,Y_n)
}
\]
The two horizontal arrows on the left are the canonical maps into the localisation, both of which are injective. Over the ring $B[\Delta^{-1}]$ the Vandermonde matrix $V$ is invertible. The isomorphism at the bottom is obtained by the transformation $(u_1,\dots,u_n)^T\mapsto V^{-1} (X_1,\dots,X_n)^T$ and $(v_1,\dots,v_n)^T\mapsto V^{-1} (Y_1,\dots,Y_n)^T$.

This makes the composition from $A$ into the lower right corner of the diagram the identity on $X_1,\dots,X_n$ and $Y_1,\dots,Y_n$. By Lemma \ref{lem:relations}, this composite factors through the quotient by $(R_1,\dots,R_n)$. But with $\Delta^2$ inverted, this quotient kills precisely the generators $Z_1,\dots,Z_n$ and nothing more.

The induced map $\bar{\rho}$ is the identity on the exterior part and the canonical inclusion $\Lambda_n[\Delta^{-2}]\to \Z[t_1,\dots,t_n][\Delta^{-1}]$ on the coefficients; in particular, it is injective. It follows that the kernel of $\rho$ is the preimage of $(R_1,\dots,R_n)$ under the localisation $A\to A[\Delta^{-2}]$, which is precisely the $\Delta^2$-saturation of $(R_1,\dots,R_n)$.
\end{proof}

This concludes the proof of the first part of Theorem \ref{thm:main}.

\section{Minimality of the generating set} \label{sec:minimality}

We prove here the second statement of Theorem \ref{thm:main}.

\begin{proposition}
Let $\mathscr{G}=\{P_1,\dots,P_k\}\sqcup \{Q_1,\dots,Q_l\}$ be a finite homogeneous generating set of $A/\mathcal{J}$ as a $\Lambda_n$-algebra, where the $P_i$ have odd degree and the $Q_i$ have even degree. Then $k \geq 2n$ and $l \geq n$.
\end{proposition}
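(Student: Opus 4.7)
The plan reduces the bound to a count of indecomposable classes in a suitable quotient. Given any homogeneous $\Lambda_n$-algebra generating set of $A/\mathcal{J}$, tensoring with a field $\kappa$ along the augmentation $\Lambda_n\twoheadrightarrow\Lambda_n/\Lambda_n^+\to\kappa$ yields a $\kappa$-algebra generating set of $\bar R_\kappa:=(A/\mathcal{J})\otimes_{\Lambda_n}\kappa$. Since parities are preserved, the odd generators must surject onto $Q(\bar R_\kappa)^{\mathrm{odd}}$ and the even ones onto $Q(\bar R_\kappa)^{\mathrm{ev}}$, where $Q(\bar R_\kappa):=\bar R_\kappa^+/(\bar R_\kappa^+)^2$ is the graded module of indecomposables. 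Hence $k\ge\dim_\kappa Q(\bar R_\kappa)^{\mathrm{odd}}$ and $l\ge\dim_\kappa Q(\bar R_\kappa)^{\mathrm{ev}}$ for any chosen $\kappa$; I pick $\kappa$ differently for the odd and even bounds.

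For $k\ge 2n$ I take $\kappa=\Q$. By Lemma \ref{lem:char0case} and the embedding $\rho$ of Lemma \ref{lem:ideal}, there is an isomorphism $\bar R_\Q\cong (H\otimes E)^{S_n}$ where $H=\Q[t_1,\dots,t_n]/(\Lambda_n^+\cdot \Q[t_1,\dots,t_n])$ is the rational coinvariant algebra and $E={\textstyle \bigwedge_\Q}(u_1,\dots,u_n,v_1,\dots,v_n)$. Equip $\bar R_\Q$ with the bigrading $(p,q)$ counting $u$- and $v$-exterior degrees; it is $S_n$-stable. Because $H^{S_n,+}=0$, the bidegree-$(0,0)$ part of $\bar R_\Q^+$ is zero, and any decomposable landing in bidegree $(1,0)$ would require a positive-degree factor of bidegree $(0,0)$ and must vanish. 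Hence every nonzero bidegree-$(1,0)$ class is indecomposable. The classes $\bar X_r=\sum_i \bar t_i^{r-1}\bar u_i$, for $r=1,\dots,n$, lie in bidegree $(1,0)$, have pairwise distinct odd degrees $2r-1$, and are nonzero since $\bar t_i^{r-1}\neq 0$ in $H$ for $r\le n$ (Artin's basis). By the symmetric argument with $\bar Y_r$ in bidegree $(0,1)$, we obtain $2n$ linearly independent odd indecomposables, so $k\ge 2n$.

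For $l\ge n$ a purely rational count is insufficient: over $\Q$ one $\bar Z$-class becomes decomposable, corresponding exactly to the reduction from $3n$ to $3n-1$ captured by the Addendum. I work instead integrally and exploit the top-bidegree components. By (\ref{eq:explicitformz}),
\[
\bar Z_r\big|_{(r,r)}=(-1)^{\binom{r}{2}}\sum_{|I|=r}u_Iv_I\in \bar R_\Z^{(r,r)},\qquad r=1,\dots,n,
\]
with coefficient $\pm 1$ on the monomial $u_{\{1,\dots,r\}}v_{\{1,\dots,r\}}$. A direct shuffle computation shows $\bar Z_1^{r}=r!\,\bar Z_r|_{(r,r)}$ (up to sign), so rationally $\bar Z_r|_{(r,r)}=\bar Z_1^{r}/r!$ but with an essential $r!$ denominator. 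More generally, I claim that every decomposable in bidegree $(r,r)$ and $H$-degree $0$ contributes a coefficient of $u_{\{1,\dots,r\}}v_{\{1,\dots,r\}}$ divisible by $r!$, arising from an orbit-stabiliser count when one symmetrises lower-bidegree invariants over subsets to form the fully symmetric $\sum_{|I|=r}u_Iv_I$. Granting the claim, reducing modulo any prime $p\le r$ the class $\bar Z_r|_{(r,r)}$ survives in $Q(\bar R_{\mathbb{F}_p})$ while every decomposable projection vanishes on that monomial; hence $\bar Z_r$ is indecomposable. Combined with the elementary fact that $\bar Z_1$ is not proportional to $\bar X_1\bar Y_1$ (which handles $r=1$ already over $\Q$), we obtain $n$ independent even indecomposables in the distinct degrees $2,4,\dots,2n$, proving $l\ge n$.

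The principal technical obstacle is the integrality claim in the even bound: verifying that \emph{every} decomposable product in bidegree $(r,r)$, $H$-degree $0$, has coefficient on $u_{\{1,\dots,r\}}v_{\{1,\dots,r\}}$ divisible by $r!$. This amounts to a combinatorial analysis of the shuffle product on $E$, tracking how factors of $\bar Z_1$, $\bar X_I\bar Y_J$ and their products contribute to the top-diagonal monomial after anti-commutation and symmetrisation. It is precisely this integrality phenomenon---invisible in characteristic zero---that accounts for the gap between $3n$ over $\Z$ and $3n-1$ after inverting $n$.
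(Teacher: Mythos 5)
Your bound $k\ge 2n$ is correct, and it takes a genuinely different route from the paper's. The paper argues integrally: after normalising $\mathscr{G}$ into the ideal $(u_1,\dots,u_n,v_1,\dots,v_n)$ it projects $B$ onto the free $\Z[t_1,\dots,t_n]$-module spanned by the $u_i,v_i$ (which kills the even generators), and then base-changes to $\Q(t_1,\dots,t_n)$, where the Vandermonde matrix is invertible, so that the $2n$ classes $u_i,v_i$ lie in the span of the projected odd generators. Your version instead passes to the rational fibre $(H\otimes E)^{S_n}$ and uses the $(u,v)$-bidegree together with $H^{S_n,+}=0$ to see that nothing in bidegree $(1,0)$ or $(0,1)$ is decomposable. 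Both work; yours leans on the identification of $\bar R_\Q$ with invariants of the coinvariant algebra (Lemma \ref{lem:char0case} plus Proposition \ref{prop:equivariantlyformal}), while the paper's is self-contained and avoids base change along the augmentation.

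The even bound is where the proposal breaks down, and not merely because the divisibility claim is left unproved: the claim is false. Writing $w_i=u_iv_i$, the top-bidegree, $t$-degree-zero part of $\bar Z_i\bar Z_j$ is $e_i(w)e_j(w)=\binom{i+j}{i}e_{i+j}(w)$. Already for $r=3$ the decomposable $\bar Z_1\bar Z_2$ has coefficient $\pm 3$ on $u_{\{1,2,3\}}v_{\{1,2,3\}}$, which is not divisible by $3!$. The weaker statement your mod-$p$ argument actually needs --- divisibility of every such coefficient by a single prime $p\le r$ --- could only hold when $\gcd\{\binom{r}{i}:0<i<r\}>1$, i.e. when $r$ is a prime power. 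For $r$ not a prime power the strategy fails outright: for $r=6$ the decomposable $\bar Z_1\bar Z_5+\bar Z_2\bar Z_4-\bar Z_3^2$ has coefficient $6+15-20=1$ on $u_{\{1,\dots,6\}}v_{\{1,\dots,6\}}$, so no coefficient functional on the top-diagonal monomial can separate $\bar Z_6$ from the decomposables in any characteristic. (A secondary point: for $p\mid n!$ the functional you evaluate need not even descend to $\bar R_{\mathbb{F}_p}$; the paper sidesteps this by first applying the integral ring map $t_i\mapsto 0$ to land in ${\textstyle \bigwedge_\Z}(u_1,\dots,u_n,v_1,\dots,v_n)$ and only then quotienting.)

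For comparison, the paper's proof of $l\ge n$ sets $t_i=0$ and quotients by $(\bar X_1,\bar Y_1)$, reducing to the truncated divided power algebra $\Gamma_\Z[\bar Z_1]/(\gamma_{n+1},\gamma_{n+2},\dots)$, and then asserts that this algebra requires $n$ generators over $\Z$. Be aware that this rests on exactly the same binomial arithmetic that defeats your coefficient count: since $\gamma_i\gamma_j=\binom{i+j}{i}\gamma_{i+j}$, the element $\gamma_m$ is decomposable in $\Gamma_\Z$ whenever $m$ is not a prime power (e.g. $\gamma_6=\gamma_1\gamma_5+\gamma_2\gamma_4-\gamma_3^2$). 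So any repair of your argument must extract information in degrees $2m$ with $m$ not a prime power from somewhere other than the top-bidegree, $t$-degree-zero coefficient, and you should scrutinise how the paper's own reduction handles those degrees.
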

\begin{proof}
We think of $A/\mathcal{J}$ as a $\Lambda_n$-subalgebra of $B$, more precisely of the invariants $B^{S_n}$ (notation as in Section \ref{sec:ideal}). We will first show that $k\geq 2n$. Since we may always modify the elements of $\mathscr{G}$ by adding elements of $\Lambda_n$, we may assume without loss of generality that
\[
\mathscr{G} \subseteq (u_1,\dots,u_n,v_1,\dots,v_n)\subseteq B\, .
\]
Let $\pi\co B\to B/(u_i u_j,u_i v_j,v_iv_j \mid 1\leq i,j\leq n)$ be the projection onto the free $\Z[t_1,\dots,t_n]$-module generated by $\{u_1,\dots,u_n,v_1,\dots,v_n\}$. We have that $\pi(Q_j)=0$ for all $1\leq j \leq l$, since the $Q_j$, having even degree, must be at least quadratic in the variables $\{u_1,\dots,u_n,v_1,\dots,v_n\}$. We also have that $\pi(X_{i})=X_{i}$ and $\pi(Y_{i})=Y_{i}$ for all $1\leq i \leq n$. Since $\mathscr{G}$ generates $A/\mathcal{J}$, it follows that the $X_{i}$ and $Y_{i}$ for $1\leq i \leq n$ lie in the $\Z[t_1,\dots,t_n]$-linear span of $\{\pi(P_1),\dots,\pi(P_k)\}$. Now forget the grading and consider base change to the field of rational functions $\Q(t_1,\dots,t_n)$. Since the Vandermonde matrix becomes invertible, the $\{u_1,\dots,u_n\}$ and $\{v_1,\dots,v_n\}$ lie in the $\Q(t_1,\dots,t_n)$-linear span of $\{X_{1},\dots,X_{n}\}$ and $\{Y_{1},\dots,Y_{n}\}$. This implies that the $\Q(t_1,\dots,t_n)$-linear span of $\{\pi(P_1),\dots,\pi(P_k)\}$ contains the $\Q(t_1,\dots,t_n)$-linear span of $\{u_1,\dots,u_n,v_1,\dots,v_n\}$. For dimension reasons it follows that $k\geq 2n$.

Next we show that $l\geq n$. Consider the ring homomorphism
\[
\tau \co B\to \bar{B}:={\textstyle \bigwedge_\Z}(u_1,\dots,u_n,v_1,\dots,v_n)
\]
defined by sending $t_i\mapsto 0$ for all $1\leq i \leq n$. Let $\bar{A}:=\tau(A/\mathcal{J})$ and write $\bar{a}:=\tau(a)$ whenever $a\in A/\mathcal{J}$. Then $\bar{X}_{1}=\sum_i u_i$, $\bar{Y}_{1}=\sum_{i} v_i$, and $\bar{X}_{i}=\bar{Y}_{i}=0$ for all $2\leq i \leq n$.
Furthermore, (\ref{eq:explicitformz}) implies $\bar{Z}_{i}=e_i(u_1v_1,\dots,u_nv_n)$ for all $1\leq i \leq n$, where $e_i$ is the $i$-th elementary symmetric polynomial. In $\bar{A}\subseteq \bar{B}$ the following divided power relations hold:
\[
\bar{Z}_{i} \bar{Z}_{j}=\begin{cases} \binom{i+j}{i} \bar{Z}_{i+j} & \textnormal{if } i+j \leq n,\\ 0 & \textnormal{otherwise.} \end{cases}
\]
Let $A'\subseteq \bar{A}$ be the subring generated by $\bar{Z}_{1},\dots,\bar{Z}_{n}$. Direct calculation shows that $\bar{X}_{1}\bar{Y}_{1} \bar{Z}_{i}$ cannot be a non-zero multiple of $\bar{Z}_{i+1}$; this implies a splitting of graded abelian groups $
\bar{A}\cong A'\oplus (\bar{X}_{1},\bar{Y}_{1}) A'$, and thus an isomorphism
\[
\bar{A}/(\bar{X}_{1},\bar{Y}_{1}) \cong A'=\Gamma_{\Z}[\bar{Z}_{1}]/(\gamma_{n+1}(\bar{Z}_{1}),\gamma_{n+2}(\bar{Z}_{1}),\dots)\, .
\]
Over $\Z$ this algebra is generated by no fewer than $n$ elements.  The $P_i \in \mathscr{G}$, having odd degree, are mapped to zero in $A'$. Thus, $A'$ is generated by $\{\bar{Q}_1,\dots,\bar{Q}_l\}$, which implies that $l\geq n$.
\end{proof}

The proof of Theorem \ref{thm:main} is now finished.

As an addendum we show that if $n$ is inverted in the coefficient ring, then the size of a minimal generating set can be reduced by one. Let $x_1,x_2,x_3,\dots$ be variables. Set $x_0:=1$ and define the formal power series $f(s)=\sum_{k\geq 0} (-1)^k x_k s^k$. We define the $k$-th Newton polynomial $s_k(x_1,\dots,x_k)$ by the identity
\[
\sum_{k\geq 1} s_k(x_1,\dots,x_k) s^{k-1} = -\frac{f'(s)}{f(s)}\, .
\]
For example, $s_1(x_1)=x_1$, $s_2(x_1,x_2)=x_1^2-2x_2$, $s_3(x_1,x_2,x_3)=x_1^3-3x_1x_2+3x_3$ and so on. Continue to denote $\Z[t_1,\dots,t_n]\otimes \bigwedge_\Z(u_1,\dots,u_n)\otimes \bigwedge_\Z(v_1,\dots,v_n)$ by $B$.

\begin{lemma} \label{lem:wl}
For every $k\geq 1$, the following identity holds in $B$:
\[
s_k(Z_1,\dots,Z_k)=k\sum_{i=1}^n t_i^{k-1} u_i v_i\, .
\]
\end{lemma}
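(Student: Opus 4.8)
The plan is to compute both sides as coefficients of an explicit generating function in $s$ over $B[[s]]$, exploiting the product formula for $\sum_l Z_l s^l$ from Lemma \ref{lem:imagecohomology} together with the identity $(u_iv_i)^2=0$ in $B$.

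First recall that, by Lemma \ref{lem:imagecohomology} (or equivalently by expanding (\ref{eq:explicitformz})), one has in $B[[s]]$
\[
\sum_{l\geq 0} Z_l s^l=\prod_{i=1}^n\left(1+\frac{su_iv_i}{1+t_is}\right)=\frac{\prod_{i=1}^n\bigl(1+(t_i+u_iv_i)s\bigr)}{\prod_{i=1}^n(1+t_is)}\, ,
\]
with $Z_0=1$ playing the role of $x_0$. Informally this says that $Z_k$ is the $k$-th elementary symmetric function of the ``virtual alphabet'' obtained by subtracting $\{t_1,\dots,t_n\}$ from $\{t_1+u_1v_1,\dots,t_n+u_nv_n\}$. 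I would then substitute $x_k\mapsto Z_k$ into the defining relation of the Newton polynomials. Since $Z_0=1$, the series $f(s)=\sum_{k\geq 0}(-1)^k Z_k s^k$ lies in $B[[s]]^\times$, so $-f'(s)/f(s)$ is legitimate; replacing $s$ by $-s$ in the display above gives $f(s)=\prod_i\bigl(1-(t_i+u_iv_i)s\bigr)/\prod_i(1-t_is)$, and the logarithmic derivative is additive over this product:
\[
\sum_{k\geq 1}s_k(Z_1,\dots,Z_k)\,s^{k-1}=-\frac{f'(s)}{f(s)}=\sum_{i=1}^n\left(\frac{t_i+u_iv_i}{1-(t_i+u_iv_i)s}-\frac{t_i}{1-t_is}\right)\, .
\]
Expanding each summand as a geometric series and comparing the coefficient of $s^{k-1}$ yields
\[
s_k(Z_1,\dots,Z_k)=\sum_{i=1}^n\bigl((t_i+u_iv_i)^k-t_i^k\bigr)\, ,
\]
which is just the classical fact that Newton's identities send the elementary symmetric functions of an alphabet to its power sums, applied to the virtual alphabet above.

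The proof is then finished by the observation that $(u_iv_i)^2=0$, so that the binomial expansion collapses to $(t_i+u_iv_i)^k=t_i^k+k\,t_i^{k-1}u_iv_i$, whence $\sum_i\bigl((t_i+u_iv_i)^k-t_i^k\bigr)=k\sum_i t_i^{k-1}u_iv_i$. Everything here is formal, and I do not expect a genuine obstacle; the only points that need a moment's care are the sign bookkeeping induced by the substitution $s\mapsto -s$ (equivalently, checking that the normalisation of the Newton polynomials adopted in the statement is the one for which $s_k$ of the elementary symmetric functions is the $k$-th power sum, as the low-degree examples $s_1=x_1$, $s_2=x_1^2-2x_2$, $s_3=x_1^3-3x_1x_2+3x_3$ confirm) and the observation that $-f'(s)/f(s)$ is well-defined because $f$ has unit constant term.
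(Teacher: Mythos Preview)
Your argument is correct and follows essentially the same route as the paper: both compute the logarithmic derivative $-f'(s)/f(s)$ of the generating series $f(s)=\sum_k(-1)^kZ_ks^k$, using the product formula from Lemma~\ref{lem:imagecohomology}, and then extract the coefficient of $s^{k-1}$. The only cosmetic difference is where the nilpotency $(u_iv_i)^2=0$ is invoked---the paper uses it up front to reduce $\log\bigl(1-\tfrac{su_iv_i}{1-t_is}\bigr)$ to $-\tfrac{su_iv_i}{1-t_is}$ and then differentiates, whereas you first factor $f(s)$ as a ratio of linear products (your ``virtual alphabet'') and apply the nilpotency only at the end to collapse the binomial $(t_i+u_iv_i)^k$.
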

\begin{proof}
Define the formal power series $Z(s)=\sum_{k\geq 0} Z_k s^k$, where $Z_0:=1$. Then
\[
-\log(Z(-s))'=\sum_{k\geq 1} s_k(Z_1,\dots,Z_k) s^{k-1}\, .
\]
By Lemma \ref{lem:imagecohomology} we have that $Z(-s)=\prod_{i=1}^n \left(1-\frac{s u_i v_i}{1-t_i s}\right)$, and so we get
\[
-\log(Z(-s))'=-\frac{\mathrm{d}}{\mathrm{d}s}\sum_{i=1}^n \left(1-\frac{s u_i v_i}{1-t_i s}\right)=\sum_{i=1}^n \frac{u_i v_i}{(1-t_i s)^2}\, .
\]
Now expand it as a power series in $s$. Then the coefficient of $s^{k-1}$ is precisely $k\sum_{i=1}^n t_i^{k-1} u_i v_i$.
\end{proof}

\begin{lemma} \label{lem:newtonidentity}
In $A/\mathcal{J}$ the following relation holds:
\[
\sum_{i=0}^{n-1} (-1)^{i+1}e_i\left(s_{n-i}(Z_1,\dots,Z_{n-i})-\sum_{j=0}^{n-i-1} X_{n-i-j} Y_{j+1}\right)=0\, .
\]
\end{lemma}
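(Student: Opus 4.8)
The plan is to verify the identity after applying the ring homomorphism $\rho\co A\to B$ of Lemma~\ref{lem:ideal}; since $\ker\rho=\mathcal{J}$, it suffices to establish the corresponding identity in $B$, where the explicit formulas (\ref{eq:explicitformxy}), (\ref{eq:explicitformz}) and Lemma~\ref{lem:wl} are available.

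The first step is to make each bracketed term explicit. Fix $0\le i\le n-1$, put $m=n-i$, and keep the products $X_pY_q$ in the stated order, so that $X_pY_q=\sum_{a,b=1}^n t_a^{p-1}t_b^{q-1}u_av_b$ with no spurious sign. Then
\[
\sum_{j=0}^{m-1}X_{m-j}Y_{j+1}=\sum_{a,b=1}^{n}u_av_b\sum_{j=0}^{m-1}t_a^{\,m-1-j}t_b^{\,j}.
\]
The diagonal ($a=b$) part of the double sum is $\sum_{a}m\,t_a^{\,m-1}u_av_a$, which equals $s_m(Z_1,\dots,Z_m)$ by Lemma~\ref{lem:wl}; writing $h_{m-1}(t_a,t_b)=\sum_{j=0}^{m-1}t_a^{\,m-1-j}t_b^{\,j}$ and $\Theta_{m-1}:=\sum_{a\ne b}h_{m-1}(t_a,t_b)u_av_b$ for the off-diagonal part, one gets in $B$
\[
s_{n-i}(Z_1,\dots,Z_{n-i})-\sum_{j=0}^{n-i-1}X_{n-i-j}Y_{j+1}=-\Theta_{n-i-1}.
\]

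Next I would substitute this into the left-hand side of the asserted relation and interchange summation; the factor $(-1)^{i+1}$ combines with the minus sign above to give $\sum_{a\ne b}u_av_b\bigl(\sum_{i=0}^{n-1}(-1)^{i}e_i\,h_{n-1-i}(t_a,t_b)\bigr)$, so the lemma reduces to the purely symmetric-function claim that, for every pair $a\ne b$, $\sum_{i=0}^{n-1}(-1)^{i}e_i(t_1,\dots,t_n)\,h_{n-1-i}(t_a,t_b)=0$. To prove this, compare coefficients of $s^{n-1}$ on the two sides of
\[
\Bigl(\sum_{i\ge 0}(-1)^{i}e_is^{i}\Bigr)\Bigl(\sum_{k\ge 0}h_k(t_a,t_b)s^{k}\Bigr)=\prod_{j=1}^{n}(1-t_js)\cdot\frac{1}{(1-t_as)(1-t_bs)}=\prod_{\substack{1\le j\le n\\ j\ne a,b}}(1-t_js),
\]
where the middle equality uses $a\ne b$, so that both factors $(1-t_as)$ and $(1-t_bs)$ divide the numerator. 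The right-hand side is a polynomial in $s$ of degree $n-2$, hence its coefficient of $s^{n-1}$ vanishes; but that coefficient is exactly $\sum_{i=0}^{n-1}(-1)^{i}e_i\,h_{n-1-i}(t_a,t_b)$, which finishes the argument.

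I do not anticipate a genuine obstacle: the whole argument is a short chain of elementary symmetric-function identities, and the reduction to the two-variable generating-function identity is its conceptual heart, after which the verification is immediate. The only point that needs care is the sign bookkeeping in the first two steps — keeping the products $X_pY_q$ in order, and tracking how $(-1)^{i+1}$ interacts with the minus sign coming from the diagonal term.
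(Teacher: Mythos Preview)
Your argument is correct and is essentially the same as the paper's: both reduce to the observation that in $B$ the bracketed terms contribute only off-diagonal pieces $u_av_b$ with $a\neq b$, and then use that $\prod_j(1-t_js)\cdot\frac{1}{(1-t_as)(1-t_bs)}$ is a polynomial of degree $n-2$, so its $s^{n-1}$ coefficient vanishes. The only difference is packaging: the paper works directly with the generating functions $X(s),Y(s),S(s)$ and shows $E(s)\bigl(X(s)Y(s)-S(s)\bigr)$ has degree $\le n-2$, whereas you expand into coefficients first and then invoke the same generating-function identity pairwise; the mathematical content is identical.
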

\begin{proof}
Consider the generating functions
\[
X(s)=\sum_{i=1}^n \frac{u_i}{1-t_i s},\quad Y(s)=\sum_{i=1}^n \frac{v_i}{1-t_i v_i},\quad S(s)=\sum_{i=1}^n \frac{u_i v_i}{(1-t_i s)^2}\, .
\]
Then
\[
X(s)Y(s)-S(s)=\sum_{\substack{i,j=1\\ i\neq j}}^n \frac{u_i v_j}{(1-t_i s)(1-t_j s)}\,,
\]
and thus, letting $E(s)=\prod_{i=1}^n (1-t_i s)$ be the generating function for the elementary symmetric polynomials, the expression $E(s)(X(s)Y(s)-S(s))$ is a polynomial in $s$ of degree at most $n-2$. Then the coefficient of $s^{l}$ for all $l\geq n-1$ must vanish. But plugging in $X(s)=\sum_{l\geq 1} X_l s^{l-1}$, $Y(s)=\sum_{l\geq 1} Y_l s^{l-1}$ and $S(s)=\sum_{l\geq 1} s_l(Z_1,\dots,Z_l) s^{l-1}$ (see Lemma \ref{lem:imagecohomology} and the proof of Lemma \ref{lem:wl}), the coefficient of $s^{n-1}$ is precisely the expression which the lemma claims to vanish.
\end{proof}

\begin{addendum} \label{add:minimal}
For every $n\geq 1$, $H^\ast_{\mathrm{GL}_n(\C)}(C_2(\mathrm{GL}_n(\C));\Z[1/n])$ is generated as a $\Lambda_n\otimes \Z[1/n]$-algebra by $3n-1$ elements. It cannot be generated by fewer elements.
\end{addendum}
\begin{proof}
In the identity of Lemma \ref{lem:newtonidentity}, $Z_{n}$ appears only once, namely in $s_n(Z_{1},\dots,Z_{n})$ with coefficient $n$. The relation shows that if $n$ is inverted, then the generator $Z_n$ can be omitted. In \cite[Theorem F]{KT21} Kishimoto--Takeda showed that any generating set for the rational cohomology ring of $C_2(\mathrm{GL}_n(\C))$ has to have at least $3n-1$ elements. By Proposition \ref{prop:equivariantlyformal} below, the natural map from the rational equivariant to the rational non-equivariant cohomology of $C_2(\mathrm{GL}_n(\C))$ is surjective. This implies that a generating set for the equivariant cohomology as a $\Lambda_n\otimes \Z[1/n]$-algebra must have at least $3n-1$ elements as well. 
\end{proof}

\section{Inverting $n!$} \label{sec:inverting}

The goal of this section is to determine a presentation of the non-equivariant cohomology ring  $H^\ast(C_2(\mathrm{GL}_n(\C));k)$, where $k$ is a field of characteristic zero or prime to $n!$. If the integers $1,\dots,n$, equivalently $n!$, are inverted, we may define elements
\[
W_l=s_l(Z_1,\dots,Z_l)/l
\]
in $A/\mathcal{J} \otimes \Z[1/n!]$. These can be taken as generators instead of the $Z_1,\dots,Z_n$, because $s_l(Z_1,\dots,Z_l)$ always has leading term $(-1)^{l+1} l Z_l$. In this basis the ideal of relations simplifies considerably: Write $X^T=(X_1,\dots,X_n)$ etc., and let $A_l$ denote the symmetric matrix
\[
A_l=\mathrm{adj}(V)^T \mathrm{diag}(t_1,\dots,t_n)^{l-1} \mathrm{adj}(V)\in \mathrm{Mat}_n(\Z[e_1,\dots,e_n])\,.
\]
Let $\mathcal{J}\subseteq A$ be as in Theorem \ref{thm:main}.

\begin{proposition} \label{prop:relationswhenn!inverted}
 After inverting $n!$, the ideal $\mathcal{J}\subseteq A$ is the $\Delta^2$-saturation of the ideal generated by the elements $R_l':=\Delta^2 W_l-X^T A_l Y$ for $1\leq l \leq n$.
\end{proposition}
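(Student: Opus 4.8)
The plan is to transcribe the proof of Lemma \ref{lem:ideal} essentially verbatim, with the generators $Z_1,\dots,Z_n$ replaced by $W_1,\dots,W_n$ and the relations $R_1,\dots,R_n$ of (\ref{eq:idealofrelations}) replaced by $R_1',\dots,R_n'$; throughout one works over $\Z[1/n!]$, so all algebras below are tacitly tensored with $\Z[1/n!]$. The first thing to record is that, since $s_l(Z_1,\dots,Z_l)$ has leading term $(-1)^{l+1}lZ_l$, the elements $W_1,\dots,W_n$ and $Z_1,\dots,Z_n$ generate the same polynomial subalgebra of $A\otimes\Z[1/n!]$, so that $A\otimes\Z[1/n!]=\Lambda_n\otimes\Z[1/n!]\otimes\Z[W_1,\dots,W_n]\otimes\bigwedge_\Z(X_1,\dots,X_n,Y_1,\dots,Y_n)$. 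Since flat base change commutes with kernels, Lemma \ref{lem:ideal} gives $\mathcal{J}\otimes\Z[1/n!]=\ker(\rho\otimes\Z[1/n!])$, and it suffices to show that this kernel equals the $\Delta^2$-saturation of $(R_1',\dots,R_n')$.

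First I would check that each $R_l'$ lies in the kernel. By Lemma \ref{lem:wl}, $\rho(W_l)=\sum_{i=1}^n t_i^{l-1}u_iv_i$. Write $u,v$ for the column vectors $(u_1,\dots,u_n)^T$, $(v_1,\dots,v_n)^T$, so that $X=Vu$ and $Y=Vv$ after applying $\rho$ (notation of Section \ref{sec:ideal}). Using $\mathrm{adj}(V)V=V\mathrm{adj}(V)=\Delta I$, hence also $V^T\mathrm{adj}(V)^T=\Delta I$, we get
\[
\rho(X^TA_lY)=u^T(V^T\mathrm{adj}(V)^T)\,\mathrm{diag}(t_1,\dots,t_n)^{l-1}(\mathrm{adj}(V)V)\,v=\Delta^2 u^T\mathrm{diag}(t_1,\dots,t_n)^{l-1}v=\Delta^2\rho(W_l),
\]
so $\rho(R_l')=0$. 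As $\Delta^2$ is a non-zero-divisor on $B$ (which is $\Z[t_1,\dots,t_n]$-free), the ideal $\ker\rho$ is $\Delta^2$-saturated, and therefore the $\Delta^2$-saturation of $(R_1',\dots,R_n')$ is contained in $\ker\rho$.

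For the reverse inclusion I would reproduce the commutative square from the proof of Lemma \ref{lem:ideal} with $(R_1',\dots,R_n')$ in place of $(R_1,\dots,R_n)$: localise $A$ at $\Delta^2$ and $B$ at $\Delta$, and use the isomorphism $B[\Delta^{-1}]\otimes\Z[1/n!]\cong\Z[1/n!][t_1,\dots,t_n][\Delta^{-1}]\otimes\bigwedge_\Z(X_1,\dots,X_n,Y_1,\dots,Y_n)$ coming from $u\mapsto V^{-1}X$, $v\mapsto V^{-1}Y$. Over $A[\Delta^{-2}]$ one has $R_l'=\Delta^2\bigl(W_l-X^T(A_l/\Delta^2)Y\bigr)$ with $A_l/\Delta^2=(V^T)^{-1}\mathrm{diag}(t_1,\dots,t_n)^{l-1}V^{-1}$, and since $A_l$ has entries in $\Lambda_n$ this $X^T(A_l/\Delta^2)Y$ is a genuine element of $\Lambda_n[\Delta^{-2}]\otimes\bigwedge_\Z(X,Y)$; consequently passing to the quotient by $(R_1',\dots,R_n')$ merely eliminates the variables $W_1,\dots,W_n$, so that $A[\Delta^{-2}]/(R_1',\dots,R_n')\cong\Lambda_n[\Delta^{-2}]\otimes\Z[1/n!]\otimes\bigwedge_\Z(X,Y)$ is free on $X_1,\dots,X_n,Y_1,\dots,Y_n$. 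The induced map $\bar\rho$ into the lower-right corner of the square is the identity on the exterior generators and the inclusion $\Lambda_n[\Delta^{-2}]\otimes\Z[1/n!]\hookrightarrow\Z[1/n!][t_1,\dots,t_n][\Delta^{-1}]$ on coefficients, hence injective; and the square commutes because the computation of the previous paragraph, applied over $B[\Delta^{-1}]$ where $V$ is invertible, shows $\rho(W_l)=X^T(A_l/\Delta^2)Y$ there. It follows exactly as in Lemma \ref{lem:ideal} that $\ker\rho$ is the preimage of $(R_1',\dots,R_n')$ under the localisation $A\otimes\Z[1/n!]\to A[\Delta^{-2}]$, i.e.\ the $\Delta^2$-saturation of $(R_1',\dots,R_n')$; combined with Lemma \ref{lem:ideal} this is the assertion.

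I do not anticipate a genuine obstacle: the argument is a line-by-line adaptation of Lemma \ref{lem:ideal}, and the only parts needing care are the matrix identity $V^TA_lV=\Delta^2\mathrm{diag}(t_1,\dots,t_n)^{l-1}$, the invertibility of the change of variables $Z_\bullet\leftrightarrow W_\bullet$ over $\Z[1/n!]$, and the resulting freeness of the localised quotient on the $X_i,Y_i$. The actual content — the reason the relations collapse from the multi-minor form (\ref{eq:idealofrelations}) to the single bilinear form $\Delta^2W_l-X^TA_lY$ — is Lemma \ref{lem:wl}: $W_l$ is linear in the products $u_iv_i$, whereas $Z_l$ is a polynomial in them.
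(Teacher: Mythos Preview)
Your proposal is correct and follows essentially the same approach as the paper: verify the relation $\rho(R_l')=0$ via the matrix identity (the paper writes it as $\Delta^2 W_l=(\mathrm{adj}(V)X)^T\mathrm{diag}(t_i)^{l-1}(\mathrm{adj}(V)Y)=X^TA_lY$, using $\Delta u=\mathrm{adj}(V)X$ rather than $X=Vu$, but this is the same computation), then appeal to the argument of Lemma~\ref{lem:ideal} for the reverse inclusion. The paper's proof is terser, simply saying ``the rest of the proof is the same as in Lemma~\ref{lem:ideal}'', whereas you spell out the localisation square and the freeness of $A[\Delta^{-2}]/(R_1',\dots,R_n')$ explicitly; but there is no substantive difference.
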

\begin{proof}
Let $u=(u_1,\dots,u_n)^T$  and $v=(v_1,\dots,v_n)^T$. In the ring $B$ we have that $W_l=\sum_{k=1}^n t_k^{l-1}u_k v_k$ by Lemma \ref{lem:wl}. This implies that
\begin{equation*}
\begin{split}
\Delta^2 W_l &=(\Delta u)^T\mathrm{diag}(t_1,\dots,t_n)^{l-1} (\Delta v)\\&= (\mathrm{adj}(V) X)^T\mathrm{diag}(t_1,\dots,t_n)^{l-1} (\mathrm{adj}(V) Y)\\&= X^TA_l Y\, .
\end{split}
\end{equation*}
The rest of the proof is the same as in Lemma \ref{lem:ideal}.
\end{proof}

After inverting $n!$, Lemma \ref{lem:newtonidentity} allows us to eliminate the generator $W_n$. As the proof of Lemma \ref{lem:ideal} shows, the relation $R_n'$ of Proposition \ref{prop:relationswhenn!inverted} must then be a consequence of $R_1',\dots,R_{n-1}'$.

We also need the following equivariant formality result:

\begin{proposition} \label{prop:equivariantlyformal}
Let $k$ be a field of characteristic zero or prime to $n!$. Then, for all $m\geq 0$, $H^\ast_{U(n)}(C_m(U(n));k)$ is a free $H^\ast(BU(n);k)$-module.
\end{proposition}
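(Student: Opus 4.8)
The plan is to reduce the statement to the non-modular invariant theory of $S_n$ acting on a polynomial ring. Write $R=H^\ast(BT;k)=k[t_1,\dots,t_n]$; since $\operatorname{char} k$ is prime to $n!=|S_n|$, the transfer identifies $H^\ast(BU(n);k)$ with $R^{S_n}=\Lambda_n\otimes k$. The first step is to identify the equivariant cohomology with an invariant ring: by Baird \cite[Theorem 3.5]{Baird} (see also Remark~\ref{rem:sufficesn!}; for $\operatorname{char} k=0$ this is the cohomological form of Lemma~\ref{lem:char0case}), the map $f_n$ of Section~\ref{sec:notorsion}, together with averaging over $S_n$, induces an isomorphism of graded $H^\ast(BU(n);k)$-modules
\[
H^\ast_{U(n)}(C_m(U(n));k)\ \cong\ \bigl(R\otimes_k H^\ast(T^m;k)\bigr)^{S_n}\, .
\]
So it suffices to prove that the right-hand side is a free $R^{S_n}$-module.

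For the second step, set $\Lambda=H^\ast(T^m;k)$, the exterior algebra on $nm$ generators of degree one on which $S_n$ acts by permuting the ``torus index''; thus $H^\ast(BT\times T^m;k)=R\otimes_k\Lambda$ carries the diagonal $S_n$-action, with $R$ acting only on the first tensor factor. Now I would invoke the Chevalley structure theorem: since $S_n$ acts on $R$ as a reflection group and $|S_n|$ is invertible in $k$, the quotient map $R\twoheadrightarrow R_{S_n}$ onto the coinvariant algebra admits an $S_n$-equivariant graded splitting, giving an isomorphism $R\cong R^{S_n}\otimes_k R_{S_n}$ of graded $R^{S_n}$-modules compatible with the $S_n$-actions, where $R_{S_n}$ is finite-dimensional with $\dim_k R_{S_n}=n!$. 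Substituting, and using that $(-)^{S_n}$ commutes with $R^{S_n}\otimes_k(-)$ because $|S_n|$ is invertible, yields
\[
\bigl(R\otimes_k\Lambda\bigr)^{S_n}\ \cong\ R^{S_n}\otimes_k\bigl(R_{S_n}\otimes_k\Lambda\bigr)^{S_n}\, ,
\]
which is a free graded $R^{S_n}$-module of finite rank (with basis indexed by any homogeneous $k$-basis of the graded vector space $(R_{S_n}\otimes_k\Lambda)^{S_n}$). This proves the proposition.

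The step I expect to demand the most care is the first: pinning down the invariant-ring description over a non-modular field, i.e.\ correctly invoking Baird's theorem --- the alternative being to re-derive it by a transfer argument from the maximal torus together with equivariant formality of the conjugation $T$-action on $C_m(U(n))$ (whose $T$-fixed point set is $T^m$). It is worth emphasising that the hypothesis on $\operatorname{char} k$ is used twice and is genuinely necessary: the naive assertion ``the $S_n$-invariants of an $R$-free module are $R^{S_n}$-free'' fails in the modular case, and by Remark~\ref{rem:torsionfreeness} the group $H^\ast_{U(n)}(C_m(U(n));\Z)$ need not even be torsion free for $m$ large, let alone free over $H^\ast(BU(n);\Z)$.
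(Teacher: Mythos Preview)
Your proof is correct and takes a different route from the paper's. Both arguments start from Baird's identification $H^\ast_{U(n)}(C_m(U(n));k)\cong (R\otimes_k\Lambda)^{S_n}$, but diverge thereafter. The paper dualises to homology and coinvariants, then decomposes $H_\ast(BT\times T^m;k)_{S_n}$ according to a graded basis of $H_\ast((S^1)^m;k)$ into a direct sum of tensor products of symmetric and exterior powers of $C=H_\ast(BS^1;k)$ (formula~(\ref{eq:comodule})); each such piece is checked to be a free $S^n(C)$-comodule via the classical facts that $k[t_1,\dots,t_{a+b}]^{S_a\times S_b}$ and the alternating polynomials are free over the symmetric polynomials. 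Your argument instead invokes the $S_n$-equivariant Chevalley splitting $R\cong R^{S_n}\otimes_k R_{S_n}$ directly in cohomology and pulls $R^{S_n}$ out of the invariants in one stroke. This is cleaner and avoids the passage to comodules; the paper's more explicit decomposition, on the other hand, has the advantage of yielding (as the paper remarks) a formula for the Poincar\'e series of $C_m(U(n))$. One minor comment: the commutation of $(-)^{S_n}$ with $R^{S_n}\otimes_k(-)$ holds over any field since $R^{S_n}$ carries the trivial action; the non-modular hypothesis is genuinely used only for Baird's theorem and for the equivariant splitting of $R\twoheadrightarrow R_{S_n}$.
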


This follows as an application of \cite[Proposition A.3]{Baird} (which uses the Borel localisation theorem), but we give here a more direct proof. Our proof (more precisely, line (\ref{eq:comodule})) could be used to give a formula for the Poincar{\'e} series of $C_m(U(n))$; but because such formulae are well-known (see e.g. \cite{RamrasStafa}), we will not do this here.

Before we embark on the proof we set some notation. View $C:=H_\ast(BS^1;k)$ as a coalgebra via the diagonal map $BS^1\to BS^1\times BS^1$. The graded dual $C^\vee$ is the polynomial algebra $k[t]$ in a single variable of degree $2$. For $a\geq 0$, let
\[
S^a(C)=(C^{\otimes a})_{S_a}
\]
be the $a$-th symmetric power of $C$ with the coalgebra structure induced from $C$. Its graded dual is the algebra of symmetric polynomials $S^a(C)^\vee=k[t_1,\dots,t_a]^{S_a}$.

For $a\geq 0$, we let ${\textstyle \bigwedge}^a(C)$ denote the $a$-th exterior power of $C$. Its graded dual is the subspace ${\textstyle \bigwedge}^a(C)^\vee \subseteq k[t_1,\dots,t_a]$ of alternating polynomials. This is naturally a $S^a(C)^\vee$-module, which shows that ${\textstyle \bigwedge}^a(C)$ is naturally a $S^a(C)$-comodule.

\begin{itemize}
\item For any $a,b\geq 0$, we can view $S^{a}(C) \otimes S^{b}(C)$ as a $S^{a+b}(C)$-comodule via the quotient map $S^{a}(C) \otimes S^{b}(C)\to S^{a+b}(C)$.  This is a free $S^{a+b}(C)$-comodule; indeed, the dual statement says that $k[t_1,\dots,t_{a+b}]^{S_{a}\times S_{b}}$ is a free $k[t_1,\dots,t_{a+b}]^{S_{a+b}}$-module, which is a classical fact from invariant theory.
\item  The $S^a(C)$-comodule ${\textstyle \bigwedge}^a(C)$ is also free: The dual statement says that the subring of $k[t_1,\dots,t_a]$ of alternating polynomials is a free module over the ring of symmetric polynomials; this is indeed so, the free generator being the Vandermonde polynomial.
\end{itemize}

\begin{proof}[Proof of Proposition \ref{prop:equivariantlyformal}]
We find it easier to formulate the proof of the dual -- and equivalent -- statement that $H_\ast^{U(n)}(C_m(U(n));k)$ is a free $H_\ast(BU(n);k)$-comodule. Choose a maximal torus $T\subseteq U(n)$ and identify $H_\ast(BU(n);k)$ with $H_\ast(BT;k)_{S_n}$. The inclusion $T\subseteq U(n)$ induces an isomorphism of $H_\ast(BU(n);k)$-comodules
\[
H_\ast^{U(n)}(C_m(U(n));k)\cong H_\ast(BT\times T^m;k)_{S_n}\, .
\]
For $\mathrm{char}(k)=0$ a self-contained proof was given in Lemma \ref{lem:char0case}. If $\mathrm{char}(k)$ is prime to $n!$, then the same conclusion holds by \cite[Theorem 3.5]{Baird}, taking equivariant cohomology and arguing using the Serre spectral sequence.

We will now simply calculate the right hand side as a $H_\ast(BT;k)_{S_n}$-comodule and observe that it is free. First, observe there is an isomorphism of coalgebras $H_\ast(BT;k)_{S_n}\cong S^n(C)$. Let $\mathscr{B}$ be a graded basis for $H_\ast((S^1)^m;k)$ as a $k$-module. Let $\mathscr{B}^{\textnormal{odd}} \subseteq \mathscr{B}$ be the subset of odd degree and $\mathscr{B}^{\textnormal{even}}\subseteq \mathscr{B}$ the subset of even degree elements, respectively. Let $\mathcal{P}(\mathscr{B},m)$ be the set of $\mathscr{B}$-indexed partitions $\sum_{x\in \mathscr{B}} \lambda_x=m$, where $\lambda_x\geq 0$ for all $x\in \mathscr{B}$. Note that
\[
H_\ast(BT\times T^m;k)_{S_n}\cong (H_\ast(BS^1\times (S^1)^m)^{\otimes n})_{S_n}\, .
\]
By simply writing down a basis in terms of $\mathscr{B}$ one sees that this is isomorphic as a $S^n(C)$-comodule to
\begin{equation} \label{eq:comodule}
\bigoplus_{\lambda\in \mathcal{P}(\mathscr{B},m)} \left(\bigotimes_{x\in \mathscr{B}^{\textnormal{odd}}} {\textstyle \bigwedge}^{\lambda_x}(\Sigma^{|x|} C)\right)\otimes \left(\bigotimes_{x\in \mathscr{B}^{\textnormal{even}}}S^{\lambda_x}(\Sigma^{|x|}C)\right)\, .
\end{equation}
Here $\Sigma^{|x|}$ means a shift in grading by $|x|$. By the remarks preceding the proof, this is a free $S^n(C)$-comodule.
\end{proof}

As a consequence of the proposition (using, for example, the Eilenberg-Moore spectral sequence), the natural map
\[
k\otimes_{H^\ast(BU(n);k)}H^\ast_{U(n)}(C_m(U(n));k) \to H^\ast(C_m(U(n));k)
\]
is an isomorphism of $k$-algebras. This means that, after change of coefficients to $k$, a presentation of $H^\ast(C_2(U(n));k)$ is obtained from the presentation $A/\mathcal{J}$ of Theorem \ref{thm:main} by reducing modulo the irrelevant ideal $(e_1,\dots,e_n)\subseteq \Lambda_n$.

We now put everything together: Let $k$ be a field of characteristic zero or prime to $n!$ and let $\Lambda_n=k[e_1,\dots,e_n]$. Omitting the generator $W_n$, let $A'$ be the free graded commutative algebra over $\Lambda_n$ generated by $X_1,\dots,X_n$, $Y_1,\dots,Y_n$ and $W_1,\dots,W_{n-1}$. Let $\mathcal{J}'\subseteq A'$ be the $\Delta^2$-saturation of the ideal $(R_1',\dots,R_{n-1}')$. So, $A'/\mathcal{J}'\cong A/\mathcal{J}\otimes k$.

Let $\bar{X}_1,\dots,\bar{X}_n$, $\bar{Y}_1,\dots,\bar{Y}_n$ and $\bar{W}_1,\dots,\bar{W}_{n-1}$ be generators of degrees $|\bar{W}_l|=2l$ and $|\bar{X}_l|=|\bar{Y}_l|=2l-1$. We think of these as the reduction modulo $(e_1,\dots,e_n)\subseteq \Lambda_n$ of the generators $X_l,Y_l$ and $W_l$ of $A'$. Let $\bar{A}'$ be the $k$-algebra
\[
\bar{A}'=k[\bar{W}_1,\dots,\bar{W}_{n-1}] \otimes {\textstyle \bigwedge_k}(\bar{X}_1,\dots,\bar{X}_n,\bar{Y}_1,\dots,\bar{Y}_n)\, .
\]
Let $\bar{\mathcal{J}}'\subseteq \bar{A}'$ be the reduction of $\mathcal{J}'$ modulo $(e_1,\dots,e_n)$.

\begin{theorem} \label{thm:main2}
Let $k$ be a field of characteristic zero or prime to $n!$. For every $n\geq 0$, there is an isomorphism of $k$-algebras
\[
H^\ast(C_2(\mathrm{GL}_n(\C));k) \cong \bar{A}'/\bar{\mathcal{J}}'\, .
\]
\end{theorem}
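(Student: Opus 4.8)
The plan is to deduce Theorem \ref{thm:main2} by combining three ingredients that have already been assembled in the excerpt: (i) the integral presentation $H^\ast_{\mathrm{GL}_n(\C)}(C_2(\mathrm{GL}_n(\C));\Z)\cong A/\mathcal{J}$ of Theorem \ref{thm:main}; (ii) the equivariant formality of Proposition \ref{prop:equivariantlyformal}, which after a field change of coefficients identifies the non-equivariant cohomology with the quotient of the equivariant cohomology by the irrelevant ideal; and (iii) the simplification of generators and relations made possible by inverting $n!$, recorded in Proposition \ref{prop:relationswhenn!inverted} together with the elimination of $W_n$ via Lemma \ref{lem:newtonidentity}.

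First I would spell out step (ii). Since $k$ has characteristic zero or prime to $n!$, Proposition \ref{prop:equivariantlyformal} says $H^\ast_{U(n)}(C_2(U(n));k)$ is a free $H^\ast(BU(n);k)$-module, so the Eilenberg--Moore spectral sequence for the fibration $C_2(U(n))\to C_2(U(n))_{hU(n)}\to BU(n)$ collapses and yields a ring isomorphism
\[
H^\ast(C_2(U(n));k)\;\cong\; k\otimes_{H^\ast(BU(n);k)} H^\ast_{U(n)}(C_2(U(n));k)\,,
\]
that is, the quotient of the equivariant cohomology by the irrelevant ideal $(e_1,\dots,e_n)$. Combining this with the homotopy equivalence $C_2(\mathrm{GL}_n(\C))\simeq C_2(U(n))$ (Pettet--Souto) and Theorem \ref{thm:main} base-changed to $k$, we get $H^\ast(C_2(\mathrm{GL}_n(\C));k)\cong (A/\mathcal{J}\otimes k)/(e_1,\dots,e_n)$. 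It remains to recognise the right-hand side as $\bar A'/\bar{\mathcal J}'$.

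Next I would rewrite the presentation $A/\mathcal J\otimes k$ in the generators adapted to the inverted-$n!$ situation. Because $s_l(Z_1,\dots,Z_l)$ has leading term $(-1)^{l+1}lZ_l$ and $l$ is invertible, the change of variables $W_l=s_l(Z_1,\dots,Z_l)/l$ is a triangular automorphism of the free part, so $A/\mathcal J\otimes k$ is presented by the free graded commutative $\Lambda_n\otimes k$-algebra on $X_1,\dots,X_n,Y_1,\dots,Y_n,W_1,\dots,W_n$ modulo the $\Delta^2$-saturation of $(R_1',\dots,R_n')$, by Proposition \ref{prop:relationswhenn!inverted}. Lemma \ref{lem:newtonidentity} expresses $W_n$ (the only place $Z_n$, hence $W_n$ up to invertible scalar, occurs with nonzero coefficient) in terms of the remaining generators, so after eliminating $W_n$ we obtain exactly $A'/\mathcal J'$ as defined just before the theorem, with $\mathcal J'$ the $\Delta^2$-saturation of $(R_1',\dots,R_{n-1}')$; thus $A'/\mathcal J'\cong A/\mathcal J\otimes k$. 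Finally, reducing modulo $(e_1,\dots,e_n)$ is by construction the passage from $A'$ to $\bar A'$ and from $\mathcal J'$ to $\bar{\mathcal J}'$, so $(A/\mathcal J\otimes k)/(e_1,\dots,e_n)\cong \bar A'/\bar{\mathcal J}'$, completing the chain of isomorphisms.

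The main obstacle is the justification of step (ii): one must check that equivariant formality genuinely forces the Eilenberg--Moore (or Serre) spectral sequence computing $H^\ast(C_2(U(n));k)$ to degenerate and that the resulting additive isomorphism is multiplicative, so that base-changing the ring presentation along $H^\ast(BU(n);k)\to k$ is legitimate. A secondary, more bookkeeping-style point is to verify that reduction modulo the irrelevant ideal commutes with $\Delta^2$-saturation in the way claimed, i.e. that $\bar{\mathcal J}'$ — \emph{defined} as the image of $\mathcal J'$ — is still describable as a saturation and that no relations are lost or spuriously created; this is where one should be careful, although it is not deep once the flat/free module structures from Proposition \ref{prop:equivariantlyformal} are in hand.
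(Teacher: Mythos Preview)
Your proposal is correct and follows essentially the same approach as the paper: base-change Theorem~\ref{thm:main} to $k$, pass from $Z_l$ to $W_l$ via Proposition~\ref{prop:relationswhenn!inverted}, eliminate $W_n$ with Lemma~\ref{lem:newtonidentity}, and then invoke Proposition~\ref{prop:equivariantlyformal} together with the Eilenberg--Moore spectral sequence to reduce modulo $(e_1,\dots,e_n)$. The only point the paper makes slightly more explicit is why $R_n'$ becomes redundant after eliminating $W_n$ (it appeals to the argument of Lemma~\ref{lem:ideal}), which you should cite when asserting that $\mathcal{J}'$ is the saturation of just $(R_1',\dots,R_{n-1}')$.
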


We shall now explain a simple procedure by which one can compute a $k$-linear basis for the ideal of relations $\bar{\mathcal{J}}'$ (see Corollary \ref{cor:main2}). To this end, we introduce a bigrading on $A'/\mathcal{J}'$. To distinguish it from the cohomological grading, to which it is not related at all, we shall call it a \emph{weight}. The generators shall have weights
\[
\mathsf{w}(e_l)=(0,0),\quad \mathsf{w}(X_l)=(1,0),\quad \mathsf{w}(Y_l)=(0,1),\quad \mathsf{w}(W_l)=(1,1)\,,
\] 
independent of $l$. The relations $R_l'$ of Proposition \ref{prop:relationswhenn!inverted} are then homogeneous of weight $(1,1)$. It follows that the ideal $\mathcal{J}'$ is homogeneous with respect to weight. Hence, $A'/\mathcal{J}'$ inherits a bigrading, and so does $\bar{A}'/\bar{\mathcal{J}}'$ with $\mathsf{w}(\bar{X}_l)=(1,0)$, $\mathsf{w}(\bar{Y}_l)=(0,1)$ and $\mathsf{w}(\bar{W}_l)=(1,1)$.

\begin{lemma} \label{lem:monomials}
Every monomial in the generators $X_l,Y_l$ and $W_l$ of total weight $(a,b)$ with $a> n$ or $b> n$ is zero in $A'/\mathcal{J}'$.
\end{lemma}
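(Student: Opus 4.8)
The plan is to work inside the ring $B = \Z[t_1,\dots,t_n]\otimes \bigwedge_\Z(u_1,\dots,u_n,v_1,\dots,v_n)$, where $A'/\mathcal{J}'$ sits as a subring via the map $\rho$ of Lemma \ref{lem:ideal} (extended to allow $W_l$ in place of $Z_l$, which is harmless since $W_l$ is a polynomial in $Z_1,\dots,Z_l$ with invertible leading coefficient). A monomial in the $X_l,Y_l,W_l$ of weight $(a,b)$ maps to an element of $B$ which is homogeneous of degree $a$ in the $u$-variables and degree $b$ in the $v$-variables, because $\rho(X_l)=\sum_i t_i^{l-1}u_i$ has $u$-degree $1$ and $v$-degree $0$, $\rho(Y_l)$ has $u$-degree $0$ and $v$-degree $1$, and by Lemma \ref{lem:wl} the element $\rho(W_l)=\sum_i t_i^{l-1}u_iv_i$ has $u$-degree $1$ and $v$-degree $1$. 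Since the exterior algebra on $u_1,\dots,u_n$ vanishes in degrees $>n$, any element of $B$ with $u$-degree $a>n$ (or symmetrically $v$-degree $b>n$) is zero; hence its preimage lies in $\ker\rho=\mathcal{J}'$, which is exactly the statement.

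First I would make precise the weight-to-$(u,v)$-bidegree correspondence: the weight bigrading $\mathsf{w}$ on $A'$ is designed so that $\mathsf{w}$ matches the bidegree counting $u$'s and $v$'s under $\rho$, and $\rho$ is a map of bigraded rings for this bidegree (the $e_l\in\Lambda_n$ have $u$- and $v$-degree zero). Then a monomial of total weight $(a,b)$ maps into the summand of $B$ spanned by $t$-monomials times $u_I v_J$ with $|I|=a$, $|J|=b$. Second, I would invoke the elementary fact that $\bigwedge^a_\Z(u_1,\dots,u_n)=0$ for $a>n$, so this summand is the zero subgroup whenever $a>n$ or $b>n$. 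Third, I would conclude by Lemma \ref{lem:ideal}: $\ker\rho=\mathcal{J}'$ (after inverting $n!$, or already over $\Z$ in the form $\mathcal{J}$), so every such monomial is zero in $A'/\mathcal{J}'$.

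I do not expect any genuine obstacle here; the only point requiring a line of care is checking that $\rho$ is homogeneous for the $(u,v)$-bidegree, which is immediate from the explicit formulae \eqref{eq:explicitformxy}, \eqref{eq:explicitformz} and Lemma \ref{lem:wl}. One should also note that the claim is about monomials, not arbitrary elements of the stated weight: an arbitrary element of weight $(a,b)$ with $a>n$ could in principle be a nonzero element of $\Lambda_n$ times such a monomial, but no — if the weight is $(a,b)$ with $a>n$ then every monomial appearing in it already has $u$-degree $>n$, so the argument applies termwise and the whole element vanishes. Thus the lemma follows, and in fact the same reasoning shows more: $A'/\mathcal{J}'$ is concentrated in weights $(a,b)$ with $0\le a,b\le n$.
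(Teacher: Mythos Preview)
Your proof is correct and follows essentially the same approach as the paper: embed $A'/\mathcal{J}'$ in $B$ (or $B\otimes k$), observe via (\ref{eq:explicitformxy}) and Lemma~\ref{lem:wl} that weight $(a,b)$ corresponds to $(u,v)$-bidegree $(a,b)$, and use that the exterior algebra vanishes in degree $>n$. The only cosmetic difference is that you cite Lemma~\ref{lem:ideal} for $\ker\rho=\mathcal{J}'$, whereas strictly speaking it is Proposition~\ref{prop:relationswhenn!inverted} (the $W_l$-version) that gives this, but you already noted this adjustment.
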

\begin{proof}
We can view $A'/\mathcal{J}'$ as a subring of $B\otimes k$. Then, by (\ref{eq:explicitformxy}) and Lemma \ref{lem:wl}, ``weight $(a,b)$" translates into ``homogeneous of degree $a$ in the variables $u_1,\dots,u_n$ and homogeneous of degree $b$ in the variables $v_1,\dots,v_n$". Since these are generators of the exterior algebra part of $B\otimes k$, any monomial of degree $> n$ in either one of those sets of variables is zero.
\end{proof}

We can now construct the quotient $\bar{A}'/\bar{\mathcal{J}}'$ in two steps: Let $\mathcal{M}\subseteq A'$ be the ideal generated by all monomials of total weight $(a,b)$ with $a> n$ or $b> n$. Lemma \ref{lem:monomials} shows that $\mathcal{M}\subseteq \mathcal{J}'$, and thus $A'/\mathcal{J}'\cong (A'/\mathcal{M})/(\mathcal{J}'/\mathcal{M})$. Let $\pi\co A'\to A'/\mathcal{M}$ be the projection, and let $\mathcal{I}\subseteq A'/\mathcal{M}$ be the $\Delta^2$-saturation of $(\pi(R_1'),\dots,\pi(R_{n-1}'))$. Let $\bar{\mathcal{I}}$ be its reduction modulo $(e_1,\dots,e_n)$. Then $\mathcal{I}=\mathcal{J}'/\mathcal{M}$ and
\[
\bar{A}'/\bar{\mathcal{J}}' \cong (\bar{A}'/\bar{\mathcal{M}})/\bar{\mathcal{I}}\, .
\]
As the polynomial generators $W_1,\dots,W_{n-1}$ have become nilpotent in $A'/\mathcal{M}$, the ideal $(\pi(R_1'),\dots,\pi(R_{n-1}'))$ is a finitely generated $\Lambda_n$-module. Its saturation $\mathcal{I}$ can then be computed using CAS such as Macaulay2 \cite{Macaulay2}.

Fix $n\geq 1$. For $a,b\geq 0$ let $\mathscr{B}_{a,b}$ be an ordered collection of all monomials of weight $(a,b)$ in the variables $X_1,\dots,X_n$, $Y_1,\dots,Y_n$ and $W_1,\dots,W_{n-1}$. We have the following algorithm:
\begin{enumerate}
\item[(1)] For all weights $(a,b)$ with $1\leq a\leq n$ and $1\leq b \leq n$ do the following:
\begin{itemize}
\item[(i)] For all $M\in \mathscr{B}_{a-1,b-1}$ and all $i\in \{1,\dots,n-1\}$ expand $M R_i'$ as a $\Lambda_n$-linear combination of the basis elements in $\mathscr{B}_{a,b}$. Arrange the coefficient vectors into a matrix $T_{a,b}$ of size $|\mathscr{B}_{a,b}|\times (n-1)|\mathscr{B}_{a-1,b-1}|$ with entries in $\Lambda_n$.
\item[(ii)] The image of $T_{a,b}$ is a finitely generated $\Lambda_n$-submodule $\mathrm{im}(T_{a,b})\subseteq \Lambda_n^{\mathscr{B}_{a,b}}$. Let $G_{a,b}$ be the saturation of $\mathrm{im}(T_{a,b})$ with respect to $\Delta^2$.
\item[(iii)] Let $\varepsilon\co \Lambda_n^{\mathscr{B}_{a,b}}\to k^{\mathscr{B}_{a,b}}$ be reduction modulo $(e_1,\dots,e_n)$. Set $\bar{G}_{a,b}:=\varepsilon(G_{a,b})$.
\end{itemize}
\item[(2)] Return $\bar{G}_{a,b}$ for all $1\leq a,b \leq n$.
\end{enumerate}

For a monomial $M\in \mathscr{B}_{a,b}$, let $\bar{M}$ denote its image in $\bar{A}'$; so $\bar{M}$ is a monomial in the generators $\bar{X}_1,\dots,\bar{X}_n$, $\bar{Y}_1,\dots,\bar{Y}_n$ and $\bar{W}_1,\dots,\bar{W}_{n-1}$.

\begin{corollary} \label{cor:main2}
Let $k$ be a field of characteristic zero or prime to $n!$. Then the ideal $\bar{\mathcal{J}}'$ in Theorem \ref{thm:main2} is the $k$-linear span of
\begin{itemize}
\item[(i)] all $\bar{M}$ for $M\in \mathscr{B}_{a,b}$ with $a>n$ or $b> n$;
\item[(ii)] all $\sum_{M\in \mathscr{B}_{a,b}} f(M) \bar{M}$ for $f\in \bar{G}_{a,b}$ with $1\leq a,b \leq n$.
\end{itemize}
\end{corollary}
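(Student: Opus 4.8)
The plan is to reduce the statement to a weight-by-weight bookkeeping exercise, using that the entire construction is homogeneous for the weight $\mathsf{w}$. By Proposition \ref{prop:relationswhenn!inverted} each relation $R_l'=\Delta^2W_l-X^TA_lY$ is homogeneous of weight $(1,1)$, and $\Delta^2\in\Lambda_n$ has weight $(0,0)$; hence the ideal $(R_1',\dots,R_{n-1}')$, its $\Delta^2$-saturation $\mathcal{J}'$, the monomial ideal $\mathcal{M}$, the quotient $\mathcal{I}=\mathcal{J}'/\mathcal{M}$, and the reductions $\bar{\mathcal{M}}$, $\bar{\mathcal{I}}$, $\bar{\mathcal{J}}'$ modulo $(e_1,\dots,e_n)$ are all weight-homogeneous. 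First I would invoke $\mathcal{M}\subseteq\mathcal{J}'$ (Lemma \ref{lem:monomials}) together with the discussion preceding the corollary to get $\bar{A}'/\bar{\mathcal{J}}'\cong(\bar{A}'/\bar{\mathcal{M}})/\bar{\mathcal{I}}$, so that $\bar{\mathcal{J}}'$ is the preimage of $\bar{\mathcal{I}}$ under $\bar{A}'\to\bar{A}'/\bar{\mathcal{M}}$, i.e. the $k$-span of $\bar{\mathcal{M}}$ together with any set of lifts to $\bar{A}'$ of a $k$-spanning set for $\bar{\mathcal{I}}$. Since $\mathcal{M}$ is, as a $\Lambda_n$-module, the span of those monomials having a weight-coordinate larger than $n$, the space $\bar{\mathcal{M}}$ is spanned by exactly the elements in item (i); it therefore remains to produce a spanning set for $\bar{\mathcal{I}}$ and to lift it, and I claim item (ii) does this.

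For the second step I would fix a weight $(a,b)$ with $1\le a,b\le n$ and unwind the algorithm. Because every monomial generator of $\mathcal{M}$ has a weight-coordinate exceeding $n$, no monomial of weight $(a,b)$ lies in $\mathcal{M}$, so the weight-$(a,b)$ part of $A'/\mathcal{M}$ is the free $\Lambda_n$-module $\Lambda_n^{\mathscr{B}_{a,b}}$ on the basis $\mathscr{B}_{a,b}$. By the standard description of the graded pieces of a weight-homogeneous ideal, the weight-$(a,b)$ part of $(\pi(R_1'),\dots,\pi(R_{n-1}'))$ is the $\Lambda_n$-span of $\{\pi(MR_i'):M\in\mathscr{B}_{a-1,b-1},\ 1\le i\le n-1\}$; expanding these in the basis $\mathscr{B}_{a,b}$ is precisely the construction of $T_{a,b}$, so this graded piece is $\mathrm{im}(T_{a,b})\subseteq\Lambda_n^{\mathscr{B}_{a,b}}$. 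Since $\Delta^{2m}f$ has the same weight as $f$, passing to the $\Delta^2$-saturation commutes with taking weight-graded pieces, so the weight-$(a,b)$ part of $\mathcal{I}$ equals the $\Delta^2$-saturation of $\mathrm{im}(T_{a,b})$ inside $\Lambda_n^{\mathscr{B}_{a,b}}$, namely $G_{a,b}$. The reduction $\varepsilon$ modulo $(e_1,\dots,e_n)$ is $\Lambda_n$-linear and preserves the weight decomposition, so the weight-$(a,b)$ part of $\bar{\mathcal{I}}\subseteq\bar{A}'/\bar{\mathcal{M}}$ is $\bar{G}_{a,b}=\varepsilon(G_{a,b})$, a subspace of $k^{\mathscr{B}_{a,b}}$ via the basis $\{\bar{M}:M\in\mathscr{B}_{a,b}\}$. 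Ranging over all $(a,b)$ with $1\le a,b\le n$ — the remaining weights contribute nothing, as $\mathcal{I}$ and hence $\bar{\mathcal{I}}$ are supported in weights with both coordinates at least $1$ — this shows $\bar{\mathcal{I}}$ is $k$-spanned by the classes of the elements $\sum_{M\in\mathscr{B}_{a,b}}f(M)\bar{M}$ with $f\in\bar{G}_{a,b}$; and since the monomials $\bar{M}$ with $M\in\mathscr{B}_{a,b}$, $a,b\le n$, do not lie in $\bar{\mathcal{M}}$, these elements themselves are legitimate lifts. Together with the first paragraph this gives that $\bar{\mathcal{J}}'$ is the $k$-span of (i) and (ii).

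I expect the only genuinely delicate point to be the interaction of the $\Delta^2$-saturation with the two operations of taking weight-graded pieces and of reducing modulo $(e_1,\dots,e_n)$. With the weight grading it is compatible, precisely because $\Delta^2$ has weight $(0,0)$, and this is what licenses the weight-by-weight description; but with the reduction $\varepsilon$ it is \emph{not} compatible in general, which is exactly why the algorithm saturates over $\Lambda_n$ to obtain $G_{a,b}$ first and applies $\varepsilon$ only afterwards. Everything else is routine bookkeeping with the monomial bases $\mathscr{B}_{a,b}$ and the identifications $(A'/\mathcal{M})_{a,b}\cong\Lambda_n^{\mathscr{B}_{a,b}}$ and $(\bar{A}'/\bar{\mathcal{M}})_{a,b}\cong k^{\mathscr{B}_{a,b}}$.
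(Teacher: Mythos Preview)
Your argument is correct and is precisely the detailed verification of the implicit reasoning the paper sets up in the paragraphs between Lemma~\ref{lem:monomials} and the corollary: the paper states $\mathcal{I}=\mathcal{J}'/\mathcal{M}$ and $\bar{A}'/\bar{\mathcal{J}}'\cong(\bar{A}'/\bar{\mathcal{M}})/\bar{\mathcal{I}}$, then describes the algorithm, and leaves the corollary unproved as a direct consequence. Your weight-by-weight unpacking of the algorithm, including the observations that $(A'/\mathcal{M})_{a,b}\cong\Lambda_n^{\mathscr{B}_{a,b}}$ for $1\le a,b\le n$ and that saturation commutes with taking weight pieces because $\mathsf{w}(\Delta^2)=(0,0)$, is exactly what is intended.
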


Of course, it suffices to draw $f$ from a basis of $\bar{G}_{a,b}$, so there are only finitely many of the relations in (ii) to be considered. Using Macaulay2 we compute the following example.

\begin{example}
The rational cohomology ring of $C_2(\mathrm{GL}_2(\C))$ is the quotient
\[
\left(\Q[\bar{W}_1] \otimes {\textstyle \bigwedge_\Q}(\bar{X}_1,\bar{Y}_1,\bar{X}_2,\bar{Y}_2)\right)/ K
\]
where $K$ is the $\Q$-linear span of
\begin{itemize}
\item monomials of weight $(a,b)$ with $a>2$ or $b>2$
\item in weight $(1,1)$: $\bar{X}_2 \bar{Y}_2$
\item in weight $(2,1)$:  $\bar{X}_1 \bar{X}_2 \bar{Y}_2$, $\bar{X}_1 \bar{X}_2 \bar{Y}_1+2\bar{X}_2 \bar{W}_1$
\item in weight $(1,2)$: $\bar{X}_2 \bar{Y}_1 \bar{Y}_2$, $\bar{X}_1 \bar{Y}_1 \bar{Y}_2-2\bar{Y}_2 \bar{W}_1$
\item in weight $(2,2)$: $\bar{X}_1 \bar{X}_2 \bar{Y}_1 \bar{Y}_2$,  $\bar{X}_1 \bar{Y}_2 \bar{W}_1$, $\bar{X}_2 \bar{Y}_1 \bar{W}_1$, $\bar{X}_2 \bar{Y}_2 \bar{W}_1$.
\end{itemize}
\end{example}

For $C_2(\mathrm{GL}_3(\C))$ the $\Q$-vectorspace of relations in bidegrees $(a,b)$ with $a\leq 3$ and $b\leq 3$ is already $150$-dimensional. It would be interesting to find an explicit generating set for the ideal of relations rather than the vectorspace of relations.

\begin{remark} \label{rem:kishimototakeda}
Kishimoto and Takeda \cite{KT21} have first shown that $H^\ast(C_2(U(n));k)$ is minimally generated by $3n-1$ elements when $k$ is a field of characteristic zero or prime to $n!$. But they were unable to determine the relations systematically. Let us explain the relationship between their generators and ours, and give an interpretation in terms of the Chern character. It follows from the formulae in (\ref{eq:explicitformxy}) and Lemma \ref{lem:newtonidentity} that the generators are the same:
\[
\bar{X}_l=z(l,\{1\}),\quad \bar{Y}_l=z(l,\{2\}),\quad \bar{W}_l=z(l,\{1,2\})
\]
for all $l$, where the notation $z(d,I)$ for their generators is introduced just before \cite[Lemma 6.7]{KT21}. Now recall from Section \ref{sec:generators} that there is a map
\[
C_2(U(n)) \to C_2(U)\simeq U^2\times BU
\]
such that primitive generators for $H^\ast(U^2;k)$ pull back to $\bar{X}_l$ and $\bar{Y}_l$, and the Chern classes in $H^\ast(BU;k)$ pull back to $\bar{Z}_l$ (the reduction modulo $(e_1,\dots,e_n)$ of the integral generators $Z_l$). If $c_1,c_2,\dots$ are the universal Chern classes, then $s_l(c_1,\dots,c_l)/l!=ch_l$ is the $l$-th component of the Chern character. Since $W_l$ is defined by $s_l(Z_1,\dots,Z_l)/l$, it follows that the generator $\bar{W}_l$ is the pullback of $(l-1)! ch_l \in H^{2l}(BU;k)$.
\end{remark}

\section{Integral surjectivity of the Atiyah-Bott map} \label{sec:ab}

As a by product of our proofs we obtain some interesting information on a type of map considered by Atiyah and Bott. Let $M=(S^1)^2$ and let $\mathcal{A}^{\flat}(n)$ denote the space of flat connections on the trivial principal $U(n)$-bundle over $M$. Let $\mathscr{G}=\mathrm{map}(M,U(n))$ denote the gauge group. The inclusion of $\mathcal{A}^{\flat}(n)$ into the affine space of all connections $\mathcal{A}(n)$ induces a map
\[
\alpha_n\co \mathcal{A}^{\flat}(n)_{h\mathscr{G}} \to \mathcal{A}(n)_{h\mathscr{G}} \simeq B\mathscr{G}\, .
\]

\begin{proposition} \label{prop:ab}
For every $n\geq 0$, the induced map
\[
\alpha_n^\ast\co H^\ast_{\mathscr{G}}(\mathcal{A}(n);\Z) \to H^\ast_{\mathscr{G}}(\mathcal{A}^{\flat}(n);\Z)
\]
is surjective.
\end{proposition}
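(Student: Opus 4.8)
The plan is to realise each generator of $H^\ast_{U(n)}(C_2(U(n));\Z)=H^\ast_{\mathrm{GL}_n(\C)}(C_2(\mathrm{GL}_n(\C));\Z)$ occurring in Theorem~\ref{thm:main} -- the subring $\Lambda_n$ and the classes $X_l,Y_l,Z_l$ for $l\le n$ -- as a characteristic class of a bundle pulled back along $\alpha_n$ from $B\mathscr{G}$. Since these classes generate the whole ring over $\Z$ (Proposition~\ref{prop:3ngenerators} and Theorem~\ref{thm:main}) and $\alpha_n^\ast$ is a ring homomorphism, surjectivity follows at once. The first step is to make the identifications precise: holonomy identifies $\mathcal{A}^\flat(n)$ modulo the based gauge group with $\Hom(\Z^2,U(n))=C_2(U(n))$, and passing further to the quotient by $\mathscr{G}/\mathscr{G}_0\simeq U(n)$ gives $\mathcal{A}^\flat(n)_{h\mathscr{G}}\simeq C_2(U(n))_{hU(n)}$; since $\mathcal{A}(n)$ is contractible and $\mathscr{G}=\mathrm{map}((S^1)^2,U(n))$, we have $\mathcal{A}(n)_{h\mathscr{G}}=B\mathscr{G}\simeq \mathrm{map}((S^1)^2,BU(n))_0$, and under these identifications $\alpha_n$ becomes the map sending a representation to the classifying map of its flat bundle. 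Let $\mathbb{E}\to B\mathscr{G}\times(S^1)^2$ be the rank-$n$ bundle associated to the universal $\mathscr{G}$-bundle; then $(\alpha_n\times\mathrm{id})^\ast\mathbb{E}$ is the universal flat bundle $\mathbb{E}_n$ over $C_2(U(n))_{hU(n)}\times(S^1)^2$.

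The engine of the proof is fibre integration in complex $K$-theory. The torus $(S^1)^2$ and the submanifolds $\{\ast\}$, $S^1\times\{\ast\}$, $\{\ast\}\times S^1$ are stably framed, hence $ku$-oriented, so the composites ``restrict to $C_2(U(n))_{hU(n)}\times(\text{submanifold})$, then fibre-integrate down to $C_2(U(n))_{hU(n)}$'' are defined and are natural with respect to base change. Applying this to the pullback square expressing $\alpha_n\times\mathrm{id}$ over $\alpha_n$, the resulting classes -- the restriction of $\mathbb{E}_n$ to a point of $(S^1)^2$, the two $ku^{-1}$-valued fibre integrals along the coordinate circles, and the $ku$-valued fibre integral $\pi_!\mathbb{E}_n$ along all of $(S^1)^2$ -- are all pulled back along $\alpha_n$ from the corresponding constructions applied to $\mathbb{E}$. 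Hence all of their Chern classes (including the odd ``suspended'' Chern classes coming from the $ku^{-1}$-valued integrals, i.e. from the resulting maps to $U$) lie in $\im(\alpha_n^\ast)$, and so does the entire subring they generate. In particular the point-restriction is the underlying $U(n)$-bundle, so $\Lambda_n=\im\bigl(H^\ast(BU(n))\to H^\ast_{U(n)}(C_2(U(n)))\bigr)\subseteq\im(\alpha_n^\ast)$.

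It remains to match these classes with the generators of Theorem~\ref{thm:main}, and this is the crux. Unwinding the equivalence $\mathscr{C}_2\simeq ku((S^1)^2_+)$ and the splitting $ku\wedge(S^1)^2_+\simeq ku\vee\Sigma ku\vee\Sigma ku\vee\Sigma^2 ku$ from Section~\ref{sec:generators}, the fibre integrals along the two circles are exactly the maps to $U$ through which $X_l,Y_l$ of~\eqref{eq:explicitformxy} are defined, and the $K$-theoretic fibre integral $\pi_!\mathbb{E}_n$ along $(S^1)^2$ is the virtual bundle whose Chern classes are the $Z_l$ of~\eqref{eq:explicitformz}. This last identification is precisely why one must use the $K$-theoretic Gysin map rather than the ordinary slant product: over $\Z$ the classes $c_k(\mathbb{E}_n)/[(S^1)^2]$ only generate the proper ``power-sum'' subalgebra $\Lambda_n[X_l,Y_l,\,s_k(Z_1,\dots,Z_k)]$, whereas $Z_l=c_l(\pi_!\mathbb{E}_n)$ holds on the nose (the two agree rationally only because the Todd class of the parallelisable surface $(S^1)^2$ is trivial). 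I expect getting this dictionary right -- in particular keeping track of the Bott/degree shifts and checking that the $ku$-orientation used here is the one implicit in $\mathscr{C}_2\simeq ku((S^1)^2_+)$ -- to be the main technical work, though it is essentially a repackaging of the computations already carried out in Section~\ref{sec:generators}. With the dictionary in place, all generators of Theorem~\ref{thm:main} lie in $\im(\alpha_n^\ast)$, and the proposition follows.
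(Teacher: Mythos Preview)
Your approach is sound and would go through, but it is a genuinely different and more laborious route than the paper's. The paper's proof is a short diagram chase: after making the same identifications of $\alpha_n$ with the map $C_2(U(n))_{hU(n)}\to \mathrm{map}^0((S^1)^2,BU(n))$, it places $\alpha_n$ in a commutative square over $\alpha_\infty\co C_2(U)_{hU}\to \mathrm{map}^0((S^1)^2,BU)$, cites a theorem of Ramras that $\alpha_\infty$ is a homotopy equivalence, and concludes from Corollary~\ref{cor:gcsurjective} (surjectivity of $\gamma^\ast$) that $\alpha_n^\ast$ is surjective. No generator-by-generator matching is needed.

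The ``dictionary'' you flag as the crux --- identifying the $K$-theoretic fibre integrals of the universal bundle with the factors of the splitting $\mathscr{C}_2^{gp}\simeq (\Z\times BU)\times U^2\times BU$ that define $X_l,Y_l,Z_l$ --- is exactly what Ramras's equivalence encodes: under $\alpha_\infty$ the cell-splitting of $ku\wedge (S^1)^2_+$ becomes the decomposition of $\mathrm{map}^0((S^1)^2,BU)$ by fibre integration over the cells of $(S^1)^2$. So the paper absorbs your main technical step into a single citation. What your approach buys is explicit preimages in $H^\ast(B\mathscr{G})$ for each generator, and your observation that over $\Z$ one must use the $K$-theoretic Gysin map (so that $Z_l=c_l(\pi_!\mathbb{E}_n)$) rather than cohomological slant products (which only produce the Newton power sums $s_l(Z_1,\dots,Z_l)$) is correct and illuminating --- it explains concretely why Atiyah--Bott's original cohomological generators would not suffice integrally. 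The paper's proof is cleaner but less explicit on this point.
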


In \cite{AB} Atiyah and Bott study this type of map for the space of flat connections on principal $U(n)$-bundles over a closed orientable surface of genus $g\geq 2$, in the case where the rank $n$ and the degree of the bundle are coprime. They show that the corresponding equivariant cohomology rings are torsion free and the map is surjective with rational coefficients (see \cite[Theorem 7.14]{AB}). In the genus $1$ case, we show that the map is surjective with integral coefficients. From this perspective, our main theorem (Theorem \ref{thm:main}) may be viewed as a description of the kernel of this map.

\begin{proof}
The map $\alpha_n$ has the following alternative description: The classifying space of $\mathscr{G}$ is the connected component of the mapping space $\mathrm{map}(M,BU(n))$ consisting of the null homotopic maps (see \cite[Proposition 2.4]{AB}). We denote it by
\[
B\mathscr{G} \simeq \mathrm{map}^0(M,BU(n))\, .
\]
Sending a representation $\pi_1(M)=\Z^2 \to U(n)$ to the classifying map $M\to BU(n)$ of the flat bundle it represents, gives a map up to homotopy
\[
C_2(U(n)) \to \mathrm{map}_\ast(M,BU(n))\,.
\]
Because the space $C_2(U(n))$ is connected, the image of this map is contained in the connected component of the nullhomotopic maps. Passing to homotopy orbits, and using the fact that
\[
\mathrm{map}^0_\ast(M,BU(n))_{hU(n)} \simeq \mathrm{map}^0(M,BU(n))\,,
\]
we obtain a map
\[
\alpha_n \co C_2(U(n))_{hU(n)} \to \mathrm{map}^0(M,BU(n)) \simeq B\mathscr{G}\, .
\]
The proposition claims that the induced map
\[
\alpha_n^\ast\co H^\ast(\mathrm{map}^0(M,BU(n)) ;\Z)\to H^\ast_{U(n)}(C_2(U(n);\Z)
\]
is surjective. Now there is a commutative diagram
\[
\xymatrix{
C_2(U(n))_{hU(n)} \ar[d] \ar[rr]^-{\alpha_n} && \mathrm{map}^0(M,BU(n)) \ar[d] \\
C_2(U)_{hU} \ar[rr]_-{\sim}^-{\alpha_{\infty}} && \mathrm{map}^0(M,BU) 
}
\]
in which the vertical maps are induced by the inclusion $U(n)\to U$. Ramras has proved that the bottom horizontal map is an equivalence (see \cite[Theorem 3.4]{RModuli}). The left vertical map is surjective in integral cohomology by Corollary \ref{cor:gcsurjective}. It follows that the top horizontal map is surjective in integral cohomology as well.
\end{proof}

\appendix

\section{$\Gamma$-spaces and the spectral sequence} \label{sec:appendix}

The purpose of this appendix is to explain the origin of the spectral sequence of Proposition \ref{prop:ss}, especially its multiplicative structure.

We recall the notion of a $\Gamma$-space (see e.g. the appendix of \cite{Global}). Let $\Topp$ be the category of compactly generated spaces. Let $\Fin_\ast$ be the category of finite based sets of the form $\langle n\rangle=\{\ast,1,\dots,n\}$, $n\geq 0$ with basepoint $\ast$.

\begin{definition}
A \emph{$\Gamma$-space} is a functor $F\co \Fin_\ast\to \Topp$ such that $F(\ast)=\ast$.
\end{definition}

For $n\geq 1$ and $1\leq i \leq n$ let $\rho^i\co \langle n\rangle \to \langle 1\rangle$ be the based map defined by $\rho^i(j)=1$ if $i=j$ and $\rho^i(j)=\ast$ if $i\neq j$.

\begin{definition}
A $\Gamma$-space $F\co \Fin_\ast\to \Topp$ is called \emph{special} if for every $n\geq 1$ the map $F(\langle n\rangle)\to F(\langle 1\rangle)^n$ induced by $\rho^1,\dots,\rho^n$ is a weak homotopy equivalence.
\end{definition}

Let $F$ be a special $\Gamma$-space, and let $\nabla\co \langle 2\rangle \to \langle 1\rangle$ denote the based map with $\nabla(1)=\nabla(2)=1$. Then there is a zig-zag 
\begin{equation} \label{eq:zigzag}
\xymatrix{
F(\langle 1\rangle)^2 && \ar[ll]_-{F(\rho^1)\times F(\rho^2)}^-{\sim} F(\langle \langle 2\rangle) \ar[r]^-{F(\nabla)} & F(\langle 1\rangle)
}
\end{equation}
in which the arrow pointing to the left is a weak equivalence. Since $\nabla$ is invariant under the $C_2$-action flipping $1,2\in \langle 2\rangle$, this zig-zag exhibits $F(\langle 1\rangle)$ as a commutative monoid in the homotopy category. Let $k$ be a commutative ring. On homology with coefficients in $k$ we obtain a product
\[
H_\ast(F(\langle 1\rangle);k)^{\otimes 2}\to H_\ast(F(\langle 1\rangle);k)\,,
\]
making $H_\ast(F(\langle 1\rangle);k)$ into a graded-commutative $k$-algebra. 

\begin{example} \label{ex:ku}
Let $\C^\infty$ be equipped with the standard Hermitian inner product. Let $\mathrm{Gr}(n,\C^\infty)$ denote the Grassmannian of $n$-dimensional complex subspaces of $\C^\infty$. Define $ku\co \Fin_\ast\to \Topp$ by setting $ku(\langle 1\rangle)=\bigsqcup_{n\geq 0} \mathrm{Gr}(n,\C^\infty)$ and
\[
ku(\langle n\rangle) \subseteq ku(\langle 1\rangle)^n
\]
the subspace of those $n$-tuples $(V_1,\dots,V_n)$ for which the $V_i$ are pairwise orthogonal. On a based map $\alpha\co \langle n\rangle\to \langle m\rangle$ define $ku(\alpha)\co ku(\langle n\rangle)\to ku(\langle m\rangle)$ by
\[
ku(\alpha)(V_1,\dots,V_n)=(W_1,\dots,W_m)\,,
\]
where $W_j=\bigoplus_{i\in \alpha^{-1}(j)} V_i$ (so $W_j=0$ if $\alpha^{-1}(j)=\emptyset$). This is a special $\Gamma$-space (see e.g. \cite[Theorem 6.3.19]{Global}). The underlying graded commutative $k$-algebra $H_\ast(ku(\langle 1\rangle);k)$ is a polynomial algebra $k[x_0,x_1,x_2,\dots]$, where $|x_i|=2i$.
\end{example}

A $\Gamma$-space $F$ can be prolonged to a functor on based simplicial sets: Let $X$ be a based simplicial set. We can evaluate $F$ levelwise on $X$ (by a filtered colimit in case $X_n$ is infinite). This gives a simplicial space $F(X)$ of which we can then take the geometric realisation $|F(X)|$.

The Loday construction (see Section \ref{sec:loday}) and $\Gamma$-spaces are related in the following way. Taking homology in every simplicial degree, we obtain a functor $H_\ast\circ F$ from $\sset_\ast$ into simplicial graded commutative $k$-algebras. 

\begin{lemma} \label{lem:functorhomology}
Let $F$ be a special $\Gamma$-space and $k$ a commutative ring such that $H_\ast(F(\langle 1\rangle);k)$ is flat as a $k$-module. Then there is a natural isomorphism
\[
\mathcal{L}(H_\ast(F(\langle 1\rangle);k);k) \cong H_\ast\circ F
\]
of functors from $\sset_\ast$ to simplicial graded commutative $k$-algebras.
\end{lemma}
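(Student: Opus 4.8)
The plan is to verify the natural isomorphism $\mathcal{L}(H_\ast(F(\langle 1\rangle);k);k) \cong H_\ast\circ F$ by the universal property of the Loday construction: the functor $\mathcal{L}(A;k)\co \sset_\ast\to (\text{simplicial graded commutative }k\text{-algebras})$ is, in each simplicial degree, the unique colimit-preserving extension of the assignment $\langle 1\rangle_+ \text{-ish data}\mapsto A$ along the generation of $\Fin_\ast$ (hence $\sset_\ast$ levelwise) under finite wedges. So it suffices to produce a natural isomorphism between the two functors on the full subcategory $\Fin_\ast\subseteq \sset_\ast$ of finite based sets (viewed as discrete simplicial sets), compatible with the algebra structure, and then extend by colimits.

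First I would set $A = H_\ast(F(\langle 1\rangle);k)$, which by hypothesis is $k$-flat, and recall that $A$ carries an augmentation $A\to H_\ast(F(\ast);k)=H_\ast(\ast;k)=k$ coming from the based map $\langle 1\rangle\to \ast$, so that $\mathcal{L}(A;k)$ is defined. Next, for a finite based set $\langle n\rangle$, I would use that $F$ is special: the Segal maps $F(\langle n\rangle)\to F(\langle 1\rangle)^n$ are weak equivalences, so $H_\ast(F(\langle n\rangle);k)\cong H_\ast(F(\langle 1\rangle)^n;k)$, and by the Künneth theorem (here the $k$-flatness of $A$ is used) this is $A^{\otimes n}$. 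On the other hand $\mathcal{L}(A;k)(\langle n\rangle)$ is by construction the pushout in $\dgca_k$ of $k\leftarrow A^{\otimes (n+1)}\to A^{\otimes(n+1)}$ along the augmentation on the extra (basepoint) factor, which is $A^{\otimes n}$; so the two agree as $k$-algebras on objects. I would then check naturality in $\Fin_\ast$: a based map $\alpha\co\langle n\rangle\to\langle m\rangle$ induces $F(\alpha)$, and under the special-$\Gamma$-space identification the induced map on $H_\ast$ is the one built out of the multiplication on $A$ (via the zig-zag $(\ref{eq:zigzag})$) and permutation/insertion of unit factors — this is exactly the functoriality built into $\mathcal{L}(A;k)$ by its colimit-preserving definition, since both assign to a map the unique $k$-algebra map determined on the generating factors. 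This requires a small diagram chase comparing the Segal-map identification with the pushout description of $\mathcal{L}(A;k)$; it is the place where one must be careful that the commutative-monoid structure on $H_\ast(F(\langle 1\rangle);k)$ used in $\mathcal{L}$ really is the one coming from $\nabla$.

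Having matched the two functors on $\Fin_\ast$, I would promote to all of $\sset_\ast$ levelwise: a based simplicial set $X$ has $X_q\in\Fin_\ast$ (up to filtered colimit when $X_q$ is infinite), both sides commute with the relevant colimits — $\mathcal{L}(A;k)$ by construction, and $H_\ast\circ F$ because $F$ is prolonged levelwise by filtered colimits and homology commutes with filtered colimits — so the levelwise isomorphism is natural in $[q]\in\Delta^{\mathrm{op}}$, giving an isomorphism of simplicial graded commutative $k$-algebras. Finally I would note that the algebra structures match: the product on $\mathcal{L}(A;k)(X)$ is induced functorially from the fold maps on based sets, which on the $F$ side is precisely the multiplication in the $\Gamma$-space $F$, so the identification is multiplicative. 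The main obstacle I expect is the bookkeeping in the naturality check — specifically, confirming that the map $F(\alpha)_\ast$ on homology, after passing through the Segal equivalences, is the canonical algebra map $\mathcal{L}(A;k)(\alpha)$ built from $A$'s multiplication and unit; all the genuinely homotopical content (that the zig-zag $(\ref{eq:zigzag})$ really makes $A$ a commutative $k$-algebra, not just a commutative monoid in the homotopy category of spaces) has to be invoked here, and $k$-flatness of $A$ is exactly what makes Künneth turn weak equivalences of $\Gamma$-space values into tensor-power isomorphisms of algebras.
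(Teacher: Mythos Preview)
Your proposal is correct and follows essentially the same approach as the paper: reduce to finite pointed sets $\langle n\rangle$ and use specialness of $F$ together with the K\"unneth isomorphism (this is where $k$-flatness of $H_\ast(F(\langle 1\rangle);k)$ enters) to identify both sides with $A^{\otimes n}$; the paper's proof is a one-line version of your argument, leaving the naturality check and the levelwise extension to $\sset_\ast$ implicit. One small slip: the pushout you want is $k\leftarrow A\to A^{\otimes(n+1)}$ (augmentation on the basepoint tensor factor), not $k\leftarrow A^{\otimes(n+1)}\to A^{\otimes(n+1)}$ as written.
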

\begin{proof}
It suffices to check this on finite pointed sets $\langle n\rangle\in \Fin_\ast$, where the isomorphism
\[
H_\ast(F(\langle n\rangle);k)\cong H_\ast(F\langle 1\rangle);k)^{\otimes n} \cong \mathcal{L}(H_\ast(F(\langle 1\rangle);k);k) (\langle n\rangle)
\]
is induced by $\rho^1,\dots,\rho^n$ using the K{\"u}nneth isomorphism.
\end{proof}

Let $X\in \sset_\ast$ be a finite based simplicial set, i.e., $X_n$ is finite for all $n$. Let $F$ be a special $\Gamma$-space. Then, for every $n$ there is a zig-zag
\begin{equation} \label{eq:zigzaggammaspace}
F(X_n)\times F(X_n) \xleftarrow{\sim} F(X_n\vee X_n) \to F(X_n)\,.
\end{equation}
The map pointing to the left is induced by the two maps $X_n\vee X_n\to X_n$ which are the identity on one summand and the constant map on the other summand; it is a weak homotopy equivalence, because $F$ is special. The map pointing to the right is induced by the fold map. The zig-zag is natural with respect to the simplicial structure, so it gives a zig-zag of simplicial spaces.

There is a technical condition on a $\Gamma$-space, called \emph{cofibrancy} (see \cite[Definition B.33]{Global}), which assures that $F(X)$ is a Reedy-cofibrant simplicial space.

\begin{example}
The $\Gamma$-space $ku$ from Example \ref{ex:ku} is cofibrant (see \cite[Example 6.3.16]{Global})
\end{example}

A levelwise weak equivalence of Reedy cofibrant simplicial spaces gives a weak equivalence on geometric realisations, so if $F$ is cofibrant there is a zig-zag
\[
|F(X)|\times |F(X)| \xleftarrow{\sim} |F(X\vee X)|\to |F(X)|\, ;
\]
taking homology we see that $H_\ast(|F(X)|;k)$ becomes a graded commutative $k$-algebra.

\begin{remark} \label{rem:underlyingspace}
If $X$ is a finite pointed simplicial set and $F$ is a cofibrant special $\Gamma$-space, then $\langle n\rangle \mapsto |F(X\wedge \langle n\rangle)|$ is again a cofibrant special $\Gamma$-space (\cite[Proposition B.54]{Global}), and $|F(X)|$ is just its underlying space.
\end{remark}

We expect that the following statement is well-known to the experts, but we were unable to find a reference that includes the statement about the multiplicativity of the spectral sequence.

\begin{proposition} \label{prop:ssgeneral}
Let $F$ be a cofibrant special $\Gamma$-space. View $H_\ast(F(\langle 1\rangle);k)$ as an augmented $k$-algebra via $F(\langle 1 \rangle)\to \ast$ and assume that it is a flat $k$-module. Let $X$ be a finite pointed simplicial set. Then there is a first-quadrant spectral sequence of algebras,
\[
E^2=\mathrm{HH}^X_\ast(H_\ast(F(\langle 1\rangle);k);k)\,\Longrightarrow\, H_\ast(|F(X)|;k)\, .
\]
A morphism of $\Gamma$-spaces $f\co F\to G$ induces a map of spectral sequences. On the $E^2$-page this map agrees with the map induced by the map of algebras
\[
f_\ast\co H_\ast(F(\langle 1\rangle);k)\to H_\ast(G(\langle 1\rangle);k)\, .
\]
\end{proposition}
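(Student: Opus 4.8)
The plan is to identify the desired spectral sequence with the homology spectral sequence of the skeletal filtration of the geometric realisation of the simplicial space $[p]\mapsto F(X_p)$, and then to extract its multiplicative structure from the zig-zag (\ref{eq:zigzaggammaspace}). Since $X$ is finite pointed, $F(X)=\{[p]\mapsto F(X_p)\}$ is an honest simplicial space, and cofibrancy of $F$ makes it Reedy cofibrant, so $|F(X)|$ is its homotopy colimit; the skeletal filtration then gives a first-quadrant (hence strongly convergent) spectral sequence with $E^1_{p,q}=H_q(F(X_p);k)$, with $d^1$ the alternating sum of the face maps, abutting on $H_{p+q}(|F(X)|;k)$. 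To identify the $E^2$-page I would apply Lemma \ref{lem:functorhomology} levelwise to $X$: the simplicial graded $k$-module $[p]\mapsto H_\ast(F(X_p);k)$ is naturally isomorphic to $\mathcal{L}(A;k)(X)$, where $A=H_\ast(F(\langle 1\rangle);k)$ is the augmented algebra in the statement, flat by hypothesis. As $A$ carries the trivial internal differential, the complex $(E^1_{\ast,q},d^1)$ is precisely the internal-degree-$q$ slice of the total complex $|\mathcal{L}(A;k)(X)|$ (whose horizontal differential vanishes), so $\bigoplus_{p+q=n}E^2_{p,q}\cong\HH^X_n(A;k)$, with the bigrading as described. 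I also record that, since $F$ is special, $H_\ast(F(X_p);k)\cong A^{\otimes(\#X_p-1)}$, so each row $E^1_{\ast,q}$ is $k$-flat; this will be used below.

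The multiplicative structure is where the real work lies. The product on $H_\ast(|F(X)|;k)$ is obtained by realising (\ref{eq:zigzaggammaspace}), which is a zig-zag of simplicial spaces natural in $[p]$; its left arrow $F(X_\bullet\vee X_\bullet)\to F(X_\bullet)\times F(X_\bullet)$ is a levelwise weak equivalence of Reedy cofibrant simplicial spaces, and one uses the homeomorphism $|F(X_\bullet)\times F(X_\bullet)|\cong|F(X)|\times|F(X)|$. The skeletal spectral sequence is functorial in maps of Reedy cofibrant simplicial spaces, so the fold arrow induces a map of spectral sequences and the equivalence arrow an isomorphism on every page (an isomorphism already on $E^1$, being a levelwise homology isomorphism). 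The ingredient still needed is an external pairing $E_r(F(X_\bullet))\otimes E_r(F(X_\bullet))\to E_r\bigl(F(X_\bullet)\times F(X_\bullet)\bigr)$ of spectral sequences. I would construct it from the Eilenberg--Zilber (shuffle) map $C_\ast(F(X_p);k)^{\otimes 2}\to C_\ast(F(X_p)^{\times 2};k)$, natural in $[p]$: this yields a morphism from the bicomplex computing the tensor product of the two spectral sequences into the bicomplex of the levelwise product $F(X_\bullet)\times F(X_\bullet)$, a quasi-isomorphism in each fixed simplicial degree $p$; by the flatness of the rows $E^1_{\ast,q}$ it is a Künneth isomorphism on $E^1$, hence on every later page. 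Composing this pairing with the two arrows of the realised zig-zag produces a product on $E_r(F(X_\bullet))$ which, being obtained from a morphism out of the (Leibniz-differential) tensor product $E_r\otimes E_r$, makes it a spectral sequence of algebras, and which converges to the product on $H_\ast(|F(X)|;k)$. Finally, on $E^2\cong\HH^X_\ast(A;k)$ this product agrees with the shuffle product, because the zig-zag restricted to each simplicial level induces precisely the levelwise multiplication of $\mathcal{L}(A;k)(X_p)$ — Lemma \ref{lem:functorhomology} being an isomorphism of \emph{algebra}-valued functors — and the shuffle product on $\HH^X_\ast(A;k)$ is by definition assembled from those levelwise products via Eilenberg--Zilber. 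The bookkeeping that identifies the external pairing and verifies this last compatibility on $E^2$ is the main obstacle; everything else is standard.

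It remains to address naturality. A morphism $f\colon F\to G$ of $\Gamma$-spaces induces a map of simplicial spaces $F(X_\bullet)\to G(X_\bullet)$, hence a map of the associated skeletal spectral sequences; on $E^1$ it is $f_\ast$ in each simplicial degree, so by the naturality part of Lemma \ref{lem:functorhomology} it is, on $E^2$, the map $\HH^X_\ast(A;k)\to\HH^X_\ast(A';k)$ induced by $f_\ast\colon A\to A'$ on coefficient algebras. Since $f$ is compatible with the zig-zags (\ref{eq:zigzaggammaspace}) and with the Eilenberg--Zilber maps, this is a morphism of multiplicative spectral sequences, completing the plan.
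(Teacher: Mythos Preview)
Your outline correctly identifies the spectral sequence with the skeletal filtration of $|F(X)|$ and correctly invokes Lemma~\ref{lem:functorhomology} for the $E^2$-identification; the naturality argument is also fine. The gap is in the construction of the external pairing.

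The levelwise Eilenberg--Zilber map $C_\ast(F(X_p))^{\otimes 2}\to C_\ast(F(X_p)^{\times 2})$, natural in $[p]$, gives a map from the bicomplex $[p]\mapsto C_\ast(F(X_p))\otimes C_\ast(F(X_p))$ (the \emph{diagonal} tensor product) into the bicomplex of the levelwise product. But the first of these does \emph{not} compute $E^r\otimes E^r$: its row at simplicial degree $p$ has $E^1$-term $H_\ast(F(X_p))\otimes H_\ast(F(X_p))$, whereas $(E^1\otimes E^1)_{p,\ast}=\bigoplus_{p'+p''=p}H_\ast(F(X_{p'}))\otimes H_\ast(F(X_{p''}))$. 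To pass from the latter to the former you need a further shuffle in the \emph{simplicial} direction (via degeneracies), or equivalently a filtered map from $|F(X)|\times|F(X)|$ with the product filtration into $|F(X)\times F(X)|$ with the skeletal filtration. This is exactly what the paper supplies: Lemma~\ref{lem:pairing1} gives the filtered cross product into the product filtration, and Lemma~\ref{lem:pairing2} shows that composing with the filtration-preserving homeomorphism $\zeta$ (whose inverse is \emph{not} filtration-preserving) yields $\times\circ\mathrm{sh}$ on $E^1$, with $\mathrm{sh}$ built from simplicial degeneracies. Without this step you do not actually have a pairing $E^r\otimes E^r\to E^r(\text{product})$.

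The same omission resurfaces in your identification of the $E^2$-product with the shuffle product: the shuffle product on $\HH^X_\ast$ (Definition~\ref{def:shuffle}) is assembled from simplicial degeneracies followed by the levelwise algebra multiplication, not from a chain-level Eilenberg--Zilber map. So the ``bookkeeping'' you flag as the main obstacle is not bookkeeping at all --- it is the substance of the multiplicativity statement, and the diagram chase in Lemma~\ref{lem:pairing2} is where the paper carries it out.
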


The rest of the appendix is devoted to the proof of the proposition. Throughout $k$ denotes a commutative ring and homology groups are taken with $k$-coefficients. We first explain the content of \cite[Theorem 11.14]{Mayiteratedloopspaces}. Let $X$ be a simplicial space and $F_\ast |X|\subseteq |X|$ the skeletal filtration of its geometric realisation. To this filtration corresponds a spectral sequence
\[
E^1_{p,q}=H_{p+q}(F_p|X|,F_{p-1}|X|)\Longrightarrow H_{p+q}(|X|)\, .
\]
The differential $d^1\co E^1_{p,q}\to E^1_{p-1,q}$ is the boundary homomorphism in the long exact homology sequence of the triple $(F_p|X|,F_{p-1}|X|,F_{p-2}|X|)$.

For fixed $q$, let $H_q(X)$ be the simplicial $k$-module obtained from $X$ by taking $q$-th homology levelwise. As usual, we may view $H_q(X)$ as a chain complex of $k$-modules by taking the alternating sum of the face maps as the differential. For every $q$, there is a chain map
\[
f_q\co H_q(X) \to E^1_{\ast,q}
\]
which in degree $p$ is the composite
\[
H_q(X_p) \xrightarrow{\times \Delta^p} H_{p+q}(X_p\times \Delta^p,X_p\times \partial \Delta^p) \xrightarrow{\pi_\ast} E^1_{p,q}\, .
\]
The first map is the suspension isomorphism, given by the cross product with a $p$-simplex, and the second map is induced by the projection
\[
\pi\co X_p\times \Delta^p\to F_p|X|\,.
\]
If $X$ is Reedy cofibrant (or proper, in the sense of \cite[Definition 11.2]{Mayiteratedloopspaces}), then the restriction of $f_q$ to the normalised chain complex $NH_q(X_\ast) \subseteq H_q(X_\ast)$ is an isomorphism onto $E^1_{\ast,q}$, and hence $f_q$ is a quasi-isomorphism. The spectral sequence then takes the form
\[
E^2_{p,q}=H_p(H_q(X)) \Longrightarrow H_{p+q}(|X|)\, .
\]

Now let $Y$ be another simplicial space and $F_\ast|Y|\subseteq |Y|$ the skeletal filtration. We equip the product $|X|\times |Y|$ with the product filtration
\[
F_p(|X|\times |Y|)=\bigcup_{k+l=p} F_k|X|\times F_l|Y|\, .
\]
Let $E^r(|X|)$, $E^r(|Y|)$ and $E^r(|X|\times |Y|)$ denote the spectral sequences associated with the respective filtrations. The homology cross product and the inclusion $F_p|X|\times F_r|Y|\subseteq F_{p+r}(|X|\times |Y|)$ induce a natural cross product
\[
\times\co H_{p+q}(F_p|X|)\otimes H_{r+s}(F_r|Y|) \to H_{p+q+r+s}(F_{p+r}(|X|\times |Y|))\, .
\]
Passing to relative homology, this induces a cross product
\[
\times\co E^1_{p,q}(|X|)\otimes E^1_{r,s}(|Y|) \to E^1_{p+r,q+s}(|X|\times |Y|)\, . 
\]

\begin{lemma} \label{lem:pairing1}
There is a pairing of spectral sequences
\[
E^r(|X|) \otimes E^r(|Y|) \to E^r(|X|\times |Y|)\, .
\]
On $E^1$-pages the pairing is given by the $\times$-product, and on $E^\infty$-pages it coincides with the pairing induced by the cross product
\[
\times\co H_{p+q}(|X|) \otimes H_{r+s}(|Y|)\to H_{p+q+r+s}(|X|\times |Y|)\, .
\]
\end{lemma}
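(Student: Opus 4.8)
The plan is to realise all three spectral sequences as the spectral sequences of filtered chain complexes and to extract the pairing from a filtration-compatible chain-level cross product. Filter the singular chain complex by $F_pC_\ast(|X|)=C_\ast(F_p|X|)$, and likewise for $|Y|$ and for $|X|\times|Y|$ equipped with its product filtration; the associated homology spectral sequences are exactly $E^r(|X|)$, $E^r(|Y|)$ and $E^r(|X|\times|Y|)$ (this is how the spectral sequence of \cite[Theorem 11.14]{Mayiteratedloopspaces} arises, with $E^1_{p,q}=H_{p+q}(F_p|X|,F_{p-1}|X|)$ and with abutment $H_\ast(|X|)$ filtered by the images of the homologies of the skeleta). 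The singular (Eilenberg--Zilber) cross product gives a chain map $\mu\co C_\ast(|X|)\otimes C_\ast(|Y|)\to C_\ast(|X|\times|Y|)$, and by the very definition $F_p|X|\times F_r|Y|\subseteq F_{p+r}(|X|\times|Y|)$ of the product filtration it carries $F_pC_\ast(|X|)\otimes F_rC_\ast(|Y|)$ into $F_{p+r}C_\ast(|X|\times|Y|)$. Thus $\mu$ is a morphism of filtered complexes out of $C_\ast(|X|)\otimes C_\ast(|Y|)$ endowed with the tensor-product filtration $F_n=\sum_{p+r=n}F_p\otimes F_r$.

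The next step is the standard algebraic pairing of spectral sequences attached to any two filtered complexes $A,B$ over $k$: an $E^r$-class of $A$ is represented by $a\in F_pA$ with $da\in F_{p-r}A$, and if $b\in F_qB$ with $db\in F_{q-r}B$ represents an $E^r$-class of $B$, then $d(a\otimes b)=da\otimes b\pm a\otimes db$ lies in $F_{p-r}A\otimes F_qB+F_pA\otimes F_{q-r}B\subseteq F_{p+q-r}(A\otimes B)$, so $[a]\otimes[b]\mapsto[a\otimes b]$ defines a map $E^r(A)\otimes E^r(B)\to E^r(A\otimes B)$; well-definedness modulo the relevant subgroups and the Leibniz compatibility with $d^r$ are routine, and no flatness hypothesis is needed since only the existence of this natural map is required (not that it be an isomorphism). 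Composing with $E^r(\mu)$ produces the pairing $E^r(|X|)\otimes E^r(|Y|)\to E^r(|X|\times|Y|)$, visibly natural in $X$ and $Y$.

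It remains to identify the pairing at the two ends. On the $E^1$-page one has $E^1_{p,q}(A)=H_{p+q}(\mathrm{gr}_pA)=H_{p+q}(F_p|X|,F_{p-1}|X|)$, and under the canonical isomorphism $\mathrm{gr}_n(A\otimes B)\cong\bigoplus_{p+r=n}\mathrm{gr}_pA\otimes\mathrm{gr}_rB$ (valid over any $k$ by right-exactness of $\otimes$) the algebraic pairing $E^1(A)\otimes E^1(B)\to E^1(A\otimes B)$ is precisely the homology cross product, while $E^1(\mu)$ is the map on relative homology induced by the inclusion $F_p|X|\times F_r|Y|\subseteq F_{p+r}(|X|\times|Y|)$; their composite is therefore the $\times$-product on $E^1$ defined just above the statement of the lemma. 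On the $E^\infty$-page the abutment filtration is $F_pH_n(|X|)=\im(H_n(F_p|X|)\to H_n(|X|))$, and since $\mu$ sends $F_pC_\ast(|X|)\otimes F_rC_\ast(|Y|)$ into $F_{p+r}C_\ast(|X|\times|Y|)$, the cross product $H_\ast(|X|)\otimes H_\ast(|Y|)\to H_\ast(|X|\times|Y|)$ respects these filtrations and the $E^\infty$-pairing is its associated graded. The step deserving care is the general algebraic input — that a filtration-preserving chain pairing induces a pairing of the associated spectral sequences converging to the pairing on homology — but this is entirely standard (and the $E^1$ and $E^\infty$ identifications above are exactly its content); the ingredient specific to our setting, namely that $\mu$ preserves filtrations, is immediate from the definition of the product filtration, so I do not anticipate a real obstacle.
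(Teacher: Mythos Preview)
Your proof is correct and follows essentially the same approach as the paper: both argue that the Eilenberg--Zilber map $C_\ast(|X|)\otimes C_\ast(|Y|)\to C_\ast(|X|\times|Y|)$ is filtration-preserving for the product filtration, and then appeal to the standard fact that a filtration-preserving chain pairing induces a pairing of the associated spectral sequences with the stated $E^1$ and $E^\infty$ identifications. You supply considerably more detail than the paper's brief paragraph (in particular spelling out the algebraic pairing $E^r(A)\otimes E^r(B)\to E^r(A\otimes B)$), but the argument is the same; one small remark is that the isomorphism $\mathrm{gr}_n(A\otimes B)\cong\bigoplus_{p+r=n}\mathrm{gr}_pA\otimes\mathrm{gr}_rB$ is justified here not by right-exactness alone but because the skeletal filtrations on singular chains are degreewise split.
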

\begin{proof}
This follows from the well-known fact that a pairing of filtered complexes gives rise to a pairing of spectral sequences. Let $C_\ast$ denote the singular $k$-chain complex functor. Then one only needs to observe that the Eilenberg-Zilber map (which induces the $\times$-product)
\[
C_\ast(|X|)\otimes C_\ast(|Y|)\to C_\ast(|X|\times |Y|)
\]
is filtration preserving, if the singular chain complexes are given the filtrations induced from those of $|X|$, $|Y|$ and $|X|\times |Y|$, respectively, and the tensor product complex is given the obvious product filtration.
\end{proof}

The product simplicial space $X\times Y$ has $p$-simplices
\[
(X\times Y)_p=X_p\times Y_p\,.
\]
If $|X\times Y|$ is filtered by its skeleta and $|X|\times |Y|$ has the product filtration, then the homeomorphism $|X\times Y| \cong |X|\times |Y|$ induced by the two projections is not filtration preserving in general, but its inverse
\[
\zeta\co |X|\times |Y|\xrightarrow{\cong} |X\times Y|
\]
is always filtration preserving (see \cite[Lemma 11.15]{Mayiteratedloopspaces}). Therefore, it induces a map of spectral sequences
\[
E^r(\zeta)\co E^r(|X|\times |Y|) \to E^r(|X\times Y|)\, .
\]
By composing $E^r(\zeta)$ with the pairing of Lemma \ref{lem:pairing1}, we get a pairing of spectral sequences
\[
E^r(|X|)\otimes E^r(|Y|)\to E^r(|X\times Y|)\, .
\]
On $E^1$-pages this pairing is given by $E^1(\zeta)\circ \times$ and on $E^\infty$-pages it coincides with pairing induced by
\[
\zeta_\ast\circ \times\co H_{p+q}(|X|)\otimes H_{r+s}(|Y|) \to H_{p+q+r+s}(|X\times Y|)\, .
\]

Define a shuffle map
\[
\mathrm{sh} \co H_q(X_p)\otimes H_s(Y_r) \to  H_{q}(X_{p+r}) \otimes H_s(Y_{p+r}) 
\]
by
\[
c \otimes d \mapsto \sum_{(\mu,\nu)\in \mathrm{Sh}(p,r)} \mathrm{sgn}(\mu,\nu)\, s_\nu^X(c)\otimes  s_\mu^Y(d)\, .
\]
Here $\mathrm{Sh}(p,r)\subseteq \Sigma_{p+r}$ is the set of $(p,r)$-shuffles, $\mathrm{sgn}(\mu,\nu)$ is the sign of the shuffle permutation $(\mu,\nu)$, and $s_\nu^X=s_{\nu_r}^X \cdots s_{\nu_1}^X$ and $s_\mu^Y=s_{\mu_p}^Y \cdots s_{\mu_1}^Y$ are compositions of degeneracy maps in the simplicial $k$-modules $H_q(X)$ and $H_s(Y)$, respectively.

\begin{lemma} \label{lem:pairing2}
The pairing of spectral sequences
\[
E^r(|X|)\otimes E^r(|Y|)\to E^r(|X\times Y|)
\]
makes the following diagram commute
\[
\xymatrix{
H_q(X_p)\otimes H_s(Y_r) \ar[d]^-{f_{q,p}\otimes f_{s,r}} \ar[r]^{\times\circ \mathrm{sh}} & H_{q+s}((X\times Y)_{p+r}) \ar[d]^-{f_{q+s,p+r}} \\
E^1_{p,q} \otimes E^1_{r,s} \ar[r]^-{E^1(\zeta)\circ \times} & E^1_{p+q,r+s}.
}
\]
\end{lemma}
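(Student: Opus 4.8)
The plan is to expand both composites in the square using the explicit description of $f_{\ast,\ast}$ as ``cross product with the fundamental simplex, followed by the projection onto the skeletal filtration'', and to reduce the identity to two classical facts: the shuffle triangulation of a product of simplices and the defining relation of the geometric realisation. First I would rewrite the lower-left composite $E^1(\zeta)\circ\times\circ(f_{q,p}\otimes f_{s,r})$. Writing $f_{q,p}(c)=\pi^X_\ast(c\times[\Delta^p])$, with $[\Delta^p]\in H_p(\Delta^p,\partial\Delta^p)$ the fundamental class and $\pi^X\colon X_p\times\Delta^p\to F_p|X|$ the projection, naturality of the homology cross product applied to $\pi^X\times\pi^Y$, together with the homeomorphism interchanging the $\Delta^p$- and $Y_r$-coordinates (which contributes a Koszul sign), gives
\[
f_{q,p}(c)\times f_{s,r}(d)=\pm\,\Theta_\ast\bigl((c\times d)\times([\Delta^p]\times[\Delta^r])\bigr),
\]
where $\Theta\colon X_p\times Y_r\times\Delta^p\times\Delta^r\to F_p|X|\times F_r|Y|$ is the composite of the reordering homeomorphism with $\pi^X\times\pi^Y$. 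Pushing this into $F_{p+r}(|X|\times|Y|)$ along the inclusion of Lemma~\ref{lem:pairing1} and applying $\zeta$, which is filtration-preserving (cf.\ \cite[Lemma 11.15]{Mayiteratedloopspaces}), the problem reduces to computing $\zeta\circ\Theta$ on $\Delta^p\times\Delta^r$ and to knowing the class $[\Delta^p]\times[\Delta^r]$.

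Next I would invoke the shuffle triangulation of $\Delta^p\times\Delta^r$: its top-dimensional simplices are indexed by the $(p,r)$-shuffles $(\mu,\nu)$ and give affine embeddings $\lambda_{\mu,\nu}\colon\Delta^{p+r}\to\Delta^p\times\Delta^r$, subject to the classical Eilenberg--Zilber identity
\[
[\Delta^p]\times[\Delta^r]=\sum_{(\mu,\nu)\in\mathrm{Sh}(p,r)}\pm\,(\lambda_{\mu,\nu})_\ast[\Delta^{p+r}]
\]
in $H_{p+r}(\Delta^p\times\Delta^r,\partial(\Delta^p\times\Delta^r))$, the signs being the shuffle signs up to a fixed normalisation. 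The two coordinate projections of $\lambda_{\mu,\nu}$ are the geometric realisations of the monotone surjections $[p+r]\twoheadrightarrow[p]$ and $[p+r]\twoheadrightarrow[r]$ whose contravariant pullbacks are precisely the iterated degeneracy operators $s^X_\nu$ and $s^Y_\mu$ appearing in the definition of $\mathrm{sh}$. Feeding this into the defining relation $(\theta^\ast x,t)\sim(x,\theta_\ast t)$ of the realisation, the composite $\zeta\circ\Theta\circ(\mathrm{id}_{X_p\times Y_r}\times\lambda_{\mu,\nu})\colon X_p\times Y_r\times\Delta^{p+r}\to F_{p+r}|X\times Y|$ is identified with $\pi^{X\times Y}\circ\bigl((s^X_\nu\times s^Y_\mu)\times\mathrm{id}\bigr)$, where $\pi^{X\times Y}\colon (X\times Y)_{p+r}\times\Delta^{p+r}\to F_{p+r}|X\times Y|$ is the projection; and the proper faces of each $\lambda_{\mu,\nu}(\Delta^{p+r})$ map into $F_{p+r-1}$, so the whole computation descends to the relative $E^1$-groups. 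Substituting the displayed decomposition and using naturality of the cross product once more turns the lower-left composite into $\sum_{(\mu,\nu)}\pm\,\pi^{X\times Y}_\ast\bigl((s^X_\nu(c)\times s^Y_\mu(d))\times[\Delta^{p+r}]\bigr)$, which is exactly $f_{q+s,p+r}\bigl(\times\circ\mathrm{sh}(c\otimes d)\bigr)$, i.e.\ the upper-right composite.

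The step I expect to be the main obstacle is the sign bookkeeping: one must verify that the Koszul sign arising from interchanging the $\Delta^p$- and $Y_r$-coordinates, the orientation sign of the $(\mu,\nu)$-simplex in the prism triangulation, and any signs implicit in the conventions for the relative cross product on the $E^1$-pages combine to give \emph{precisely} the shuffle sign $\mathrm{sgn}(\mu,\nu)$ of the definition of $\mathrm{sh}$, with no residual factor. The cleanest way to secure this is to fix all sign conventions by matching them against the Eilenberg--MacLane/Eilenberg--Zilber chain map $C_\ast(X)\otimes C_\ast(Y)\to C_\ast(X\times Y)$, which by construction is the chain-level avatar of $\zeta_\ast\circ\times$ and carries exactly the sign used to define $\mathrm{sh}$; one then only needs its compatibility with the skeletal filtrations, which is immediate from its explicit formula in terms of degeneracies. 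With the conventions so aligned, the remaining verification is routine.
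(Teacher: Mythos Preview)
Your proposal is correct and follows the same underlying idea as the paper's proof: both reduce the commutativity to the shuffle triangulation of a product of standard simplices together with the defining relation $(\theta^\ast x,t)\sim(x,\theta_\ast t)$ of geometric realisation, and both identify $\zeta$ on the relevant cells via that relation.

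The packaging differs slightly. The paper works at the chain level, chasing actual singular simplices $\sigma\colon\Delta^q\to X_p$ and $\tau\colon\Delta^s\to Y_r$ through both corners; this produces \emph{triple} sums of shuffles (over $\mathrm{Sh}(q,s)$, $\mathrm{Sh}(p,r)$, and $\mathrm{Sh}(q+s,p+r)$) on each side, which are then matched bijectively using the explicit description of $\zeta$ from \cite[Theorem 11.5]{Mayiteratedloopspaces}. You instead work at the homology level, treating the cross product as a black box and invoking only the $(p,r)$-shuffle decomposition of $[\Delta^p]\times[\Delta^r]$; the other two shuffle sums are absorbed into the naturality of the homology cross product. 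Your route is more conceptual and shorter, at the cost of leaving the sign verification as a deferred step (which you honestly flag); the paper's chain-level chase makes the bijection of terms visible but is correspondingly more notationally involved. Either approach is adequate, and your identification $\zeta\circ\Theta\circ(\mathrm{id}\times\lambda_{\mu,\nu})=\pi^{X\times Y}\circ((s^X_\nu\times s^Y_\mu)\times\mathrm{id})$ is exactly the content of the paper's final sentence invoking the definition of $\zeta$.
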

\begin{proof}
This is a diagram chase. Recall that the $(q+s)$-simplices in the standard triangulation of $\Delta^q\times \Delta^s$ are represented by maps $l_{(\mu,\nu)}\co \Delta^{q+s}\to \Delta^q\times \Delta^s$ which correspond to shuffle permutations $(\mu,\nu)\in \mathrm{Sh}(q,s)$ (see \cite[Section 3.B]{Hatcher}). Chasing simplices $\sigma\co \Delta^q\to X_p$ and $\tau\co \Delta^s\to Y_r$ through the top right corner of the diagram, we obtain an alternating sum over shuffles $(\mu,\nu)\in \mathrm{Sh}(q,s)$, $(\mu',\nu')\in \mathrm{Sh}(p,r)$ and $(\mu'',\nu'')\in \mathrm{Sh}(q+s,p+r)$ of simplices
\[
\Delta^{q+s+p+r}\to F_{p+r}|X\times Y|
\]
of the form
\begin{equation} \label{eq:diagramchase1}
\pi\circ ((s^X_{\nu'}(\sigma)\times s_{\mu'}^Y(\tau))\circ l_{(\mu,\nu)}\times id_{p+r})\circ l_{(\mu'',\nu'')}\, .
\end{equation}
Chasing $\sigma$ and $\tau$ through the bottom left corner, we obtain an alternating sum over shuffles $(\alpha,\beta)\in \mathrm{Sh}(q,p)$, $(\alpha',\beta')\in \mathrm{Sh}(s,r)$ and $(\alpha'',\beta'')\in \mathrm{Sh}(q+p,s+r)$ of simplices of the form
\[
\zeta\circ (\pi\times \pi)\circ (\sigma\times id_p \times \tau \times id_r)\circ (l_{(\alpha,\beta)}\times l_{(\alpha',\beta')}) \circ l_{(\alpha'',\beta'')}\, .
\]
Now we observe that
\[
 (l_{(\alpha,\beta)}\times l_{(\alpha',\beta')}) \circ l_{(\alpha'',\beta'')}= (id\times T\times id)\circ (l_{(\mu,\nu)}\times l_{(\mu',\nu')}) \circ l_{(\mu'',\nu'')}
\]
for uniquely determined $(\mu,\nu)\in \mathrm{Sh}(q,s)$, $(\mu',\nu')\in \mathrm{Sh}(p,r)$ and $(\mu'',\nu'')\in \mathrm{Sh}(q+s,p+r)$ and the transposition $T\co \Delta^p\times \Delta^s\to \Delta^s\times \Delta^p$. Using this we can rewrite the previous sum as a sum over simplices of the form
\[
\zeta\circ(\pi\times \pi)\circ (id\times T\times id)  \circ ((\sigma\times \tau)\circ l_{(\mu,\nu)}\times (id_p\times id_r)\circ l_{(\mu',\nu')})\circ l_{(\mu'',\nu'')}\, .
\]
At this point we can insert the definition of $\zeta$ from \cite[Theorem 11.5]{Mayiteratedloopspaces} which transforms the given expression precisely into the one of (\ref{eq:diagramchase1}).
\end{proof}

From now on we let $X$ be a finite based simplicial set. Let $F$ be a $\Gamma$-space and denote by $F(X)$ the simplicial space obtained by evaluating $F$ levelwise. The zig-zag (\ref{eq:zigzaggammaspace}) induces a zig-zag of maps of spectral sequences
\begin{equation} \label{eq:zigzagss}
E^r(|F(X)\times F(X)|) \leftarrow E^r(|F(X\vee X)|) \rightarrow E^r(|F(X)|)
\end{equation}
as well as a diagram of chain complexes
\[
\xymatrix{
H_q(F(X)\times F(X)) \ar[d]^-{f_{q}} & \ar[l] H_q(F(X\vee X)) \ar[d]^-{f_{q}} \ar[r] & H_q(F(X)) \ar[d]^-{f_{q}} \\
E^1_{\ast,q}(|F(X)\times F(X)|) & \ar[l] E^1_{\ast,q}(|F(X\vee X)|) \ar[r] & E^1_{\ast,q}(|F(X)|).
}
\]
The diagram commutes, because the chain maps $f_q$ are natural with respect to maps of simplicial spaces.

If $F$ is special, then the top left horizontal arrow in the diagram becomes an isomorphism of chain complexes for every $q\geq 0$. If $F$ is also cofibrant, then $F(X)$ is Reedy cofibrant (see \cite[Proposition B.37]{Global}), and so is $F(X)\times F(X)$ by \cite[Proposition A.50]{Global}. Hence, the vertical maps in the diagram are quasi-isomorphisms. Thus, from the $E^2$-page onwards we can invert the leftward pointing arrow in (\ref{eq:zigzagss}). By combining with the pairing of Lemma \ref{lem:pairing2} we obtain a pairing of spectral sequences
\[
E^r(|F(X)|)\otimes E^r(|F(X)|) \to E^r(|F(X)|)
\]
provided $r\geq 2$. By construction, the product on $E^\infty$-pages coincides with the one induced by the product on $H_\ast(|F(X)|)$.

To finish the proof it remains to show that there is an isomorphism of bigraded algebras
\[
E^2(|F(X)|) \cong \HH^{X}_\ast (H_\ast(F(\langle 1\rangle));k)\, .
\]
We recall the definition of the shuffle product in Hochschild homology.

\begin{definition} \label{def:shuffle}
Let $A$ be an augmented dgca over $k$. The \emph{shuffle product} is the pairing of bigraded commutative bidifferential algebras
\[
\mathcal{L}(A,k)(X)\otimes \mathcal{L}(A,k)(X)\to \mathcal{L}(A,k)(X)
\]
defined as the composition
\[
 \mathcal{L}(A,k)(X_p)\otimes \mathcal{L}(A,k)(X_q)\xrightarrow{\mathrm{sh}} \mathcal{L}(A\otimes A,k\otimes k)(X_{p+q}) \xrightarrow{\textnormal{mult.}}  \mathcal{L}(A, k)(X_{p+q})\, ;
\]
the first map is defined for $a\in \mathcal{L}(A,k)(X_p)$ and $b\in \mathcal{L}(A,k)(X_q)$ by
\[
\mathrm{sh}(a,b):= \sum_{(\mu,\nu)\in \mathrm{Sh}(p,q)} \mathrm{sgn}(\mu,\nu) (s_{\nu_q}\cdots s_{\nu_1})(a) \otimes (s_{\mu_p}\cdots s_{\mu_1})(b)\,,
\]
where the $s_{\nu_i}$'s and the $s_{\mu_i}$'s are the degeneracy maps in the simplicial dgca $\mathcal{L}(A,k)(X)$, and the second map is induced by the multiplication in $A$ and $k$.
\end{definition}

By Lemma \ref{lem:functorhomology}, there is an isomorphism of simplicial graded commutative $k$-algebras
\begin{equation} \label{eq:sshochschildcomplex}
H_\ast(F(X_p))\cong \mathcal{L}(H_\ast(F(\langle 1 \rangle));k)(X_p)\, .
\end{equation}
By Lemma \ref{lem:pairing2}, the product on $E^2(|F(X)|)$ is induced, upon taking homology, by the composite of
\[
H_q(F(X_p))\otimes H_s(F(X_r))\xrightarrow{\times\circ \mathrm{sh}} H_{q+s}((F(X)\times F(X))_{p+r}) 
\]
with the multiplication
\[
H_{q+s}((F(X)\times F(X))_{p+r}) \xleftarrow{\cong} H_{q+s}(F(X\vee X)_{p+r})  \to H_{q+s}(F(X)_{p+r})\, .
\]
Under the isomorphism (\ref{eq:sshochschildcomplex}) it corresponds to the shuffle product on the Loday construction $\mathcal{L}(H_\ast(F(\langle 1\rangle));k)(X)$.

The naturality statement of Proposition \ref{prop:ssgeneral} is clear.

\bibliography{refs}
\bibliographystyle{plain}

\end{document}